\documentclass[opre,nonblindrev]{informs3a}

\OneAndAHalfSpacedXI 


\usepackage{endnotes}
\let\footnote=\endnote

%


\usepackage{natbib}
 \bibpunct[, ]{(}{)}{,}{a}{}{,}%

\usepackage{algorithm}
\usepackage{algorithmicx}
\usepackage{algpseudocode}
\algnewcommand{\algorithmicand}{\textbf{and }}
\algnewcommand{\algorithmicor}{\textbf{or }}
\algnewcommand{\OR}{\algorithmicor}
\algnewcommand{\AND}{\algorithmicand}
\makeatletter
\newenvironment{breakablealgorithm}
  {
  \begin{center}
     \refstepcounter{algorithm}
     \hrule height.8pt depth0pt \kern2pt
     \renewcommand{\caption}[2][\relax]{
      {\raggedright\textbf{\ALG@name~\thealgorithm} ##2\par}%
      \ifx\relax##1\relax 
         \addcontentsline{loa}{algorithm}{\protect\numberline{\thealgorithm}##2}%
      \else 
         \addcontentsline{loa}{algorithm}{\protect\numberline{\thealgorithm}##1}%
      \fi
      \kern2pt\hrule\kern2pt
     }
  }{
     \kern2pt\hrule\relax
  \end{center}
  }
\usepackage{makecell}

\usepackage{ulem}
\usepackage{booktabs}
\usepackage{multirow}
\usepackage{comment}
\usepackage{subcaption}
\usepackage{verbatimbox}

\newcommand{\lovasz}{Lov\'{a}sz~}

\usepackage{xcolor}

\definecolor{tumb}{RGB}{0,101,189}

\newcommand{\revise}[1]{{\color{black}#1}}
\newcommand{\revisee}[1]{{\color{black}#1}}

\setlength{\textfloatsep}{5pt}

\TheoremsNumberedThrough     
\ECRepeatTheorems

\EquationsNumberedThrough    

\MANUSCRIPTNO{} 

\begin{document}


\RUNAUTHOR{Zhang, Zheng, and Lavaei}

\RUNTITLE{Localization Methods for Convex Discrete Optimization via Simulation}

\TITLE{Stochastic Localization Methods for \\ Convex Discrete Optimization via Simulation}


\ARTICLEAUTHORS{%
\AUTHOR{Haixiang Zhang}
\AFF{Department of Mathematics, University of California, Berkeley, CA 94720, \EMAIL{haixiang\_zhang@berkeley.edu}} 
\AUTHOR{Zeyu Zheng}
\AFF{Department of Industrial Engineering and Operations Research, University of California, Berkeley, CA 94720, \EMAIL{zyzheng@berkeley.edu}}
\AUTHOR{Javad Lavaei}
\AFF{Department of Industrial Engineering and Operations Research, University of California, Berkeley, CA 94720,
\EMAIL{lavaei@berkeley.edu}}
} 

\ABSTRACT{%

We develop and analyze a set of new sequential simulation-optimization algorithms for large-scale multi-dimensional \textit{discrete optimization via simulation problems} with a convexity structure. The ``large-scale" notion refers to that the decision variable has a large number of values to choose from on each dimension. The proposed algorithms are targeted to identify a solution that is close to the optimal solution given any precision level with any given probability. 
To achieve this target, utilizing the convexity structure, our algorithm design does not need to scan all the choices of the decision variable, but instead sequentially draws a subset of choices of the decision variable and uses them to ``localize" potentially near-optimal solutions to an adaptively shrinking region.

To show the power of the localization operation, we first consider one-dimensional large-scale problems. We propose the shrinking uniform sampling algorithm, which is proved to achieve the target with an optimal expected simulation cost under an asymptotic criterion. For multi-dimensional problems, we combine the idea of localization with subgradient information and propose a framework to design stochastic cutting-plane methods and the dimension reduction algorithm, whose expected simulation cost 
have a low dependence on the scale and the dimension of the problems. 
The proposed algorithms do not require prior information about the Lipschitz constant of the objective function and the simulation costs are upper bounded by a value that is independent of the Lipschitz constant. Finally, we propose an adaptive algorithm to deal with the unknown noise variance case under the assumption that the randomness of the system is Gaussian. 
%
We implement the proposed algorithms on both synthetic and queueing simulation optimization problems, and demonstrate better performances compared to benchmark methods especially for large-scale examples.
}%


\KEYWORDS{Discrete optimization via simulation, convex optimization, shrinking uniform sampling algorithm, best achievable performance, stochastic cutting-plane methods, dimension reduction method} 
%

\maketitle

%


\section{Introduction}
In the areas of operations research and management science, many decision-making problems involve complex stochastic systems and discrete decision variables. In presence of stochastic uncertainties, many replications of stochastic simulation are often needed to accurately evaluate the objective function associated with a discrete decision variable. Such problems are sometimes referred to as \textit{Discrete Optimization via Simulation}, \textit{Discrete Simulation Optimization}, or \textit{Simulation/Stochastic Optimization with Integer Decision Variables} (see \cite*{nelson2010optimization,hong2015discrete,ragavan2021adaptive}).  For complex stochastic systems, even one replication of simulation can be time consuming or costly; see also \cite*{xu2010industrial,sun2014balancing,xu2016accelerated} for related discussions. When the decision space is large, it is often computationally impractical to run simulations for all choices of the decision variables, creating a challenge in finding the optimal or near-optimal choice of decision variables. To circumvent this challenge, problem structure such as convexity or local convexity of the objective function may need to be exploited to lower costs and improve the efficiency to find an optimal or near-optimal choice.

In this paper, we consider large-scale \revise{discrete optimization via simulation} problems with a convex objective function. The notion of ``large-scale" refers to a large number of choices for the discrete decision variable on each dimension. Optimization problems with such features naturally arise in many operations research and management science applications, including queueing networks, supply chain networks, sharing economy operations, financial markets, etc.; see \cite{shaked1988stochastic}, \cite{wolff2002convexity}, \cite{altman2003discrete},   \cite{singhvi2015predicting,jian2016simulation,freund2017minimizing} for example. Particularly in the area of supply chain management, a significant amount of models are proved to be discrete convex: lost-sales inventory systems with positive lead time \citep{zipkin2008structure}; serial inventory systems \citep{huh2010optimal}; single-stage inventory systems with positive order lead time \citep{pang2012note}; capacitated inventory systems with remanufacturing \citep{gong2013optimal}; more applications are discussed in \citet{chen2020discrete}. Overall in these papers, the authors consider various decision-making settings and prove convexity for commonly used objective functions in the corresponding settings. 
In these applications, the convexity is proved, but finer structure such as strong convexity often does not hold or is very difficult to prove. In addition, there may be many choices of decision variables whose associated objective values are close to the optimal objective value, and the gap between optimal and sub-optimal solutions is hard to measure or estimate a priori. For the algorithms designed in this work, we take the view that this gap information is not available and the algorithms are designed to work for arbitrarily small unknown gap. 

In this work, we develop provably efficient simulation-optimization algorithms that are guaranteed with an arbitrary probability $1-\delta$ to find a near-optimal choice of the decision variable that renders an objective value  $\epsilon$-close to the optimal solution, where $\epsilon$ is an arbitrary user-specified precision level. This criterion is called \textit{$(\epsilon,\delta)$-Probability of Good Selection} ($(\epsilon,\delta)$-PGS) in the simulation literature; see \cite{ma2017efficient} and \cite{hong2020review}. \revisee{Although the asymptotic regime $\delta \ll1$ is of more interest in many theoretical works, this work provides bounds on the simulation cost that hold for all $\epsilon\geq0$ and $\delta\in(0,1]$.} To quantify the computational cost for the proposed algorithms that are guaranteed to find PGS solutions, we take the view that the simulation cost is the dominant contributor to the computational cost; see also \cite{mahen17b}. The simulation cost of an algorithm is measured as the total number of simulation replications run at all possible decisions visited by the algorithm until it stops. When designing algorithms to solve large-scale discrete optimization via simulation problems, the dependence of the simulation cost on the problem size (or, the number of alternatives/solutions/systems in the area of ranking and selection) is crucial to understand; see also discussions in \cite{zhong2019knockout}.

Three most recent papers \cite{wang2020optimal}, \cite{Nelson2020} and \cite{zhang2020discrete} also discussed the use of the convexity structure in simulation. \cite{wang2020optimal} considered a discrete simulation optimization problem with a specific polynomial functional form for the objective function, and focus on how to strategically use gradient information to accelerate the selection of the best. Their focus and problem settings are different from ours. \cite{Nelson2020} utilized the convexity structure to select a feasible region that contains the optimal given existing simulation samples at different choices; see also \cite{eckman2021flat}. Because they do not consider an optimization problem and their goal is not to find an optimal or near-optimal solution, the focus of \cite{Nelson2020} is different from ours. For example, they do not provide simulation-optimization algorithms that can find an optimal or near-optimal decision, nor do they analyze simulation costs and their dependence on problem scale. On the other hand, the method and analysis provided by \cite{Nelson2020} and \cite{eckman2021flat} can serve effectively as a module to help solve other general simulation problems, such as multi-objective simulation optimization, which is not the focus of our work.

\cite{zhang2020discrete} proposed subgradient descent algorithms for problems with a high-dimension decision space. Roughly speaking, \revisee{their algorithms scale well to high-dimensional problems, but are computationally expensive for large-scale problems.}
However, in practice, many problem settings have a large scale but a low dimension or even a single dimension. For example, large delivery companies often need to decide the total number of trucks that should be recruited for operations in a self-contained region. A service system may needs to decide the total number of staff members needed to host a special event. In our work, our focus is on designing algorithms that work well for large-scale problems.
Furthermore, the subgradient descent algorithms in \cite{zhang2020discrete} require prior knowledge about the upper bounds on the Lipschitz constant $L$ and the variance $\sigma^2$. In addition, the simulation cost of the subgradient descent algorithm has a polynomial dependence on the upper bounds $L$ and $\sigma^2$. For many real-world discrete simulation via optimization problems, the Lipschitz constant and the variance are unknown and hard to estimate. As a result, both upper bounds are likely to be over-estimated, which will lead to worse simulation costs. \revisee{In this work, algorithms that do not rely on prior information about $L$ and $\sigma^2$ are proposed, which solve the aforementioned issues.}

\subsection{Contributions}
\label{sec:contributions}

The major methodology in algorithm design in this paper can be classified as \textit{stochastic localization methods}, in the sense that we ``localize" potentially near-optimal solutions in a subset and adaptively shrink the subset at each step. The design of algorithms relies on and addresses the challenge from the fact that the feasible set is a discrete set. Intuitively, if the feasible set has a finite number of discrete points, the subset of potentially near-optimal solutions can only be shrunk for a finite number of times, and the number of localization operations cannot exceed the size of the feasible set. The proposed algorithms generally do not rely on prior estimates of the Lipschitz constant and the variance. In addition, the simulation cost of achieving the PGS guarantee does not depend on the Lipschitz constant. We note that the dependence on the variance $\sigma^2$ is inevitable. \revisee{To avoid requiring prior knowledge about the variance in the Gaussian case,} after designing algorithms that do not require information about the Lipschitz constant, we propose in the last section before numerical experiments an adaptive scheme to address the challenge of unknown variances. \revisee{The idea of localization also appears in prior literature of discrete optimization via simulation, such as empirical stochastic branch-and-bound \citep{xu2013empirical}, nested partition \citep{shi2000nested} and COMPASS \citep{hong2006discrete,xu2010industrial}. However, existing works do not utilize the convexity structure and do not provide complexity analysis of the proposed algorithms. In contrast, we propose specially-designed algorithms for discrete convex objective functions and provide an estimate of the simulation costs.}


\revisee{To show the usefulness of the localization operation,} we first consider an important case of discrete simulation via optimization problems, where the decision space is the ``one-dimensional'' set $\{1,2,\ldots,N\}$. Here, 
$N$ is an arbitrary positive integer that represents the problem scale. Without the convexity structure, the problem setting is mathematically equivalent to the problem of \textit{ranking and selection}; see \cite{hong2020review} for a comprehensive review. In this work, the objective function is assumed to be discrete convex on the decision space, but no other structure information such as strong convexity or the knowledge of a minimal gap between the optimal and sub-optimal solutions is known. \revisee{Utilizing the idea of \textit{localization}, we overcome the shortcoming of the subgradient descent algorithm that its simulation cost has a quadratic dependence on the problem scale. We propose two localization algorithms.} As a natural generalization of the classical bi-section algorithm, we design the tri-section sampling (TS) algorithm to find a $(\epsilon,\delta)$-PGS solution. We prove that, when $\delta$ is small, $O(\log(N) \epsilon^{-2}\log(1/\delta))$ serves as an upper bound on the simulation cost for the TS algorithm for any one-dimensional convex problem, which represents the same logarithmic dependence on the scale as the bi-section algorithm. Note that when the convexity structure is not exploited, the optimal dependence on $N$ can be linear. We then design the shrinking uniform sampling (SUS) algorithm that beats the TS algorithm. The SUS algorithm is proved to enjoy the upper bound on the simulation cost as $O[ \epsilon^{-2}(\log(N) + \log(1/\delta))]$ when $\delta$ is small.
Using the asymptotic criterion (namely, $\delta\rightarrow0$ with other parameters fixed) in \citet{kaufmann2016complexity}, the SUS algorithm asymptotically achieves the optimal performance and, therefore, is the first algorithm to achieve a matching upper bound on simulation costs for ranking and selection problems with general convex structure. This theoretical superiority of the SUS algorithm is also verified in numerical experiments. We remark that our major contribution is the SUS algorithm rather than the TS algorithm, though the analysis provided for these two algorithms may be separately useful in broader settings.


Next, we turn to the settings of large-scale multi-dimensional problems with the ``$d$-dimensional'' discrete decision space $\{1,2,\ldots,N\} \times \{1,2,\ldots,N\} \times\ldots \times \{1,2,\ldots,N\}$. We note that the scale $N$ can easily be relaxed to be different in each dimension in our algorithm design (e.g., after linear constraints are applied on the decision space), but we unify the use of $N$ in each dimension in the analysis, so as to clearly demonstrate the impact of the scale $N$. A natural definition of discrete convexity on the multi-dimensional decision space is the $L^\natural$-convexity~\citep{murota2003discrete}, which guarantees that a local optimum is globally optimal; see \cite{dyer1977note,freund2017minimizing} for examples of $L^\natural$-convex functions. We observe that even though the TS algorithm and the SUS algorithm designed for one-dimensional problems can be extended to the multi-dimensional case, the dependence of their simulation cost on the dimension $d$ can be large, even up to an exponential order of dependence, which may prohibit their practical use in high-dimensional problems. This motivates us to consider alternative approaches to design stochastic localization algorithms that have a low dependence on the dimension $d$.

\revisee{In this work, we combine the idea of localization with the \textit{subgradient information} in the multi-dimensional case.
The subgradient information is constructed by taking simulation samples and plays a a crucial role in reducing the dependence of simulation cost on the dimension $d$.
The cutting-plane methods~\citep{vaidya1996new,bertsimas2004solving,lee2015faster,jiang2020improved} is based on a similar idea and is known to have lower order or no dependence on the Lipschitz constant. However, the cutting-plane methods are not robust to noise.} Therefore, we develop a novel framework to design stochastic cutting-plane (SCP) algorithms based on deterministic cutting-plane algorithms, with the goal of achieving the PGS guarantee. A novel stochastic separation oracle is designed and analyzed. 
A straightforward application of the proposed framework leads to SCP algorithms that have an $O(d^3)$ dependence on the dimension and a logarithmic dependence on $L$. 

Utilizing the discrete natural of the problem, we further develop the dimension reduction algorithm whose simulation cost is upper bounded by a constant that is independent of $L$ and has an $O(d^4)$ dependence on the dimension. This is the first algorithm for convex discrete optimization via simulation in the literature that does not require the knowledge about the Lipschitz constant $L$. In contrast, the subgradient-based search algorithms developed in \cite{zhang2020discrete} has a higher order dependence on $L$ and requires the knowledge about the Lipschitz constant, although it has a lower dependence ($O(d^2)$) on the dimension compared to the dimension reduction algorithm. Our developed SCP algorithms may particularly be preferable when the Lipschitz parameter $L$ for a given problem is large or hard to estimate. The idea of gradually reducing the problem dimension was proposed in parallel in \cite{jiang2020minimizing}, where the author made the algorithm more practical by reducing the number of arithmetic operations to be polynomial. We numerically verify that the dimension reduction algorithm has a better performance than the subgradient descent algorithm in \cite{zhang2020discrete} both on the synthetic and the queueing simulation optimization examples, especially for the large-scale case.

In terms of dependence on the scale $N$, we theoretically show that the subgradient descent algorithm and the SCP algorithms all present an $O(N^2)$ dependence on $N$ for their simulation costs. However, the SCP algorithms empirically perform better than the subgradient descent algorithm on examples where $N$ is large. On the other hand, the SUS algorithm, when extended to multi-dimensional problems, still present no dependence on $N$ under the asymptotic criterion \citep{kaufmann2016complexity}, but however incurs an exponential dependence on $d$. These analyses can assist practitioners to choose which algorithm to use depending on the knowledge or partial knowledge on $d$, $N$ and $L$ in the specific problems.


We remark that the design of localization algorithms that satisfy the PGS guarantee is the main focus of this paper. If, in addition, for scenarios when extra information on the indifference zone parameter $c>0$ is available, i.e., the gap between the objective function values of the best decision and the second best decision is known, our algorithms can naturally be extended to identify the exact best decision with high probability $1-\delta$. This criterion is referred to as \textit{Probability of Correct Selection with Indifference Zone} (PCS-IZ). We also provide performance analysis for our proposed algorithms in the appendix when they are used to achieve the PCS-IZ criterion.

Finally, we propose a novel algorithm that is able to adaptively estimate the variance of the randomness at each feasible decision in the case when the noise is Gaussian. The design of the algorithm is based on the property that the lower tail for $\chi^2$-random variables is sub-Gaussian \citep{wainwright2019high}. The adaptive algorithm is suitable for the case when an upper bound on the variance is hard to estimate and over-estimation is inevitable. In addition, the adaptive algorithm provides an approach to improve the simulation cost in the case when location-dependent upper bounds of the variance $\sigma_x^2$ is available for all feasible decision $x$. This is because the uniform upper bound $\sigma^2 = \max_x \sigma_x^2$ is in general attained by extreme choices of the decision variable and may be much larger than the variance of a large proportion of feasible decisions. In contrast to common two-stage procedures for the unknown variance case in ranking and selection literature, the proposed adaptive algorithm does not require simulating all choices of the decision variable (which requires $O(N^d)$ simulations) to get an upper bound on the variance. Moreover, using the novel algorithm, the simulation cost is at most increased by a constant factor compared to the known variance case.

The remainder of the paper is outlined as follows. Section \ref{sec:notation} summarizes the notation. Section \ref{sec:modelandframework} introduces the model, framework, optimality criterion, and simulation costs. Section \ref{sec:one-dim} discusses the algorithms and performance analysis developed for one-dimensional large-scale problems. Section \ref{sec:multi-dim} discusses the algorithms and performance analysis developed for multi-dimensional large-scale problems. Section \ref{sec:var} introduces the adaptive algorithm for estimation the variance in the Gaussian case. Section \ref{sec:numerical} provides numerical experiments to compare the proposed algorithms to benchmark methods. Section \ref{sec:cls} gives the concluding remarks.

\subsection{Notation} \label{sec:notation}

For a stochastic system labeled by its decision variable $x$, we denote $\xi_x$ as the random object associated with the decision variable. We write $\xi_{x,1},\xi_{x,2},\ldots,\xi_{x,n}$ as independent and identically distributed (i.i.d.) copies of $\xi_x$. The empirical mean of the $n$ independent evaluations for a decision variable labeled by $x$ is denoted as $\hat{F}_n(x) := \frac{1}{n}\sum_{j=1}^{n} F(x,\xi_{x,j})$. The indices set $[N]:=\{1,2,\dots,N\}$ is defined for every positive integer $N$. For any set $S$ and positive integer $d$, we define the product set $S^d$ as $\{ (x_1,x_2\dots,x_d): x_i\in\mathcal{S},i\in[d] \}$. 
For two vectors $x,y\in\mathbb{R}^d$, the maximum operation $x \vee y$, minimum operation $x \wedge y$, the ceiling function $\lceil x\rceil$ and the flooring function $\lfloor x\rfloor$ are all considered as component-wise operations.
To compare simulation costs, we omit terms that are independent of $d,N,\epsilon,\delta,c$ in $O(\cdot)$ and omit terms independent of $\delta$ in $\tilde{O}(\cdot)$. To be more concrete, the notation $f = O(g)$ means that there exist constants $c_1,c_2>0$ independent of $N,d,\epsilon,\delta,c$ such that $f \leq c_1 g + c_2$. Similarly, the notation $f = \tilde{O}(g)$ means that there exist constants $c_1>0$ independent of $N,d,\epsilon,\delta,c$ and constant $c_2>0$ independent of $\delta$ such that $f \leq c_1 g + c_2$. 
\revise{The notation $f = \Theta(g)$ means that there exist constants $c_1,c_2,c_3>0$ independent of $N,d,\epsilon,\delta,c$ such that $c_3 g\leq f \leq c_1 g + c_2$. The notation $f = \tilde\Theta(g)$ means that there exist constants $c_1,c_3>0$ independent of $N,d,\epsilon,\delta,c$ and constants $c_2,c_4>0$ independent of $\delta$ such that $c_3 g + c_4\leq f \leq c_1 g + c_2$.}

\section{Model and Framework} \label{sec:modelandframework}
We consider a complex stochastic system that involves discrete decision variables in a $d$-dimensional subspace $\mathcal{X} = [N_1]\times [N_2]\times\cdots\times [N_d]$ in which the $N_i$'s are positive integers. The objective function $f(x)$ for $x\in\mathcal{X}$ is given by
\begin{equation*}
    f(x) := \mathbb{E}[F(x, \xi_x)], 
\end{equation*}
in which $\xi_x$ is a random object belongs to probability space $(\mathsf{Y}, \mathcal{B}_{\mathsf{Y}})$ and $F:\mathcal{X} \times \mathsf{Y}\rightarrow \mathbb{R}$ is a measurable function. Specifically, the function $F$ captures the full operations logic in the stochastic system and measures the performance of the system. \revisee{For example, in a queueing system, $\xi_x$ is the arrival times and the service times of customers, and $F(\cdot,\xi_x)$ is the average waiting time of all customers under the situation described by $\xi_x$.} We consider scenarios when the objective function $f(x)$ is not in closed-form and needs to be evaluated by averaging over simulation replications of $F(x,\xi_x)$. The random objects $\xi_x$'s can be different for different choices of decision variables.
In this work, we focus on identifying the optimal decision, i.e., finding the decision that has the minimal objective value:
\begin{equation} \label{eqn:obj}
    \min_{x\in\mathcal{X}}~f(x).
\end{equation}
We assume that the objective function has a convex structure.
\begin{assumption}\label{asp:1}
The objective function $f(x)$ is a convex function on the discrete set $\mathcal{X}$.
\end{assumption}
For the exact definition of discrete convexity, we describe in details in Section \ref{sec:one-dim} for the one-dimensional cases and Section \ref{sec:multi-dim} for the multi-dimensional cases. 

\subsection{Optimality Guarantees and Classes of Algorithms}

Our general goal is to design algorithms that guarantee the selection of a good decision that yields a close-to-optimal performance with high probability. Formally, this criterion is defined as \textit{Probability of Good Selection}.
\begin{itemize}
     \item \textbf{$(\epsilon,\delta)$-Probability of good selection (PGS).} The solution $x$ returned by an algorithm has an objective value at most $\epsilon$ larger than the optimal objective value with probability at least $1-\delta$.
\end{itemize}

This PGS guarantee is also referred to as the probably approximately correct selection (PAC) guarantee in the literature~\citep{even2002pac,kaufmann2016complexity,ma2017efficient}. While our main focus is to design algorithms that satisfy the PGS optimality guarantee, we also consider the optimality guarantee of \textit{Probability of Correct Selection with Indifference Zone} for comparison.
\begin{itemize}
    \item \textbf{Probability of correct selection with indifference zone (PCS-IZ).} (See \cite{hong2020review}) The problem is assumed to have a unique solution that renders the optimal objective value. The optimal objective value is assumed to be at least $c>0$ smaller than the objective values at sub-optimal choices of decisions. The gap width $c$ is called the \textbf{indifference zone parameter} in~\citet{bechhofer1954single}. The PCS-IZ guarantee requires that the solution returned by an algorithm be the optimal solution with probability at least $1-\delta$.
\end{itemize}

In general, by choosing $\epsilon < c$, algorithms satisfying the PGS guarantee can be readily applied to satisfy the PCS-IZ guarantee. However, algorithms satisfying the PCS-IZ guarantee may fail to satisfy the PGS guarantee; see \cite{eckman2018fixed} and \cite{hong2020review}. The failing probability $\delta$ in either PGS or PCS-IZ is usually chosen to be small to ensure a high probability result. \revise{Hence, we assume henceforth that $\delta$ is small enough and focus on the asymptotic expected simulation cost.}
In addition, we assume that the probability distribution for the stochastic simulation output $F(x,\xi_x)$ is sub-Gaussian.
\begin{assumption}\label{asp:3}
The distribution of $F(x,\xi_x) - f(x)$ is zero-mean sub-Gaussian with the known upper bound $\sigma^2$ on the parameter for any $x\in\mathcal{X}$.
\end{assumption}
We note that a special case of Assumption \ref{asp:3} is when the distribution follows the Gaussian distribution. In that case, the parameter $\sigma^2$ can be chosen as the upper bound on the variance of the distribution. For more general distributions with a finite variance, the mean estimator in \citet{lee2020optimal} can be used in place of the empirical mean estimator and the results in this work can be directly generalized. 
We assume that Assumption \ref{asp:3} holds in the remainder of the paper except Section \ref{sec:var}, where we propose a novel algorithm to adaptively estimate the variance $\sigma^2$ in the Gaussian case.
The triad of the decision space $\mathcal{X}$, the space of randomness $(\mathsf{Y},\mathcal{B}_\mathsf{Y})$ and the function $F(\cdot,\cdot)$ is called the \textbf{model} of problem \eqref{eqn:obj}. We define the set of all models for which function $f(\cdot)$ is convex on set $\mathcal{X}$ as $\mathcal{MC}(\mathcal{X})$, or simply $\mathcal{MC}$. The set $\mathcal{MC}_c$ includes all convex models with the indifference zone parameter $c$. Next, we define the class of simulation-optimization algorithms that are proved to find solutions satisfying certain optimality guarantee for a given set of models. 
\begin{definition}
Given an optimality guarantee $\mathcal{O}$ and a set of models $\mathcal{M}$, a simulation-optimization algorithm is called an $(\mathcal{O},\mathcal{M})$-algorithm if, for any model $M\in\mathcal{M}$, the algorithm returns a solution to $M$ that satisfies the optimality guarantee $\mathcal{O}$. 
\end{definition}
For example, the class of $(\text{PGS},\mathcal{MC})$-algorithms guarantees a PGS solution for any convex model. 

\subsection{Simulation Costs}

For \revise{optimization via simulation problems}, the view that the simulation cost of generating replications of $F(x,\xi_x)$ is the dominant contributor to the computational cost is widely hold; see \citet{luo2017fully,nietal17,ma2017efficient,mahen17b}. Therefore, for the purpose of comparing different simulation-optimization algorithms that satisfy certain optimality guarantee, the performance of each algorithm is measured by the total number of evaluations of $F(x,\xi_x)$ at different points $x$. The number of evaluations during an optimization process is called the \textit{simulation cost}.
Besides providing a measure to compare different algorithms, simulation costs can provide insights into how the computational cost depends on the scale and dimension of the problem. Moreover, understanding the simulation costs can provide information to facilitate the setup of parallel procedures for large-scale problems. 
%
%
The main focus of this paper is to develop provably efficient simulation-optimization algorithms for a certain optimality guarantee and provide an upper bound on the simulation cost to achieve that guarantee. We note that our proposed algorithms do not require additional structures of the selection problem in addition to convexity.
%
%
Now, we give the rigorous definition of the expected simulation cost for a given set of models $\mathcal{M}$ and given optimality guarantee $\mathcal{O}$. 
\begin{definition}
Given the optimality guarantee $\mathcal{O}$ and a set of models $\mathcal{M}$, the \textbf{expected simulation cost} is defined as
\[ T(\mathcal{O},\mathcal{M}) := \inf_{A~\text{is }(\mathcal{O},\mathcal{M})} ~ \sup_{M\in\mathcal{M}}  \mathbb{E}\left[\tau_A\right], \]
where $\tau_A$ is a random variable that represents the number of simulation evaluations of $F(\cdot,\cdot)$ for each implementation of algorithm $A$.
\end{definition}
The notion of simulation cost in this paper is largely focused on
\[  \quad T(\epsilon,\delta,\mathcal{MC}) := T((\epsilon,\delta)\text{-}PGS,\mathcal{MC}),\quad T(\delta,\mathcal{MC}_c) := T((c,\delta)\text{-}PCS\text{-}IZ,\mathcal{MC}_c). \] 
%
We mention that the upper bounds derived in this paper also hold almost surely, while the lower bounds only hold in expectation.

To better present the dependence of the expected simulation cost on the scale and dimension of the problem, we assume that $N_1=N_2=\cdots=N_d$.
\begin{assumption}\label{asp:4}
The feasible set of decision variables is $\mathcal{X} = [N]^d$, where $N\geq2$ and $d\geq1$. 
\end{assumption}
%
With Assumption \ref{asp:4} in hand, we will present the dependence of the expected simulation cost on $N$ and $d$. We note that the results in this work can be naturally extended to the case when each dimension has a different number of feasible choices of decision variables. \revise{Furthermore, if the objective function $f$ is defined on a $L^\natural$-convex set (i.e., the indicator function of the set is a $L^\natural$-convex function, which we will define later), the algorithms proposed in this paper can be directly extended with small modifications. A typical example of a $L^\natural$-convex set is the capacity-constrained set
\[ \left\{ (x_1,\dots,x_d)~\Big|~ x_i\in[N],~\forall i\in[d],~ {\textstyle\sum}_i x_i \leq c \right\} \]
under a linear transform, where $c > 0$ is the capacity constraint; see Section \ref{sec:numerical} for more details.
}

\section{Simulation-optimization Algorithms and Complexity Analysis: One-dimensional Case}
\label{sec:one-dim}

We first consider a special class of optimization via simulation problems where the dimension of the decision variable is one, but there are a large number of choices of decision variable. This class of one-dimensional problems, despite of the less generality compared to multi-dimensional large-scale problems, have applications when the one-dimensional decision variable is a choice of overall resource level. For example, large delivery companies often need to decide the total number of trucks that should be recruited for operations in a self-contained region. A service system may needs to decide the total number of staff members needed to host a special event. Such decisions often involve a trade-off between service satisfaction and resource costs. The convexity in the objective function often comes from the marginal decay of contribution to service satisfaction as the resource level increase; see the optimal allocation example and Figure \ref{fig:landscape} in Section \ref{sec:numerical} for more details.

In the one-dimensional case, the feasible set is $\mathcal{X}=[N]=\{1,2,\ldots,N\}$. This setting is mathematically equivalent to the problem of ranking and selection with convexity structure. The discrete convexity for a function $f$ can be defined similarly to the ordinary continuous convexity through the discrete midpoint convexity property, namely,
\[ f(x+1) + f(x-1) \geq 2 f(x),\quad\forall x \in \{2,\dots,N-1\}. \]
If the function $f(x)$ is convex on $\mathcal{X}$, it has a convex linear interpolation on the continuous interval $[1,N]$, defined as
\begin{align}\label{eqn:linear-int} \tilde{f}(x) := [f(x_0+1)-f(x_0)] \cdot(x-x_0) + f(x_0),\quad\forall x\in[x_0,x_{0}+1],~ x_0\in[N-1]. \end{align}
The definition of discrete convexity in a multi-dimensional decision space is called the $L^\natural$-convexity~\citep{murota2003discrete}. We defer the discussion of $L^\natural$-convex functions for the multi-dimensional case to Section \ref{sec:multi-dim}.

In this section, we propose simulation-optimization algorithms that are guaranteed to find solutions that satisfy the PGS guarantee, provided that the objective function has a convex structure. 
For every developed simulation-optimization algorithm, we provide an upper bound on the expected simulation cost to achieve the PGS guarantee. 
We also provide a lower bound on the expected simulation cost that reflects the best achievable performance for any algorithm. Under the asymptotic criterion in \citet{kaufmann2016complexity}, one of our proposed algorithms can attain the best achievable asymptotic performance.

In contrast to the multi-dimensional case, where the subgradient descent algorithm achieves satisfying performance \citep{zhang2020discrete}, the subgradient descent algorithm is not efficient for large-scale one-dimensional problems. This is because of the $O(N^2)$ dependence in the simulation cost. In addition, the subgradient descent algorithm relies on the Lipschitz constant of the objective function, which is shown to be unnecessary for discrete problems in this section. Utilizing the localization operation, the algorithms proposed in this section do not have the aforementioned issues. Therefore, the algorithms in this section provide better alternatives to the subgradient descent algorithm for one-dimensional problems. The analysis of the one-dimensional case also shows the limitation of subgradient-based search methods and provides a hint on how to improve algorithms for multi-dimensional problems.

\subsection{Tri-section Sampling Algorithm and Upper Bound on Expected Simulation Cost}\label{sec:adaptive}

We first propose the tri-section sampling algorithm for the PGS guarantee. The idea of the tri-section sampling algorithm is from the classical bi-section method and the golden section method. 
%
A similar tri-section sampling algorithm is proposed in \cite{agarwal2011stochastic} for stochastic continuous convex optimization, which controls the regret instead of the objective value.
However, 
their algorithm does not utilize the prior information that the optimal solution is an integral point and thus the simulation cost has a polynomial dependence on the Lipschitz constant. In addition, although an algorithm that minimizes the regret can be used to minimize the objective function value, the resulting simulation cost may be larger than that of specialized optimization algorithms and has an inferior dependence on the dimension $d$ in the multi-dimensional case.
%
The pseudo-code of the proposed tri-section sampling algorithm is listed in Algorithm \ref{alg:one-dim}. 
%
\bigskip
\begin{breakablealgorithm}
\caption{Tri-section sampling algorithm for the PGS guarantee}
\label{alg:one-dim}
\begin{algorithmic}[1]
\Require{Model $\mathcal{X}=[N], (\mathsf{Y},\mathcal{B}_\mathsf{Y}),F(x,\xi_x)$, optimality guarantee parameters $\epsilon,\delta$.}
\Ensure{An $(\epsilon,\delta)$-PGS solution $x^*$ to problem \eqref{eqn:obj}.}
\State Set upper and lower bounds of the current interval $\revise{x_L}\leftarrow 1,\revise{x_U}\leftarrow N$. 
\State Set maximal number of comparisons $T_{max}\leftarrow\log_{1.5}(N) + 2$.
\While{$\revise{x_U} - \revise{x_L} > 2$} \Comment{Iterate until there are at most $3$ points.}
    \State Compute $3$-quantiles of the interval $\revise{q_{1/3}}\leftarrow \lfloor 2\revise{x_L}/3+\revise{x_U}/3\rfloor$ and $\revise{q_{2/3}}\leftarrow \lceil \revise{x_L}/3+2\revise{x_U}/3\rceil$. 
    \Repeat{ simulate an independent copy of $F(\revise{q_{1/3}},\xi_{1/3})$ and an independent copy of  $F(\revise{q_{2/3}},\xi_{2/3})$  } 
    \State Compute the empirical mean (using all of the simulated samples) $\hat{F}_n(\revise{q_{1/3}}),\hat{F}_n(\revise{q_{2/3}})$.
    \State Compute $1-\delta/(2T_{max})$ confidence intervals at each quantile:
    \[ \left[\hat{F}_n(\revise{q_{1/3}})-h_{1/3},\hat{F}_n(\revise{q_{1/3}})+h_{1/3}\right]\quad \text{ and }\quad \left[\hat{F}_n(\revise{q_{2/3}})-h_{2/3},\hat{F}_n(\revise{q_{2/3}})+h_{2/3}\right], \]
    \hspace{3.5em}where $h_{1/3}$ and $h_{2/3}$ are the half-widths of confidence intervals.
    \Statex \Comment{A possible choice of $h_{1/3}$ and $h_{2/3}$ is $h(n,\sigma,\alpha)$ defined below.}
    \Until{the first time that one of the following three conditions holds:
    \begin{align*}  (i)\quad& \hat{F}_n(\revise{q_{1/3}})-h_{1/3}\geq\hat{F}_n(\revise{q_{2/3}})+h_{2/3},\\
                    (ii)\quad& \hat{F}_n(\revise{q_{1/3}})+h_{1/3}\leq\hat{F}_n(\revise{q_{2/3}})-h_{2/3},\\
                    (iii)\quad& h_{1/3} \leq \epsilon/8\quad\text{ and }\quad h_{2/3} \leq \epsilon/8. \end{align*}}
    \If{$\hat{F}_n(\revise{q_{1/3}})-h_{1/3}\geq\hat{F}_n(\revise{q_{2/3}})+h_{2/3}$}
        \State Update $\revise{x_L}\leftarrow \revise{q_{1/3}}$.
    \ElsIf{$\hat{F}_n(\revise{q_{1/3}})+h_{1/3}\leq\hat{F}_n(\revise{q_{2/3}})-h_{2/3}$}
        \State Update $\revise{x_U}\leftarrow \revise{q_{2/3}}$.
    \ElsIf{$h_{1/3} \leq \epsilon/8 $ and $h_{2/3} \leq \epsilon/8$}
        \State Update $\revise{x_L}\leftarrow \revise{q_{1/3}}$ and $\revise{x_U}\leftarrow \revise{q_{2/3}}$.
    \EndIf
\EndWhile
\State Simulate $F(x,\xi_x)$ for $x\in\{\revise{x_L},\dots,\revise{x_U}\}$ until the $1-\delta/(2T_{max})$ confidence half-widths are smaller than $\epsilon/2$.
\Statex\Comment{Now $\revise{x_U}-\revise{x_L}\leq 2$.}
\State Return the point in $\{\revise{x_L},\dots,\revise{x_U}\}$ with the minimal empirical mean.
\end{algorithmic}
\end{breakablealgorithm}
\bigskip
In the procedure of Algorithm \ref{alg:one-dim}, one step is to compute confidence intervals that satisfy certain confidence guarantees. We now provide one feasible approach to construct such confidence intervals, which is based on Hoeffding's inequality for sub-Gaussian random variables. Define 
\begin{equation*}
    h(n,\sigma,\alpha) := \sqrt{\frac{2\sigma^2}{n} \cdot \log(2/\alpha)}.
\end{equation*}
Recall that $\sigma^2$ is the upper bound on the sub-Gaussian parameters of all choices of decision variables. With this function $h(\cdot)$ in hand, whenever $n$ independent simulations of the decision $x$ are available, one can construct a $(1-\alpha)$ confidence interval for $f(x)$ as
\begin{equation*}
     \left[\hat{F}_n(x)-h(n,\sigma,\alpha),\hat{F}_n(x)+h(n,\sigma,\alpha)\right].
\end{equation*}
If the variance $\sigma_x^2$ of a single choice of decision variable $x$ is known, the confidence interval may be sharpened by replacing $\sigma$ with $\sigma_x$; see Section \ref{sec:var}. 
\revise{We note that the analysis in this work can be generalized to more general distributions, such as the sub-exponential distributions, by replacing $h(n,\sigma,\alpha)$ with other concentration bounds.}

Intuitively, the algorithm iteratively shrinks the size of the set containing a potentially near-optimal choice of decision variables. Specifically, the algorithm shrinks the length of the current interval by at least $1/3$ for each iteration. Thus, the total number of iterations is at most $O(\log_{1.5}(N))$ to shrink the set until there are at most $3$ points. Then, the algorithm solves a sub-problem with at most $3$ points. We can prove that Algorithm \ref{alg:one-dim} achieves the PGS guarantee for any given convex problem without knowing further structural information, i.e., Algorithm \ref{alg:one-dim} is a $[(\epsilon,\delta)$-PGS$,\mathcal{MC}]$-algorithm. By estimating the simulation cost of the algorithm, an upper bound on the expected simulation cost to achieve the PGS guarantee follows.

\revise{
\begin{theorem}\label{thm:one-dim}
Suppose that Assumptions \ref{asp:1}-\ref{asp:4} hold. Algorithm \ref{alg:one-dim} is a $[(\epsilon,\delta)$-PGS$,\mathcal{MC}]$-algorithm. Furthermore, we have
\[ T(\epsilon,\delta,\mathcal{MC}) = O\left[ \frac{\log(N)}{\epsilon^2}\log\left( \frac{\log(N)}{\delta} \right) + \log(N) \right] = \tilde{O}\left[ \frac{\log(N)}{\epsilon^2}\log\left( \frac{1}{\delta} \right) \right]. \]
%
\end{theorem}
}
\begin{proof}{Proof of Theorem \ref{thm:one-dim}.}
The proof is provided in \ref{ec:one-dim}.
\hfill\Halmos\end{proof}
%

We provide an explanation on the additional $\log(N)$ term. 
We note that in practice, the number of simulation samples taken in each iteration must be an integer, while the simulation cost is treated as a real number in our complexity analysis. Hence, the practical simulation cost of each iteration should be the smallest integer larger than the theoretical simulation cost, which introduces an extra $O(1)$ term. Then, the total expected simulation cost of Algorithm \ref{alg:one-dim} should contain an extra $O(\log(N))$ term, which is not related to $\delta$ and is relatively small compared to the main term when $\delta$ is small.
%
\begin{remark}
The term in the $\tilde{O}(\cdot)$ notation reflects the asymptotic simulation cost when $\delta\rightarrow 0$. The asymptotic simulation cost is commonly used in multi-armed bandits literature to compare the computational complexities of different algorithms \citep{lai1985asymptotically,burnetas1996optimal,karnin2013almost,jamieson2014lil,chen2016pure,kaufmann2016complexity}. In practice, the failing probability $\delta$ is usually not small enough to enter the asymptotic regime and thus the simulation cost of algorithms may deviate from the asymptotic simulation cost. Therefore, we provide both the non-asymptotic and the asymptotic simulation costs for all algorithms.
\end{remark}

\subsection{Shrinking Uniform Sampling Algorithm and Upper Bound on Expected Simulation Cost} 
\label{sec:one-dim-uniform}

We have shown that the expected simulation cost of tri-section sampling algorithm for the PGS guarantee has a $\log(N)$ dependence on $N$. Then, one may naturally ask: is there any algorithm for the PGS guarantee whose simulation cost has a better dependence on $N$? The answer is affirmative. In this subsection, the shrinking uniform sampling algorithm for the PGS guarantee is proposed, which is proven to have a simulation cost as $O[ \epsilon^{-2}(\log(N) + \log(1/\delta))]$, which grows as $\epsilon^{-2}\log(1/\delta)$ in the asymptotic regime $\delta \rightarrow 0$. Similarly, utilizing the idea of localization, the shrinking uniform sampling algorithm maintains a set of active points and shrinks the set in each iteration until there are at most $2$ points. However, instead of only sampling at $3$-quantiles points of the current interval, the shrinking uniform sampling algorithm samples all points in the current active set but with much fewer simulations. We give the pseudo-code in Algorithm \ref{alg:one-dim-uni}.
\bigskip
\begin{breakablealgorithm}
\caption{Shrinking uniform sampling algorithm for the PGS guarantee}
\label{alg:one-dim-uni}
\begin{algorithmic}[1]
\Require{Model $\mathcal{X}=[N],(\mathsf{Y},\mathcal{B}_\mathsf{Y}),F(x,\xi_x)$, optimality guarantee parameters $\epsilon,\delta$.}
\Ensure{An $(\epsilon,\delta)$-PGS solution $x^*$ to problem \eqref{eqn:obj}.}
\State Set the active set $\mathcal{S}\leftarrow \mathcal{X}$.
\State Set the step size $s\leftarrow1$, maximal number of comparisons $T_{max}\leftarrow N$.
\While{the size of $\mathcal{S}$ is at least $3$} \Comment{Iterate until $\mathcal{S}$ has at most $2$ points.}
    \Repeat{ simulate an independent copy of $F(x,\xi_x)$ for all $x\in \mathcal{S}$ }
        \State Compute the empirical mean (using all of the available simulated samples) $\hat{F}_n(x)$ for all $x\in\mathcal{S}$.
        \State Compute $1-\delta/(2T_{max})$ confidence intervals at each point:
        \[ [\hat{F}_{n_x}(x) - h_{x},\hat{F}_{n_x}(x) + h_{x}],\quad \forall x\in S.\]
        \If{the confidence half-width $h_x\leq |\mathcal{S}|\cdot\epsilon/80$ for some $x\in S$}
            \State Stop sampling point $x$.
        \EndIf
    \Until{at least one of the following conditions holds
    \begin{align*}
        (i)\quad&\exists x,y\in\mathcal{S}\quad\mathrm{s.t.}\quad \hat{F}_{n_x}(x) + h_{x} \leq \hat{F}_{n_y}(y) - h_{y}\\
        (ii)\quad&\forall x\in\mathcal{S} \hspace{4.5em} h_x \leq |\mathcal{S}|\cdot \epsilon/80.
    \end{align*}
    }
    \If{$\hat{F}_{n_x}(x) + h_{x} \leq \hat{F}_{n_y}(y) - h_{y}$ for some $x,y\in\mathcal{S}$} \Comment{\textbf{Type-I Operation}}
        \If{$x < y$}
            \State Remove all points $z\in\mathcal{S}$ with the property $z\geq y$ from $\mathcal{S}$.
        \Else
            \State Remove all points $z\in\mathcal{S}$ with the property $z\leq y$ from $\mathcal{S}$.
        \EndIf
    \ElsIf{$h_x \leq |\mathcal{S}|\cdot \epsilon/80$ for all $x\in\mathcal{S}$}\Comment{\textbf{Type-II Operation}} 
        \State Update the step size $s\leftarrow 2s$. 
        \State Update $\mathcal{S}\leftarrow\{x_{min},x_{min}+s,\dots,x_{min}+ks \}$, where $x_{min}=\min_{x\in \mathcal{S}}~x$ and $k = \lceil|\mathcal{S}|/2\rceil-1$.
    \EndIf
\EndWhile \Comment{Now $\mathcal{S}$ has at most $2$ points.}
\State Simulate $F(x,\xi_x)$ for $x\in\mathcal{S}$ until the $1-\delta/(2T_{max})$ confidence half-widths are smaller than $\epsilon/4$.
\State Return the point in $\mathcal{S}$ with minimal empirical mean.
\end{algorithmic}
\end{breakablealgorithm}
\bigskip
There are two kinds of shrinkage operations in Algorithm \ref{alg:one-dim-uni}, which we denote as Type-I and Type-II Operations. Intuitively, Type-I Operations are implemented when we can compare and differentiate the function values of two points with high probability, and Type-II Operations are implemented when all points have similar function values. In the latter case, we prove that there exists a neighboring point to the optimum that has a function value at most $\epsilon/2$ larger than the optimum. Hence, we can discard every other point in $\mathcal{S}$ (the set in the algorithm that contains a potential good selection) with at least one $\epsilon/2$-optimal point remained in the active set. We give a rough estimate to the expected simulation cost of Algorithm \ref{alg:one-dim-uni}. We assign an order to points in $\mathcal{X}$ by the time they are discarded from $\mathcal{S}$. Points discarded in the same iteration are ordered randomly. Then, for the last $k$-th discarded point $x_k$, there are at least $k$ points in $\mathcal{S}$ when $x_k$ is discarded. By the second termination condition in \revise{line 17}, the confidence half-width at $x_k$ is at least $k\epsilon/80$. If the Hoeffding bound is used, simulating $\tilde{O}(\epsilon^{-2}k^{-2}\log(1/\delta))$ times is enough to achieve the confidence half-width. Recalling the fact that $\sum_k~k^{-2} < \pi^2/6 <\infty$, if we sum the simulation cost over $k\in[N]$, the total expected simulation cost is bounded by $\tilde{O}(\epsilon^{-2}\log(1/\delta))$ and is independent of $N$. The following theorem proves that Algorithm \ref{alg:one-dim-uni} indeed achieves the PGS guarantee for any convex problem
and provides a rigorous upper bound on the expected simulation cost $T(\epsilon,\delta,\mathcal{MC})$.

\begin{theorem}\label{thm:one-dim-uni}
Suppose that Assumptions \ref{asp:1}-\ref{asp:4} hold. Algorithm \ref{alg:one-dim-uni} is a  $[(\epsilon,\delta)\text{-PGS},\mathcal{MC})]$-algorithm. Furthermore, we have
\[ T(\epsilon,\delta,\mathcal{MC}) = O\left[ \frac{1}{\epsilon^{2}}\log\left(\frac{N}{\delta}\right) + N \right] = \tilde{O}\left[ \frac{1}{\epsilon^{2}}\log\left(\frac{1}{\delta}\right) \right]. \]
%
\end{theorem}
\begin{proof}{Proof of Theorem \ref{thm:one-dim-uni}.}
The proof is provided in \ref{ec:one-dim-uni}.
\hfill\Halmos\end{proof}

If we consider the asymptotic regime $\delta\ll 1$ (which is considered in \citet{kaufmann2016complexity}), the expected simulation cost of the shrinking uniform sampling algorithm grows as $\epsilon^{-2}\log(1/\delta)$. This independence is asymptotic and holds in the sense that the required failing probability $\delta$ tends to be very small. When $\delta$ is moderately large, the cost can depend on $N$. We demonstrate in the numerical experiments this asymptotic independence.

\subsection{Lower Bound on Expected Simulation Cost}

In this subsection, we derive lower bounds on the expected simulation costs for all of the simulation-optimization algorithms that satisfy certain optimality guarantee for general convex problems. 
The lower bounds show the fundamental limit behind the simulation-optimization algorithms for general selection problems with a convex structure. In the one-dimensional case, the derived lower bound for the PCS-IZ guarantee also holds for the PGS guarantee by choosing $c=2\epsilon$. By comparing those lower bound with the upper bounds established for specific simulation-optimization algorithms, we can conclude that the shrinking uniform sampling algorithm is optimal up to a constant factor. In the proof for the lower bound result, we construct two convex models that have similar distributions at each point but have distinct optimal solutions. Then, the information-theoretical inequality in~\citet{kaufmann2016complexity} can be used to provide a lower bound on the simulation costs for all algorithms.

We first present the results in~\citet{kaufmann2016complexity} for completeness. Given a simulation-optimization algorithm and a model $M$, we define random variable $N_x(\tau)$ to be the number of times that $F(x,\xi_x)$ is sampled when the algorithm terminates, where $\tau$ is the stopping time of the algorithm. Then, it follows from the definition that
\[ \mathbb{E}_{M}[\tau] = \sum_{x\in\mathcal{X}}\mathbb{E}_{M}\left[N_x(\tau)\right], \]
where $\mathbb{E}_M$ is the expectation when the model $M$ is given. Similarly, we can define $\mathbb{P}_{M}$ as the probability when the model $M$ is given. \revise{We denote the filtration up to the stopping time $\tau$ as $\mathcal{F}_\tau$.} The following lemma is proved in~\citet{kaufmann2016complexity} and is the major tool for deriving lower bounds in this paper.
\begin{lemma}[\citet{kaufmann2016complexity}]
For any two models $M_1,M_2$ and any event $\mathcal{E}\in\mathcal{F}_{\tau}$, we have
\begin{align}\label{eqn:one-dim-low1} \sum_{x\in\mathcal{X}}\mathbb{E}_{M_1}\left[N_x(\tau)\right]\mathrm{KL}(\nu_{1,x},\nu_{2,x}) \geq d(\mathbb{P}_{M_1}(\mathcal{E}),\mathbb{P}_{M_2}(\mathcal{E})), \end{align}
where $d(x,y):=x\log(x/y)+(1-x)\log((1-x)/(1-y))$, $\mathrm{KL}(\cdot,\cdot)$ is the KL divergence, and $\nu_{k,x}$ is the distribution of model $M_k$ at point $x$ for $k=1,2$.
\end{lemma}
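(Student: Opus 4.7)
The plan is to follow the standard change-of-measure argument from the pure-exploration bandit literature, in two clean steps: first identify the KL divergence between the laws of the entire observation sequence under $M_1$ and $M_2$ up to the stopping time $\tau$, then push this divergence through the data processing inequality applied to the indicator of $\mathcal{E}$.

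For the first step, I would let $\mathbb{P}_{M_k}^{\tau}$ denote the restriction of $\mathbb{P}_{M_k}$ to $\mathcal{F}_\tau$, and compute the log-likelihood ratio between the two laws. Since the algorithm's sampling rule is a deterministic (or possibly randomized but common to both models) function of past observations, the only source of divergence at each step is the distribution $\nu_{k,X_t}$ of the fresh observation $F(X_t,\xi_{X_t,t})$. Writing the likelihood ratio as a telescoping product over $t=1,\dots,\tau$ and taking expectation under $M_1$ yields, by a Wald-type identity adapted to stopping times,
\begin{equation*}
\mathrm{KL}\!\left(\mathbb{P}_{M_1}^{\tau},\mathbb{P}_{M_2}^{\tau}\right) \;=\; \mathbb{E}_{M_1}\!\left[\sum_{t=1}^{\tau}\mathrm{KL}\!\left(\nu_{1,X_t},\nu_{2,X_t}\right)\right] \;=\; \sum_{x\in\mathcal{X}}\mathbb{E}_{M_1}\!\left[N_x(\tau)\right]\mathrm{KL}\!\left(\nu_{1,x},\nu_{2,x}\right),
\end{equation*}
where the last equality regroups the sum over time according to which system was sampled.

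For the second step, I would apply the data processing inequality to the (very coarse) statistic $\mathbf{1}_{\mathcal{E}}$, which is $\mathcal{F}_\tau$-measurable by hypothesis. Pushing forward both $\mathbb{P}_{M_1}^{\tau}$ and $\mathbb{P}_{M_2}^{\tau}$ by this indicator gives two Bernoulli distributions with parameters $\mathbb{P}_{M_1}(\mathcal{E})$ and $\mathbb{P}_{M_2}(\mathcal{E})$ respectively, and the KL divergence between these two Bernoullis is exactly $d(\mathbb{P}_{M_1}(\mathcal{E}),\mathbb{P}_{M_2}(\mathcal{E}))$. The data processing inequality then yields
\begin{equation*}
\mathrm{KL}\!\left(\mathbb{P}_{M_1}^{\tau},\mathbb{P}_{M_2}^{\tau}\right) \;\geq\; d\!\left(\mathbb{P}_{M_1}(\mathcal{E}),\mathbb{P}_{M_2}(\mathcal{E})\right),
\end{equation*}
and combining with the identity from the first step gives \eqref{eqn:one-dim-low1}.

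The main obstacle, and the only place where care is needed, is the Wald-type identity at a stopping time. A clean way to handle it is to first establish the equality on the fixed-horizon event $\{\tau = n\}$ by the ordinary chain rule for KL divergence on product-like structures, and then verify that $\tau<\infty$ almost surely (so the decomposition passes to the limit by monotone convergence) and that $\mathbb{E}_{M_1}[N_x(\tau)]<\infty$ whenever the right-hand side is meaningful (else the bound is trivial). One must also be careful that the sampling rule at time $t$ is $\mathcal{F}_{t-1}$-measurable so that its distribution under $M_1$ and $M_2$ agree conditionally on the past, which is what makes the cross terms in the log-likelihood ratio collapse into the stated sum. Everything else is a direct application of standard information-theoretic inequalities, so I do not expect further technical obstructions.
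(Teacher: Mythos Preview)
Your proposal is correct and follows exactly the standard change-of-measure plus data-processing argument that establishes this inequality. Note, however, that the paper does not give its own proof of this lemma: it is quoted verbatim from \citet{kaufmann2016complexity} and used as a black-box tool, so there is no ``paper's own proof'' to compare against beyond the original reference, whose argument is essentially the one you outlined.
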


We first give a lower bound for the PCS-IZ guarantee.
\begin{theorem}\label{thm:one-dim-low}
Suppose that Assumptions \ref{asp:1}-\ref{asp:4} hold. We have 
\[  T(\delta,\mathcal{MC}_{c}) \geq \Theta\left[ \frac{1}{c^{2}}\log\left(\frac{1}{\delta}\right) \right]. \]
\end{theorem}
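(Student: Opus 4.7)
The plan is to apply the change-of-measure inequality \eqref{eqn:one-dim-low1} to a pair of convex models that have distinct optimizers but are uniformly close in total-variation/KL sense at every point in $\mathcal{X}$. Since any $(c,\delta)$-PCS-IZ algorithm must distinguish the two optima with failure probability at most $\delta$, the inequality forces a lower bound of order $\log(1/\delta)$ divided by the worst-case per-point KL divergence, which scales like $c^2/\sigma^2$.

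Concretely, I would take $\mathcal{X}=[N]$ and define two Gaussian models $M_1,M_2$ with common variance $\sigma^2$ but distinct mean functions
\[
f_1(x)=c(x-1),\qquad f_2(x)=c\,|x-2|.
\]
Both are discrete convex on $[N]$; $f_1$ has unique minimizer $x_1^\star=1$ and $f_2$ has unique minimizer $x_2^\star=2$, each with indifference-zone gap exactly $c$, so $M_1,M_2\in\mathcal{MC}_c$. A direct check shows $|f_1(x)-f_2(x)|=c$ for every $x\in[N]$, so for Gaussian outputs
\[
\mathrm{KL}(\nu_{1,x},\nu_{2,x})=\frac{(f_1(x)-f_2(x))^2}{2\sigma^2}=\frac{c^2}{2\sigma^2}\qquad\forall x\in\mathcal{X}.
\]
Next, let $\mathcal{E}$ be the event that the algorithm returns $x_1^\star=1$. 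The PCS-IZ guarantee under $M_1$ gives $\mathbb{P}_{M_1}(\mathcal{E})\geq 1-\delta$, and the PCS-IZ guarantee under $M_2$ (whose unique optimum is $x_2^\star=2\neq 1$) gives $\mathbb{P}_{M_2}(\mathcal{E})\leq\delta$. By monotonicity of $d$, $d(\mathbb{P}_{M_1}(\mathcal{E}),\mathbb{P}_{M_2}(\mathcal{E}))\geq d(1-\delta,\delta)=(1-2\delta)\log((1-\delta)/\delta)\geq (1/2)\log(1/(2\delta))$ for small $\delta$.

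Plugging these bounds into \eqref{eqn:one-dim-low1},
\[
\frac{c^2}{2\sigma^2}\sum_{x\in\mathcal{X}}\mathbb{E}_{M_1}[N_x(\tau)] \;\geq\; d(1-\delta,\delta),
\]
and since $\mathbb{E}_{M_1}[\tau]=\sum_x\mathbb{E}_{M_1}[N_x(\tau)]$, this gives $\mathbb{E}_{M_1}[\tau]\geq \Omega(c^{-2}\log(1/\delta))$. Taking the supremum over $\mathcal{MC}_c$ proves the claimed lower bound on $T(\delta,\mathcal{MC}_c)$.

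The main obstacle I expect is the model construction: the two models must simultaneously (i) be discrete convex on $[N]$, (ii) possess indifference-zone gap at least $c$ with \emph{distinct} minimizers, and (iii) have per-point KL divergence of only $O(c^2/\sigma^2)$ so that the sum on the left of \eqref{eqn:one-dim-low1} cannot be made large by merely sampling a few ``discriminating'' points. The choice $f_1(x)=c(x-1)$ and $f_2(x)=c|x-2|$ achieves all three at once because their difference is globally bounded by $c$, which is what decouples the lower bound from $N$. Everything else reduces to the standard Gaussian KL computation and the routine estimate $d(1-\delta,\delta)=\Theta(\log(1/\delta))$.
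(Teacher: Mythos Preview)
Your proof is correct and follows essentially the same approach as the paper: both apply the information-theoretic inequality \eqref{eqn:one-dim-low1} to a pair of Gaussian convex models in $\mathcal{MC}_c$ with distinct minimizers at $x=1$ and $x=2$. The only cosmetic difference is the choice of mean functions---the paper uses $f_1(x)=cx$ and $f_2(x)=c(|x-2|+2)$, which coincide for all $x\geq 2$ so that the KL divergence is nonzero only at $x=1$, whereas your pair differs by exactly $c$ at every point; either construction yields the same $\Theta(c^{-2}\log(1/\delta))$ bound.
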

\begin{proof}{Proof of Theorem \ref{thm:one-dim-low}.}
The proof is provided in \ref{ec:one-dim-low}.
\hfill\Halmos\end{proof}
The lower bound on $T(\epsilon,\delta,\mathcal{MC})$, i.e., the expected simulation cost for achieving the PGS guarantee, can be derived in a similar way by substituting $c$ with $2\epsilon$ in the construction of two models.
\begin{corollary}
Suppose that Assumptions \ref{asp:1}-\ref{asp:4} hold. We have
\[  T(\epsilon,\delta,\mathcal{MC}) \geq \Theta\left[ \frac{1}{\epsilon^{2}}\log\left(\frac{1}{\delta}\right) \right]. \]
\end{corollary}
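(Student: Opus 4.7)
{Proof of Corollary.}
The plan is to mimic the construction from the proof of Theorem \ref{thm:one-dim-low}, replacing the role of the indifference zone parameter $c$ by $2\epsilon$ so that the set of $\epsilon$-good selections discriminates the two models. Concretely, I will build two convex models $M_1,M_2\in\mathcal{MC}$ on $\mathcal{X}=[N]$ whose objective functions $f_1,f_2$ differ by exactly $2\epsilon$ at a single location, so that any decision variable satisfying the $(\epsilon,\delta)$-PGS criterion under $M_1$ cannot simultaneously satisfy the $(\epsilon,\delta)$-PGS criterion under $M_2$. I will take the simulation noise to be Gaussian with common variance $\sigma^2$, so that the per-point KL divergence $\mathrm{KL}(\nu_{1,x},\nu_{2,x})$ is $\tfrac{1}{2\sigma^2}(f_1(x)-f_2(x))^2$ and is supported on a single index where the gap equals $2\epsilon$, giving a pointwise KL of order $\epsilon^2/\sigma^2$.

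The main steps in order will be: (i) exhibit two one-dimensional convex functions on $[N]$ that agree everywhere except at one point $x_0$, where $|f_1(x_0)-f_2(x_0)|=2\epsilon$, arranged so that the unique minimizer of $f_1$ is $x_0$ while the unique minimizer of $f_2$ is some other point $x_1$ with $|f_2(x_0)-f_2(x_1)|=2\epsilon$ (a small affine perturbation of a symmetric convex template suffices, and one can verify discrete midpoint convexity from the template); (ii) define $\mathcal{E}$ as the event that the algorithm outputs a point whose $f_1$-value is at most $f_1(x_0)+\epsilon$, so that by the $(\epsilon,\delta)$-PGS guarantee applied to $M_1$ and $M_2$ we get $\mathbb{P}_{M_1}(\mathcal{E})\ge 1-\delta$ and $\mathbb{P}_{M_2}(\mathcal{E})\le \delta$ (here the $2\epsilon$ gap ensures that no decision can be $\epsilon$-good under both models); (iii) plug into the information-theoretic inequality \eqref{eqn:one-dim-low1} to obtain
\[
\mathbb{E}_{M_1}[N_{x_0}(\tau)]\cdot \frac{(2\epsilon)^2}{2\sigma^2}\;\ge\; d(1-\delta,\delta)\;\ge\;\log\!\bigl(1/(2.4\delta)\bigr),
\]
using the standard lower bound on the binary KL divergence; (iv) conclude $\mathbb{E}_{M_1}[\tau]\ge \mathbb{E}_{M_1}[N_{x_0}(\tau)] = \Omega\bigl(\epsilon^{-2}\log(1/\delta)\bigr)$, and hence $T(\epsilon,\delta,\mathcal{MC})=\Omega\bigl(\epsilon^{-2}\log(1/\delta)\bigr)$.

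The one subtlety that I expect to be the main obstacle is step (i): arranging the two convex templates so that (a) they differ at a single point, (b) the difference in values there is exactly $2\epsilon$, and (c) the discrete midpoint convexity $f(i+1)+f(i-1)\ge 2f(i)$ continues to hold for both functions at every index, including the perturbed one and its two neighbors. This is the same obstacle handled in the proof of Theorem \ref{thm:one-dim-low}, and the construction there (a flat or shallow convex piece around $x_0$ with a targeted perturbation) transfers directly once $c$ is renamed to $2\epsilon$; the rest of the argument is identical to the PCS-IZ lower bound and yields the claimed bound.
\hfill\Halmos
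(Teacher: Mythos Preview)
Your approach is exactly the paper's: the corollary is obtained from the proof of Theorem \ref{thm:one-dim-low} by substituting $c=2\epsilon$ in the two-model construction, and your steps (ii)--(iv) reproduce that argument verbatim.

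One small quantitative slip to fix in step (i): you cannot simultaneously have $|f_1(x_0)-f_2(x_0)|=2\epsilon$, agreement elsewhere, $x_0$ the \emph{unique} minimizer of $f_1$, and $|f_2(x_0)-f_2(x_1)|=2\epsilon$. Those conditions force $f_1(x_1)=f_2(x_1)=f_1(x_0)$, so $x_1$ is also a minimizer of $f_1$ and is $\epsilon$-good under both models, breaking step (ii). In the paper's construction with $c=2\epsilon$ the pointwise gap at $x=1$ is $2c=4\epsilon$ (not $2\epsilon$), which is what actually makes the $\epsilon$-good sets $\{1\}$ and $\{2\}$ disjoint. This changes the KL factor in step (iii) from $(2\epsilon)^2/(2\sigma^2)$ to $(4\epsilon)^2/(2\sigma^2)$, but of course leaves the $\Omega(\epsilon^{-2}\log(1/\delta))$ conclusion intact.
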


Combining with the upper bounds derived in Sections \ref{sec:adaptive}, \ref{sec:one-dim-uniform} and \ref{ec:pcsiz}, we conclude that the tri-section sampling algorithm is optimal up to a constant for the PCS-IZ guarantee, while having a $\log(N)$ order gap for the PGS guarantee. On the other hand, the shrinking uniform sampling algorithm is optimal for both guarantees up to a constant in the asymptotic regime $\delta\ll 1$. However, the space complexities of the tri-section sampling algorithm and the shrinking uniform sampling algorithms are $O(\log(N))$ and $O(N)$, respectively. This observation implies that the tri-section sampling algorithm is preferred for the PCS-IZ guarantee, while for the PGS guarantee we need to consider the trade-off between the simulation cost and the space complexity when choosing algorithms. 

Before concluding this section, we note that the subgradient descent algorithm in \citet{zhang2020discrete} requires the knowledge of the Lipschitz constant and has a simulation cost as $\tilde{O}(N^2\epsilon^{-2}\log(1/\delta))$, which is $O(N^2)$ larger than that of the TS and the SUS algorithms. This observation implies that subgradient-based search methods may not be able to fully utilize the discrete nature and the convex structure of problem \eqref{eqn:obj}, especially for low-dimensional problems. Therefore, the proposed algorithms in this section provide a non-trivial improvement for solving one-dimensional convex optimization via simulation problems and hint a potential improvement direction (namely, localization-based methods) for multi-dimensional problems.

\section{\revise{Simulation-optimization Algorithms} and Complexity Analysis: Multi-dimensional Case}
\label{sec:multi-dim}

In this section, we propose simulation-optimization algorithms to achieve the PGS guarantee for \revise{convex discrete optimization via simulation} problems with multi-dimensional decision variables. The decision space is considered as $\mathcal{X} = [N]^d$. 
In the multi-dimensional case, the discrete convexity of $f$ is defined by the so-called $L^\natural$-convexity, which is defined by the mid-point convexity for discrete variables. The exact definition will be given in Section \ref{sec:Lnatural}. The $L^\natural$-convexity can lead to the property that the discrete convex function has a convex extension along with an explicit subgradient defined on the convex hull of $\mathcal{X}$. 

We outline the intuition underlying the algorithm design of this section before discussing the details. \revisee{Since we have observed the power of localization from the one-dimensional case, the major approach is to design multi-dimensional algorithms based on the same idea.}
The first idea of applying the localization technique is to extend the tri-section sampling algorithm to the multi-dimensional case. A direct generalization of the tri-section sampling algorithm results in the zeroth-order stochastic ellipsoid method~\citep{agarwal2011stochastic} and the zeroth-order random walk method~\citep{liang2014zeroth}, whose computational complexities have $O(d^{33})$ and $O(d^{14})$ dependence on the dimension, respectively. On the other hand, we show that the shrinking uniform sampling method can be naturally extended to the multi-dimensional case. The multi-dimensional shrinking uniform sampling algorithm also has an expected simulation cost independent of the scale $N$ using the asymptotic criterion in \citet{kaufmann2016complexity} (i.e., when $\delta$ is sufficiently small). However, the expected simulation cost has an exponential dependence on the dimension $d$ and, therefore, the shrinking uniform sampling algorithm is only suitable for low-dimensional problems. 

\revisee{We thus take an alternative approach and combine the localization operation with the \textit{subgradient information}, which is known to be useful for high-dimensional problems.} In this work, we design stochastic cutting-plane methods, which utilize properties of $L^\natural$-convex functions and the \lovasz extension to evaluate unbiased stochastic subgradients at each point via finite difference.
More specifically, we develop a new framework to design stochastic cutting-plane methods and thus reduce the dependence of the simulation cost on $d$. A straightforward application our proposed framework leads to stochastic cutting-plane methods whose simulation cost has a $O(d^3)$ dependence on $d$. In addition, the stochastic cutting-plane methods have only a logarithmic dependence on the Lipschitz constant $L$, while the gradient-based method in~\citet{zhang2020discrete} has a higher-order dependence on $L$. 
Further utilizing the \textit{discrete nature} of problem \eqref{eqn:obj}, we develop the dimension reduction algorithm,
whose simulation cost is upper bounded by a constant that is independent of the Lipschitz constant. In addition, the dimension reduction algorithm does not require any prior knowledge about the Lipschitz constant, which makes it suitable for the case when prior knowledge about the objective function is limited.

\subsection{Discrete Convex Functions in Multi-dimensional Space}
\label{sec:Lnatural}

Similar to the one-dimensional case, discrete convex functions in multi-dimensional space are characterized by the discrete midpoint convexity property and have a convex piecewise linear extension. In~\citet{murota2003discrete}, this collection of functions is named $L^{\natural}$-convex functions and is proved to have the property that local optimality implies global optimality.  The exact definition of $L^\natural$-convex functions is given below.
\begin{definition}
A function $f(x):\mathcal{X}\mapsto\mathbb{R}$ is called a $L^\natural$-convex function if the discrete midpoint convexity property holds:
\[ f(x)+f(y)\geq f(\lceil (x+y)/2\rceil) + f(\lfloor (x+y)/2\rfloor),\quad \forall x,y\in\mathcal{X}. \]

The set of models such that $f(x)$ is $L^\natural$-convex on $\mathcal{X}$ is denoted as $\mathcal{MC}$. The set of models such that $f(x)$ is $L^\natural$-convex with an indifference zone parameter $c$ is denoted as $\mathcal{MC}_c$.
\end{definition}
\begin{remark}
As noted in~\citet{zhang2020discrete}, the feasible set $\mathcal{X}=[N]^d$ is a $L^\natural$-convex set and thus we only need the discrete midpoint convexity property to define $L^\natural$-convex functions on $\mathcal{X}$. In the case when $d=1$, the $L^\natural$-convexity is equivalent to the discrete convexity defined in Section \ref{sec:one-dim}. Hence, the definitions of $\mathcal{MC}$ and $\mathcal{MC}_c$ are consistent with Section \ref{sec:one-dim}. 
\end{remark}
%
%
The following property shows that $L^\natural$-convex functions can be viewed as a generalization of submodular functions.
\begin{lemma}[\citet{murota2003discrete}]\label{thm:l-cvx}
Suppose that the function $f(x):\mathcal{X}\mapsto\mathbb{R}$ is $L^\natural$-convex. Then, the translation submodularity holds:
\[ f(x) + f(y) \geq f( (x-\alpha\mathbf{1}) \vee y ) + f( x \wedge (y+\alpha\mathbf{1}) ),\quad \forall x,y\in\mathcal{X},~\alpha\in\mathbb{N}~\mathrm{s.t.}~(x-\alpha\mathbf{1}) \vee y,~ x \wedge (y+\alpha\mathbf{1})\in\mathcal{X}. \]
%
\end{lemma}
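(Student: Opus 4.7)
The plan is to derive the translation submodularity of $f$ from its defining discrete midpoint convexity in two parts.

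\emph{Part 1 (lifting to ordinary submodularity).} Introduce the auxiliary function
\[ g(k,z) := f(z - k\mathbf{1}) \quad \text{on } \{(k,z)\in\mathbb{Z}\times\mathbb{Z}^d : z - k\mathbf{1}\in\mathcal{X}\}. \]
A short case analysis on the parities of $k_1+k_2$ and of each $z_{1,i}+z_{2,i}$ shows that the unordered pair $\{z_+ - k_+\mathbf{1},\, z_- - k_-\mathbf{1}\}$, where $(k_\pm,z_\pm)$ denote the componentwise ceilings/floors of the midpoint of $(k_1,z_1),(k_2,z_2)$, coincides with $\{\lceil (u+v)/2\rceil,\, \lfloor (u+v)/2\rfloor\}$ for $u := z_1-k_1\mathbf{1}, v := z_2-k_2\mathbf{1}$. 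Hence $g$ inherits discrete midpoint convexity from $f$. Now, assuming ordinary submodularity of $g$ (Part 2), apply it at $(0,x)$ and $(\alpha,y+\alpha\mathbf{1})$ with $\alpha\geq 0$. Since
\[ (0,x)\vee(\alpha,y+\alpha\mathbf{1}) = (\alpha,\, x\vee(y+\alpha\mathbf{1})), \qquad (0,x)\wedge(\alpha,y+\alpha\mathbf{1}) = (0,\, x\wedge(y+\alpha\mathbf{1})), \]
unwinding the definition of $g$ yields exactly $f(x)+f(y) \geq f((x-\alpha\mathbf{1})\vee y) + f(x\wedge(y+\alpha\mathbf{1}))$.

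\emph{Part 2 (submodularity from midpoint convexity).} It remains to show that any discretely midpoint-convex $h$ on an $L^\natural$-convex domain satisfies $h(u)+h(v)\geq h(u\vee v) + h(u\wedge v)$. I would proceed by induction on $\|u-v\|_\infty$. The base case $\|u-v\|_\infty\leq 1$ is immediate: per coordinate $\lceil (u_i+v_i)/2\rceil = \max(u_i,v_i)$ and $\lfloor (u_i+v_i)/2\rfloor = \min(u_i,v_i)$, so the midpoint convexity inequality coincides with submodularity. For the inductive step, set $w := \lceil (u+v)/2\rceil$ and $w' := \lfloor (u+v)/2\rfloor$; then $\|u-w\|_\infty$ and $\|v-w'\|_\infty$ are both at most $\lceil \|u-v\|_\infty/2\rceil$, strictly less than $\|u-v\|_\infty$ once the latter is at least $2$. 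Aggregating midpoint convexity on $(u,v)$ with the inductive hypothesis (submodularity) applied to $(u,w)$ and $(v,w')$ produces, after telescoping, the submodular inequality for $(u,v)$.

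\emph{Main obstacle.} The technical core of the argument is the inductive step in Part 2. Summing the three inequalities naively yields an inequality involving $h$ at the four-tuple $(u\vee w, u\wedge w, v\vee w', v\wedge w')$ on one side and at $(u\vee v, u\wedge v, w, w')$ on the other, and these two four-tuples share the same componentwise value-multiset $\{u_i,v_i,w_i,w'_i\}$ in every coordinate but in general consist of distinct \emph{vectors}. Closing the induction therefore requires further iterated applications of midpoint convexity on reduced pairs to match these vectors up exactly, and this is the classical bookkeeping at the heart of Murota's discrete convex analysis (see Chapter 7 of \citet{murota2003discrete}), which we would adapt here.
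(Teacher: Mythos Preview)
The paper does not supply its own proof of this lemma; it is quoted verbatim from \citet{murota2003discrete} and used as a black box. So there is nothing to compare against, and your proposal is simply an attempt to reconstruct Murota's argument.

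Your outline is the standard one: lift $f$ to $g(k,z)=f(z-k\mathbf{1})$ on $\mathbb{Z}^{d+1}$, check that $g$ inherits discrete midpoint convexity, deduce that $g$ is submodular on the lattice $\mathbb{Z}^{d+1}$, and then specialize submodularity of $g$ at $(0,x)$ and $(\alpha,y+\alpha\mathbf{1})$ to read off translation submodularity for $f$. Part~1 is correct and cleanly done; the unwinding at the end is exactly right.

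The gap you flag in Part~2 is genuine, and your honesty about it is appropriate, but the proposed induction on $\|u-v\|_\infty$ with a single application of midpoint convexity plus two inductive calls does not close without a more careful scheme. The issue you identify---that $(u\vee w,\,u\wedge w,\,v\vee w',\,v\wedge w')$ and $(u\vee v,\,u\wedge v,\,w,\,w')$ agree only as coordinatewise multisets---is exactly the obstruction, and ``further iterated applications'' is not yet a proof. Murota's actual argument (Theorem~7.20 in \citet{murota2003discrete}) does not proceed by this telescoping; rather, it reduces to the case $\|u-v\|_\infty=1$ by peeling one coordinate at a time, using the equivalent characterization that $L^\natural$-convexity is the same as submodularity of each local restriction $z\mapsto f(a+z)$ on $\{0,1\}^d$ together with a one-dimensional midpoint condition along the diagonal. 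If you want a self-contained argument, that coordinatewise reduction is the cleaner route; the global induction on $\|u-v\|_\infty$ you sketch can be made to work but requires an auxiliary two-parameter induction that you have not set up.
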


By the translation submodularity, the $L^\natural$-convex function restricted to a cube $x+\{0,1\}^d\subset\mathcal{X}$ is a submodular function. Therefore, the \lovasz extension~\citep{lovasz1983submodular} can be constructed as the convex piecewise linear extension inside each cube. In addition, $L^\natural$-convex functions are integrally convex functions~\citep{murota2003discrete}. Hence, we can obtain a continuous convex function on $[1,N]^d$ by piecing together the \lovasz extension in each cube. More importantly, we can calculate a subgradient of the convex extension with $O(d)$ function value evaluations. Hence, $L^\natural$-convex functions provide a good framework for extending the continuous convex optimization theory to the discrete case. 
%
%
%
%
In the remainder of this subsection, we specify this intuition of $L^\natural$-convex functions in a rigorous way. We first define the \lovasz extension of submodular functions and give an explicit subgradient of the \lovasz extension at each point. 
\begin{definition}
Suppose that $f(x):\{0,1\}^d\mapsto \mathbb{R}$ is a submodular function. For \revise{any $x=(x_1,\dots,x_d)\in[0,1]^d$}, we say that a permutation $\alpha_x:[d]\mapsto[d]$ is a \textbf{consistent permutation} of $x$, if
\[ x_{\alpha_x(1)} \geq x_{\alpha_x(2)} \geq \cdots \geq x_{\alpha_x(d)}. \]
\revise{We define $S^{x,0} := (0,\dots,0)\in\mathcal{X}$. For each $i\in[d]$, the \textbf{$i$-th neighbouring points} of $x$ is defined as
\[ S^{x,i} := \sum_{j=1}^i~e_{\alpha_x(j)} \in \mathcal{X}, \]
where vector $e_k$ is the $k$-th vector in the standard basis of $\mathbb{R}^d$.} We define the \textbf{\lovasz extension} $\tilde{f}(x):[0,1]^d\mapsto\mathbb{R}$ as
\begin{align}\label{eqn:submodular} \tilde{f}(x) := f\left(S^{x,0}\right) + \sum_{i=1}^d\left[f\left(S^{x,i}\right) - f\left(S^{x,i-1}\right)\right]x_{\alpha_{x}(i)}. \end{align}
\end{definition}
We note that the value of the \lovasz extension does not rely on the choice of the consistent permutation. We list several well-known properties of the \lovasz extension and refer their proofs to~\citet{lovasz1983submodular,fujishige2005submodular}. 
\begin{lemma}\label{lem:submodular}
Suppose that Assumptions \ref{asp:1}-\ref{asp:4} hold. Then, the following properties hold for $\tilde{f}(x)$:
\begin{itemize}
    \item[(i)] For any $x\in \mathcal{X}$, it holds that $\tilde{f}(x) = f(x)$.
    \item[(ii)] The minimizers of $\tilde{f}(x)$ satisfy $\argmin_{x\in[0,1]^d}~\tilde{f}(x) = \argmin_{x\in \mathcal{X}}~{f}(x)$.
    \item[(iii)] The function $\tilde{f}(x)$ is a convex function on $[0,1]^d$.
    \item[(iv)] A subgradient $g\in\partial\tilde{f}(x)$ is given by
    \begin{align}\label{eqn:subgrad} g_{\alpha_x(i)} := f\left(S^{x,i}\right) - f\left(S^{x,i-1}\right),\quad\forall i\in[d]. \end{align}
\end{itemize}
\end{lemma}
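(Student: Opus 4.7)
The plan is to ground the four claims in a single reformulation: for each permutation $\sigma$ of $[d]$, define the affine function
\[
L_\sigma(y) := f(\emptyset) + \sum_{i=1}^d \bigl[f(T_i^\sigma) - f(T_{i-1}^\sigma)\bigr] y_{\sigma(i)}, \qquad T_i^\sigma := \{\sigma(1),\dots,\sigma(i)\},
\]
so that formula \eqref{eqn:submodular} reads $\tilde{f}(y) = L_{\alpha_y}(y)$ for any permutation $\alpha_y$ consistent with $y$. A preliminary check, that the value does not depend on which consistent permutation is chosen when some coordinates of $y$ tie, is immediate from the adjacent-transposition formula derived below (the increment vanishes whenever the two swapped coordinates are equal).

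Property (i) is then a one-line telescoping computation: for $x \in \{0,1\}^d$ with support $S$ of size $k$, a consistent permutation lists $S$ first, so $x_{\alpha_x(i)} = 1$ for $i \leq k$ and $0$ otherwise, and the sum collapses to $f(S^{x,k}) = f(x)$.

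The core step is to establish the maximum representation $\tilde{f}(y) = \max_\sigma L_\sigma(y)$ on $[0,1]^d$, from which (iii) follows immediately since a pointwise max of affine functions is convex. The identity rests on an adjacent-transposition lemma: exchanging $\sigma(i)$ with $\sigma(i+1)$ changes $L_\sigma(y)$ by $(y_{\sigma(i+1)} - y_{\sigma(i)}) \cdot A$, where
\[
A := f\bigl(T_{i-1}^\sigma \cup \{\sigma(i)\}\bigr) + f\bigl(T_{i-1}^\sigma \cup \{\sigma(i+1)\}\bigr) - f\bigl(T_{i-1}^\sigma\bigr) - f\bigl(T_{i+1}^\sigma\bigr) \geq 0
\]
by the submodularity of $f$ on $\{0,1\}^d$, which follows from the $L^\natural$-convexity of the ambient function via Lemma \ref{thm:l-cvx} taken with $\alpha = 0$. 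Hence bubble-sorting any $\sigma$ into a permutation consistent with $y$ weakly increases $L_\sigma(y)$, giving $L_\sigma(y) \leq L_{\alpha_y}(y) = \tilde{f}(y)$; the reverse inequality is trivial.

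With (iii) in hand, (ii) follows because $\tilde{f}$ is piecewise affine on the simplicial decomposition of $[0,1]^d$ indexed by coordinate orderings, so its minimum over this compact convex set is attained at a vertex, which lies in $\{0,1\}^d$; combined with $\tilde{f}|_{\{0,1\}^d} = f$ from (i), the argmin sets coincide. Finally, (iv) is a direct reading of the max representation: the vector $g$ defined in \eqref{eqn:subgrad} is precisely the coefficient vector of the affine $L_{\alpha_x}$, so
\[
\tilde{f}(y) \;\geq\; L_{\alpha_x}(y) \;=\; L_{\alpha_x}(x) + g^\top(y-x) \;=\; \tilde{f}(x) + g^\top(y-x),
\]
using the max representation for the inequality and consistency of $\alpha_x$ at $x$ for the last equality. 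The only genuinely nontrivial step is the submodularity-based adjacent-transposition lemma; the rest is bookkeeping.
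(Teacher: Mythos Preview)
Your argument is correct and is, in fact, the standard proof of these facts as found in \citet{lovasz1983submodular} and \citet{fujishige2005submodular}. The paper itself does not give a proof of this lemma: it simply cites those two references. So there is nothing to compare against on the paper's side --- you have supplied a self-contained proof where the paper defers to the literature.

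Two small remarks on presentation. First, in your argument for (ii), the claim ``its minimum over this compact convex set is attained at a vertex'' should be understood as following from the affine-per-simplex structure (on each simplex $\{y : y_{\sigma(1)} \ge \cdots \ge y_{\sigma(d)}\}$ the extension equals the single affine function $L_\sigma$, hence attains its minimum at a simplex vertex, and all such vertices lie in $\{0,1\}^d$), not from convexity alone; you do indicate this, but it is worth making explicit since convexity by itself would push the minimum toward the interior, not the vertices. Second, the literal set equality $\argmin_{[0,1]^d}\tilde f = \argmin_{\{0,1\}^d} f$ can fail when $f$ has non-unique minimizers (e.g.\ $f$ constant makes every point of the cube a minimizer of $\tilde f$); what your argument actually establishes, and what is used downstream in the paper, is that the minimum values coincide and that there is always an integral minimizer. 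This is a looseness in the statement rather than in your proof.
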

%
It is proved in~\citet{zhang2020discrete} that, with $2d$ simulation runs, we can generate a stochastic subgradient at point $x$ by
\begin{align}\label{eqn:stochastic-subgrad} \hat{g}_{\alpha_x(i)} := F\left(S^{x,i},\xi^1_i\right) - F\left(S^{x,i-1},\xi_{i-1}^2\right),\quad\forall i\in[d]. \end{align}
%
%
Then, we show that the \lovasz extension in the neighborhood of each point can be pieced together to form a convex function on $\mathrm{conv}(\mathcal{X}) = [1,N]^d$. We define the local neighborhood of each point $y\in[1,N-1]^d$ as the cube
\[ \mathcal{C}_y := y + [0, 1]^d. \]
We denote the objective function $f(x)$ restricted to $\mathcal{C}_y \cap \mathcal{X}$ as $f_y(x)$, which is submodular by the translation submodularity of $f(x)$.
%
%
For point $x\in\mathcal{C}_y$, we denote $\alpha_x$ as a consistent permutation of $x-y$ in $[0,1]^d$ and, for each $i\in\{0,1,\dots,d\}$, the corresponding $i$-th neighboring point of $x$ is defined as
\[ S^{x,i} := y + \sum_{j=1}^i~e_{\alpha_x(j)}. \]
Then, the \lovasz extension of $f_y(x)$ in $\mathcal{C}_y$ can be calculated as
\begin{align*} \tilde{f}_y(x) := f\left(S^{x,0}\right) + \sum_{i=1}^d\left[f\left(S^{x,i}\right) - f\left(S^{x,i-1}\right)\right]x_{\alpha_{x}(i)}. \end{align*}
Now, we piece together the \lovasz extension in each cube by defining
\begin{align}\label{eqn:convex-extension} \tilde{f}(x) := \tilde{f}_y(x),\quad \forall x\in[1,N]^d,~y\in[N-1]^d\quad\mathrm{s.t.}~ x\in \mathcal{C}_y. \end{align}
It is proved in~\citet{murota2003discrete} and \citet{zhang2020discrete} that $\tilde{f}(x)$ is well-defined and is a convex function.
\revise{\begin{lemma}\label{thm:consistent}
The function $\tilde{f}(x)$ in \eqref{eqn:convex-extension} is well-defined and convex on $\mathcal{X}$.
\end{lemma}}
Utilizing properties (i) and (ii) of Lemma \ref{lem:submodular}, problem \eqref{eqn:obj} is equivalent to the relaxed problem
\begin{align}\label{eqn:obj-relax} f^* := \min_{x\in[1,N]^d}~\tilde{f}(x), \end{align}
which is convex according to Lemma \ref{thm:consistent}. Moreover, the subgradient \eqref{eqn:subgrad} and stochastic subgradient \eqref{eqn:stochastic-subgrad} are valid for the convex extension $\tilde{f}(x)$. Similarly, (stochastic) subgradients can be computed in the neighboring cube of each point and it does not matter which cube is chosen for points belonging to multiple cubes. Finally, the linear-time rounding process proposed in~\citet{zhang2020discrete} reduces the problem of finding PGS solutions of problem \eqref{eqn:obj} to that of the relaxed problem \eqref{eqn:obj-relax}. The pseudo-code is provided in the following algorithm.
\bigskip
\begin{breakablealgorithm}
\caption{Rounding process to a feasible solution}
\label{alg:multi-dim-round}
\begin{algorithmic}[1]
\Require{Model $\mathcal{X},(\mathsf{Y},\mathcal{B}_\mathsf{Y}),F(x,\xi_x)$, optimality guarantee parameters $\epsilon$ and $\delta$, $(\epsilon/2,\delta/2)$-PGS solution $\bar{x}$ to problem \eqref{eqn:obj-relax}.}
\Ensure{An $(\epsilon,\delta)$-PGS solution $x^*$ to problem \eqref{eqn:obj}.}
\State Compute a consistent permutation of $\bar{x}$, denoted as $\alpha$.
\State Compute the neighbouring points of $\bar{x}$, denoted as $S^0,\dots,S^d$.
\State Simulate $F(S^0,\xi_0),\dots,F(S^d,\xi_d)$ until the $1-\delta/4$ confidence half-width is smaller than $\epsilon/4$.
\State Return the point $x^*$ with the minimal empirical mean.
\end{algorithmic}
\end{breakablealgorithm}
\bigskip
The following theorem verifies the correctness and estimates the simulation cost of Algorithm \ref{alg:multi-dim-round}. 
\revise{\begin{lemma}[\cite{zhang2020discrete}]\label{thm:round}
Suppose that Assumptions \ref{asp:1}-\ref{asp:4} hold. The solution returned by Algorithm \ref{alg:multi-dim-round} satisfies the $(\epsilon,\delta)$-PGS guarantee and the simulation cost of Algorithm \ref{alg:multi-dim-round} is at most
\[ {O}\left[ \frac{d}{\epsilon^2}\log\left(\frac{d}{\delta}\right) + d \right] = \tilde{O}\left[ \frac{d}{\epsilon^2}\log\left(\frac{1}{\delta}\right) \right]. \]
\end{lemma}}

The rounding process for the $(c,\delta)$-PCS-IZ guarantee follows by choosing $\epsilon = c/2$. 

\subsection{Multi-dimensional Shrinking Uniform Sampling Algorithm}
\label{sec:multi-uni}

In this subsection, we give the multi-dimensional version of the shrinking uniform sampling (SUS) algorithm designed in Section \ref{sec:one-dim-uniform}. Similar to the one-dimensional case, the asymptotic simulation cost of the multi-dimensional algorithm is upper bounded by a constant that does not depend on the problem scale $N$ and the Lipschitz constant of the objective function. Hence, the multi-dimensional algorithm provides a matching simulation cost to the one-dimensional case. However, the expected simulation cost is exponentially dependent on the dimension $d$. Therefore, the multi-dimensional SUS algorithm is \revise{mainly theoretical and} only suitable for low-dimensional problems. 

The main idea of the generalization to multi-dimensional problems is to view optimization algorithms as (usually biased) estimators to the optimal value, 
%
%
which is elaborated in the following definition.
\begin{definition}
Given a constant $C>0$, we say that an algorithm is \textbf{sub-Gaussian with dimension $d$ and parameter $C$} if for any $d$-dimensional $L^\natural$-convex problem, any $\epsilon>0$ and small enough $\delta>0$, the algorithm returns a PGS solution $\hat{x}$ along with an estimate $\hat{f}^*$ to the optimal value $f^*$ that satisfies $|\hat{f}^* - f^* | \leq \epsilon$ with probability at least $1-\delta$ using at most
\[ T(\epsilon,\delta) := \tilde{O}\left[\frac{2C}{\epsilon^2}\log(\frac{2}{\delta})\right] \]
simulation runs.
\end{definition}

For example, Theorem \ref{thm:one-dim-uni} shows that the one-dimensional SUS algorithm (Algorithm \ref{alg:one-dim-uni}) returns an $(\epsilon/2,\delta/2)$-PGS solution with $\tilde{O}[\epsilon^{-2}\log(1/\delta)]$ simulations. Then, we can simulate the function value at the solution for ${O}[\epsilon^{-2}\log(1/\delta)]$ times such that the $1-\delta/2$ confidence half-width becomes smaller than $\epsilon/4$. Then, the empirical mean of function values at the solution is at most $\epsilon$ distant from $f^*$ with probability at least $1-\delta$. \revise{Hence, we know that Algorithm \ref{alg:one-dim-uni} is sub-Gaussian with dimension $1$. We denote its associated parameter as $C$.} We note that if we treat algorithms as estimators, the estimators are generally ``\textit{biased}'' (but consistent). This fact implies that the empirical mean of several estimates to the optimal value does not produce a better optimality guarantee, while the empirical mean of several unbiased estimators usually has a tighter deviation bound.

Now, we inductively construct sub-Gaussian algorithms for multi-dimensional problems. We first define the marginal objective function as
\begin{align}\label{eqn:munti-dim-uni-2} f^{d-1}(x) := \min_{y\in[N]^{d-1}} f(y,x). \end{align}
Observe that each evaluation of $f^{d-1}(x)$ requires solving a $(d-1)$-dimensional $L^\natural$-convex sub-problem. Hence, if we have an algorithm for $(d-1)$-dimensional $L^\natural$-convex problems, we only need to solve the one-dimensional problem
\begin{align}\label{eqn:multi-dim-uni} \min_{x\in[N]}~f^{d-1}(x) = \min_{x\in[N]}\min_{y\in[N]^{d-1}}~f(y,x) = \min_{x\in\mathcal{X}}~f(x) \end{align}
Moreover, we can prove that problem \eqref{eqn:multi-dim-uni} is also a convex problem.
\begin{lemma}\label{lem:multi-dim-uni}
If function $f(x)$ is $L^\natural$-convex, then function $f^{d-1}(x)$ is $L^\natural$-convex on $[N]$.
\end{lemma}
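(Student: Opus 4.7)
My plan is to verify the discrete midpoint convexity inequality for $f^{d-1}$ directly by pulling back to the original $L^\natural$-convex function $f$ on $[N]^d$. Since in one dimension $L^\natural$-convexity is the same as ordinary discrete convexity (as the paper observes in the remark after the definition), it suffices to establish
\[ f^{d-1}(s) + f^{d-1}(t) \;\geq\; f^{d-1}\!\left(\lceil (s+t)/2 \rceil\right) + f^{d-1}\!\left(\lfloor (s+t)/2 \rfloor\right) \]
for arbitrary $s,t \in [N]$.

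First, because $[N]^{d-1}$ is finite, for each $s,t$ I can pick minimizers $y_s, y_t \in [N]^{d-1}$ achieving $f^{d-1}(s) = f(y_s, s)$ and $f^{d-1}(t) = f(y_t, t)$. Then I would apply the discrete midpoint convexity of $f$ on $[N]^d$ to the pair $(y_s, s), (y_t, t)$:
\[ f(y_s, s) + f(y_t, t) \;\geq\; f\!\left(\bigl\lceil (y_s+y_t)/2 \bigr\rceil, \lceil (s+t)/2 \rceil\right) + f\!\left(\bigl\lfloor (y_s+y_t)/2 \bigr\rfloor, \lfloor (s+t)/2 \rfloor\right), \]
noting the ceiling/floor are componentwise and both arguments remain in $[N]^d$ since $[N]$ is closed under these operations.

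To finish, I would use the definition of $f^{d-1}$ as a pointwise minimum to bound each term on the right: the first term is $\geq f^{d-1}(\lceil (s+t)/2 \rceil)$ and the second term is $\geq f^{d-1}(\lfloor (s+t)/2 \rfloor)$. Chaining the inequalities gives the desired midpoint convexity.

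I do not anticipate a real obstacle here: the argument is a textbook ``projection preserves $L^\natural$-convexity'' computation, and the only subtlety is keeping track of which coordinate is being marginalized and ensuring the resulting ceilings/floors stay inside $[N]^d$, which follows trivially from $[N]$ being a discrete interval. The finiteness of $[N]^{d-1}$ guarantees existence of the minimizers $y_s, y_t$, so no additional compactness or semi-continuity argument is required.
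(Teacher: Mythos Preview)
Your proposal is correct and follows essentially the same route as the paper's proof: pick minimizers $y_s,y_t$, apply the discrete midpoint convexity of $f$ on $[N]^d$, and bound each resulting term below by the corresponding value of $f^{d-1}$. The only cosmetic difference is that the paper checks the inequality just for consecutive integers $s=k-1$, $t=k+1$ (which is the one-dimensional definition of discrete convexity used in Section~\ref{sec:one-dim}), whereas you verify it for arbitrary $s,t\in[N]$.
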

\begin{proof}{Proof of Lemma \ref{lem:multi-dim-uni}.}
The proof is provided in \ref{ec:lem-multi-dim-uni}.
\hfill\Halmos\end{proof}

Based on the observations above, we can use sub-Gaussian algorithms for $(d-1)$-dimensional problems and Algorithm \ref{alg:one-dim-uni} to construct sub-Gaussian algorithms for $d$-dimensional problems. We give the pseudo-code in Algorithm \ref{alg:multi-dim-uni}.
\bigskip
\begin{breakablealgorithm}
\caption{Multi-dimensional shrinking uniform sampling algorithm}
\label{alg:multi-dim-uni}
\begin{algorithmic}[1]
\Require{Model $\mathcal{X},(\mathsf{Y},\mathcal{B}_\mathsf{Y}),F(x,\xi_x)$, optimality guarantee parameters $\epsilon$ and $\delta$, sub-Gaussian algorithm $\mathcal{A}$ with dimension $d-1$.}
\Ensure{An $(\epsilon,\delta)$-PGS solution $x^*$ to problem \eqref{eqn:obj}.}
\State Set the active set $\mathcal{S}\leftarrow [N]$.
\State Set the step size $s\leftarrow1$ and the maximal number of comparisons $T_{max}\leftarrow N$.
\State Set $N_{cur} \leftarrow +\infty$.
\While{the size of $\mathcal{S}$ is at least $3$} \Comment{Iterate until $\mathcal{S}$ has at most $2$ points.}
    \If{ $|\mathcal{S}| \leq N_{cur} / 2$ } \Comment{Update the confidence interval.}
        \State Record current active set size $N_{cur} \leftarrow |\mathcal{S}|$.
        \State Set the confidence half-width $h \leftarrow N_{cur}\cdot \epsilon / 160$.
        \State For each $x\in\mathcal{S}$, use algorithm $\mathcal{A}$ to get an estimate to $f^{d-1}(x)$ such that
        \[ \left|\hat{f}^{d-1}(x) - f^{d-1}(x)\right| \leq h \]
        \quad\quad\quad holds with probability at least $1-\delta/(2T_{max})$.
    \EndIf
    \If{$\hat{f}^{d-1}(x) + h \leq \hat{f}^{d-1}(y) - h$ for some $x,y\in\mathcal{S}$} \Comment{\textbf{Type-I Operation}}
        \If{$x < y$}
            \State Remove all points $z\in\mathcal{S}$ with the property $z\geq y$ from $\mathcal{S}$.
        \Else
            \State Remove all points $z\in\mathcal{S}$ with the property $z\leq y$ from $\mathcal{S}$.
        \EndIf
    \Else \Comment{\textbf{Type-II Operation}}
        \State Update the step size $s\leftarrow 2s$. 
        \State Update $\mathcal{S}\leftarrow\{x_{min},x_{min}+s,\dots,x_{min}+ks \}$, where $x_{min}=\min_{x\in \mathcal{S}}~x$ and $k = \lceil|\mathcal{S}|/2\rceil-1$.
    \EndIf
\EndWhile \Comment{Now $\mathcal{S}$ has at most $2$ points.}
\State For each $x\in\mathcal{S}$, use Algorithm $\mathcal{A}$ to obtain an estimate to $f^{d-1}(x)$ such that
    \[ \left| \hat{f}^{d-1}(x) - f^{d-1}(x) \right| \leq \epsilon / 4 \]
holds with probability at least $1-\delta/(2T_{max})$. 
\State Return $x^* \leftarrow \argmin_{x\in\mathcal{S}}~\hat{f}^{d-1}(x)$.
\end{algorithmic}
\end{breakablealgorithm}
\bigskip
We prove that Algorithm \ref{alg:multi-dim-uni} is sub-Gaussian with dimension $d$ and estimate its parameter.
\begin{theorem}\label{thm:multi-dim-uni}
Suppose that Assumptions \ref{asp:1}-\ref{asp:4} hold, and that Algorithm $\mathcal{A}$ is sub-Gaussian with dimension $d-1$ and parameter $C$. Then, Algorithm \ref{alg:multi-dim-uni} is a sub-Gaussian algorithm with dimension $d$ and parameter $MC$, where $M>0$ is an absolute constant.
\end{theorem}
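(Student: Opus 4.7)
The plan is to reduce the analysis to a one-dimensional problem on the marginal objective $f^{d-1}(x)=\min_{y\in[N]^{d-1}} f(y,x)$, which is $L^\natural$-convex on $[N]$ by Lemma \ref{lem:multi-dim-uni} and satisfies $\min_{x\in[N]}f^{d-1}(x)=f^*$. Algorithm \ref{alg:multi-dim-uni} has precisely the Type-I/Type-II shrinkage structure of Algorithm \ref{alg:one-dim-uni}, but each ``simulation'' of the underlying function is replaced by one call to the sub-Gaussian subroutine $\mathcal{A}$ applied to a $(d-1)$-dimensional sub-problem with the last coordinate held fixed. Viewing $\mathcal{A}$ as a black-box stochastic oracle that returns an $h$-accurate estimate of $f^{d-1}(x)$ at cost $\tilde{O}[2C/h^2\log(2/\delta')]$, I would replay the correctness and cost arguments underlying Theorem \ref{thm:one-dim-uni}, tracking how the parameter $C$ propagates.

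For correctness, define the good event $\mathcal{G}$ on which every estimate $\hat{f}^{d-1}(x)$ produced during execution satisfies $|\hat{f}^{d-1}(x)-f^{d-1}(x)|\leq h$, with $h=N_{\mathrm{cur}}\epsilon/160$ in the main loop and $h=\epsilon/4$ in the final step. Because $N_{\mathrm{cur}}$ is at least halved whenever it is updated, there are at most $\lceil\log_2 N\rceil+1$ re-sampling phases, each producing at most $N_{\mathrm{cur}}$ estimates, for a total of $O(N)$ calls to $\mathcal{A}$; a union bound with per-call failure probability $\delta/(2T_{\max})=\delta/(2N)$ yields $\mathbb{P}(\mathcal{G})\geq 1-\delta$. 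Conditioned on $\mathcal{G}$, the Type-I rule discards only points of $\mathcal{S}$ that cannot be optimal for $f^{d-1}$ (by convexity), and the Type-II rule---triggered only when no pair in $\mathcal{S}$ can be separated---preserves an $\epsilon/2$-optimal point of $f^{d-1}$ on the active set by exactly the convex-interpolation argument used for Algorithm \ref{alg:one-dim-uni}, now applied to the $L^\natural$-convex marginal $f^{d-1}$. Combining this $\epsilon/2$ structural error with the $\epsilon/4$ confidence width of the final estimate gives $|\hat{f}^{d-1}(x^*)-f^*|<\epsilon$, so the empirical value $\hat{f}^{d-1}(x^*)$ serves as the required estimate of $f^*$.

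For the cost, in the phase with $N_{\mathrm{cur}}=N_i$ the algorithm issues at most $N_i$ queries to $\mathcal{A}$ at target width $h_i=N_i\epsilon/160$ and confidence $1-\delta/(2N)$. The sub-Gaussian hypothesis bounds each query by $c_1\cdot 2C/h_i^2\cdot\log(4N/\delta)+c_2'$ for an absolute constant $c_1$ and a $\delta$-independent overhead $c_2'$, so the phase total is $\tilde{O}[C/(N_i\epsilon^2)\cdot\log(4N/\delta)]$. Since $\{N_i\}$ at least halves at every update, $\sum_i 1/N_i$ is a geometric series bounded by an absolute constant, so the grand total is $\tilde{O}[C/\epsilon^2\cdot\log(4N/\delta)]$; splitting $\log(4N/\delta)=\log(4N)+\log(1/\delta)$, the $\log(1/\delta)$ term carries an absolute-constant multiple $M$ of $C/\epsilon^2$---which is exactly the claimed parameter $MC$---while $\log(4N)$ and all additive overhead are $\delta$-independent and fall into the $c_2$ slot of the $\tilde{O}$ notation.

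The main obstacle is verifying that this constant $M$ is genuinely absolute, i.e.\ independent of the dimension $d$. Because $\mathcal{A}$ is invoked only as a black box, $d$ enters the analysis solely through $C$ itself; but one must carefully check that the factor $160$ in $h_i$, the halving of $N_{\mathrm{cur}}$, the allocation of the $\delta/(2T_{\max})$ failure budget across the $O(\log N)$ re-sampling phases, and the $O(N)$ queries in the union bound all compose into an absolute $M$ without acquiring any hidden $d$- or $N$-dependence in the $\log(1/\delta)$ coefficient. Everything else (the Type-I/Type-II correctness and the geometric-sum cost telescoping) is a faithful transcription of the one-dimensional argument.
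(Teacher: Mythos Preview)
Your proposal is correct and follows essentially the same approach as the paper: reduce to the one-dimensional marginal $f^{d-1}$, treat $\mathcal{A}$ as a black-box sub-Gaussian oracle, union-bound over the $O(N)$ oracle calls, and telescope the per-phase cost via the geometric decay of $N_{\mathrm{cur}}$. The only point the paper makes explicit that you leave implicit is the verification that at every Type-II step the current half-width satisfies $h\leq|\mathcal{S}|\cdot\epsilon/80$ (needed so the one-dimensional Type-II argument from Theorem~\ref{thm:one-dim-uni} applies verbatim): since $N_{\mathrm{cur}}$ is refreshed only when $|\mathcal{S}|\leq N_{\mathrm{cur}}/2$, one always has $|\mathcal{S}|>N_{\mathrm{cur}}/2$ between refreshes, whence $h=N_{\mathrm{cur}}\epsilon/160<|\mathcal{S}|\epsilon/80$; the paper then computes the absolute constant explicitly as $M=17100$.
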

\begin{proof}{Proof of Theorem \ref{thm:multi-dim-uni}.}
The proof is provided in \ref{ec:multi-dim-uni}.
\hfill\Halmos\end{proof}

If we treat $F(x,\xi_x)$ as a sub-Gaussian algorithm with dimension $0$ and parameter $\sigma^2$, then Theorem \ref{thm:multi-dim-uni} implies that there exists a sub-Gaussian algorithm with dimension $1$ and parameter $\sigma^2M$. However, the parameter $C$ of Algorithm \ref{alg:one-dim-uni} is usually smaller than $\sigma^2M$ and therefore Algorithm \ref{alg:one-dim-uni} is preferred in the one-dimensional case. Using the results of Theorem \ref{thm:multi-dim-uni} and the fact that Algorithm \ref{alg:one-dim-uni} is sub-Gaussian with dimension $1$, we can inductively construct sub-Gaussian algorithms with any dimension $d$.
\revise{
\begin{theorem}\label{thm:multi-dim-uni-2}
Suppose that Assumptions \ref{asp:1}-\ref{asp:4} hold \revise{and Algorithm \ref{alg:one-dim-uni} is sub-Gaussian with dimension $1$ and parameter $C$}. There exists an $[(\epsilon,\delta)\text{-PGS},\mathcal{MC}]$-algorithm that is sub-Gaussian with parameter $M^{d-1}C$, where $M$ is the constant in Theorem \ref{thm:multi-dim-uni}. Hence, we have
\[ T(\epsilon,\delta,\mathcal{MC}) = \tilde{O}\left[ \frac{M^d}{\epsilon^2}\log\left(\frac{1}{\delta}\right) \right]. \]
Furthermore, by choosing $\epsilon=c/2$, it holds that
\[ T(\delta,\mathcal{MC}_c) = \tilde{O}\left[ \frac{M^d}{c^2}\log\left(\frac{1}{\delta}\right) \right]. \]
\end{theorem}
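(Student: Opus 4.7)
The plan is to prove the theorem by induction on the dimension $d$, using Theorem \ref{thm:one-dim-uni} as the base case and Theorem \ref{thm:multi-dim-uni} as the inductive engine.

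For the base case $d=1$, Theorem \ref{thm:one-dim-uni} guarantees that Algorithm \ref{alg:one-dim-uni} returns an $(\epsilon/2,\delta/2)$-PGS solution $\hat{x}$ at cost $\tilde{O}[\epsilon^{-2}\log(1/\delta)]$. To turn this into a sub-Gaussian \emph{estimator} of $f^{*}$, I would augment the algorithm with a short post-processing phase that draws additional simulation samples at $\hat{x}$ until the $1-\delta/2$ confidence half-width is at most $\epsilon/2$, which costs only another $\tilde{O}[\epsilon^{-2}\log(1/\delta)]$. On the joint event that PGS succeeds and the final confidence interval covers, the empirical mean at $\hat{x}$ is within $\epsilon$ of $f^{*}$ with probability at least $1-\delta$. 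Hence the augmented one-dimensional procedure is sub-Gaussian with dimension $1$ and some absolute constant parameter $C$.

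For the inductive step, suppose $\mathcal{A}_{d-1}$ is sub-Gaussian with dimension $d-1$ and parameter $M^{d-2}C$. Plugging $\mathcal{A}_{d-1}$ into Algorithm \ref{alg:multi-dim-uni} as the subroutine $\mathcal{A}$ and invoking Theorem \ref{thm:multi-dim-uni} directly yields a sub-Gaussian algorithm with dimension $d$ and parameter $M\cdot M^{d-2}C=M^{d-1}C$. Unfolding the definition of sub-Gaussianity then gives the stated expected simulation cost $\tilde{O}[M^{d}\epsilon^{-2}\log(1/\delta)]$. To convert this estimator of $f^{*}$ into a genuine $(\epsilon,\delta)$-PGS \emph{decision variable}, I would require each recursive call to return, alongside the scalar estimate of the marginal minimum $f^{d-1}(x)$, the explicit witness point from $[N]^{d-1}$ that it used to achieve that estimate. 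Concatenating the one-dimensional argmin $x^{*}$ in the outer loop of Algorithm \ref{alg:multi-dim-uni} with the witness $y^{*}\in[N]^{d-1}$ returned by $\mathcal{A}_{d-1}$ at $x^{*}$ yields a feasible solution whose true objective value lies within $\epsilon$ of $f^{*}$ with probability $\geq 1-\delta$, by the same good-event union bound already used inside Theorem \ref{thm:multi-dim-uni}.

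The PCS-IZ bound is a corollary: setting $\epsilon=c/2$ turns any $(\epsilon,\delta)$-PGS solution into a $(c,\delta)$-PCS-IZ solution, since under the indifference-zone gap $c$ the unique optimum is the only point with objective value within $c/2$ of $f^{*}$. The main obstacle is to verify that the multiplicative blow-up in the sub-Gaussian parameter at each recursion level is \emph{exactly} the absolute constant $M$, with no hidden polynomial factors in $d$, $N$, $\epsilon$, or $\delta$; this is the content of Theorem \ref{thm:multi-dim-uni}, but tracking that the confidence-width schedule $h=|\mathcal{S}|\cdot\epsilon/160$ together with the $1/k^{2}$-summability of per-level simulation budgets compounds cleanly across $d$ levels is what keeps the final bound independent of $N$ and $L$ and yields the $M^{d}$ scaling rather than something larger.
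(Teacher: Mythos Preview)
Your proposal is correct and follows essentially the same approach as the paper: the paper does not give an explicit proof of this theorem but simply remarks that it follows by induction, using the discussion after the definition of sub-Gaussian algorithms (which is precisely your base case, including the post-processing step of re-sampling at $\hat{x}$) and Theorem \ref{thm:multi-dim-uni} as the inductive step. Your observation that one must carry along a witness point $y^{*}\in[N]^{d-1}$ from each recursive call in order to output an actual PGS decision variable, rather than merely an estimate of $f^{*}$, is a detail the paper leaves implicit; it is a genuine (minor) addition on your part, and your final paragraph about the $M$-blow-up is not really an obstacle here since it is exactly the content of Theorem \ref{thm:multi-dim-uni}, which you are permitted to invoke as a black box.
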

}
We note that although the upper bound in Theorem \ref{thm:multi-dim-uni} is independent of the Lipschitz constant $L$ and independent of $N$ when $\delta \ll 1$, the dependence on $d$ is exponential. Hence, Algorithm \ref{alg:multi-dim-uni} is largely theoretical and only suitable for low-dimensional problems, e.g., problems with $d\leq3$. On the other hand, if the dimension $d$ is treated as a fixed constant, Algorithm \ref{alg:multi-dim-uni} attains the optimal asymptotic performance under the asymptotic criterion in \citet{kaufmann2016complexity}. 
We also mention that Algorithm \ref{alg:multi-dim-uni} does not make a full use of the properties of $L^\natural$-convex functions. Actually, Algorithm \ref{alg:multi-dim-uni} is an $(\epsilon,\delta)$-PGS algorithm for those functions that are convex in each direction.

\subsection{Stochastic Cutting-plane Methods: Stochastic Separation Oracles}
\label{sec:multi-weak}

Now, we consider designing simulation-optimization algorithms with simulation costs having a polynomial dependence on the problem parameters $d$ and $N$. In addition, we reiterate that the goal is to design algorithms that do not require the information about the Lipschitz constant $L$ and the simulation cost is upper bounded by a constant that is independent of $L$. Intuitively, the subgradient information is useful for high-dimensional problems, while the localization operation is good at utilizing the discrete nature of the problem and get rid of the dependence on the Lipschitz constant. Therefore, one may expect subgradient-based localization methods to satisfy the aforementioned requirements. 
Using the definitions and tools introduced in Section \ref{sec:Lnatural}, we are able to design the desired algorithm in two steps. In this subsection, we first introduce the definition of stochastic separation oracles and give a novel framework to design stochastic cutting-plane methods via deterministic cutting-plane methods. Straightforward extensions of deterministic cutting-plane methods require prior knowledge about $L$ and the simulation cost has a logarithmic dependence on $L$. Hence, the following assumption is required.
%
\begin{assumption}\label{asp:5}
The $\ell_\infty$-Lipschitz constant $L$ is known a priori. Namely, we have
\[ |f(x) - f(y)| \leq L,\quad\forall x,y\in\mathcal{X},\quad\mathrm{s.t.}~\|x-y\|_\infty \leq 1.  \]
\end{assumption}
%
%
In the next subsection, we incorporate the stochastic cutting-plane methods with the dimension reduction operation. The resulting algorithm, named as the dimension reduction algorithm, does not require prior information about $L$ and the simulation cost is upper bounded by a constant that is independent of $L$.
We note that the design of the dimension reduction algorithm is the main objective of this section and stochastic cutting-plane methods mainly serve as an example of our novel framework.
%


In each iteration of a cutting-plane algorithm, a cutting hyperplane is generated to shrink the subset of potentially optimal choices of decision variables. In other words, the cutting hyperplane is used to localize the optimal solution. When the volume is small enough, the Lipschitz continuity implies that the all points in the polytope have their objective values close to the optimal value.
Compared to subgradient-based search methods, cutting-plane methods are more sensitive to noise. Hence, more simulation runs are required to generate robust separation oracles and therefore the simulation cost has a higher-order dependence on the problem dimension compared to subgradient-based search methods. 
As a counterpart of separation oracles, we introduce the stochastic separation oracle, named as the ($\epsilon,\delta$)-separation oracle, to characterize the accuracy of separation oracles in the stochastic case.
\begin{definition}
A \textbf{($\epsilon,\delta$)-separation oracle (($\epsilon,\delta$)-$\mathcal{SO}$)} is a function on $[1,N]^d$ with the property that for any input $x\in[1,N]^d$, it outputs a stochastic vector $\hat{g}_x\in\mathbb{R}^d$ such that the inequality
\[ f(y) \geq f(x) - \epsilon,\quad \forall y\in [1,N]^d \cap H  \]
holds with probability at least $1-\delta$, where the half space $H$ is defined as $\{ z: \langle \hat{g}_x,z-x \rangle \geq 0 \}$.
\end{definition}

Before we state algorithms, we give a concrete example of $(\epsilon,\delta)$-$\mathcal{SO}$ oracles and provide an upper bound on the expected simulation cost of evaluating each oracle. We define the averaged subgradient estimator as
\begin{align}\label{eqn:stochastic-subgrad1}
    \hat{g}^n_{\alpha_x(i)} := \hat{F}_n\left(S^{x,i}\right) - \hat{F}_n\left(S^{x,i-1}\right),\quad\forall i\in[d],
\end{align}
where $\alpha_x$ is a consistent permutation of $x$, $n \geq 1$ is the number of samples, and $\hat{F}_n$ is the empirical mean of $n$ independent evaluations of $F$. The following lemma gives a lower bound on $n$ to guarantee that $\hat{g}^n$ is an $(\epsilon,\delta)$-$\mathcal{SO}$ oracle.
\revise{
\begin{lemma}\label{lem:weak-1}
Suppose that Assumptions \ref{asp:1}-\ref{asp:4} hold. If we choose $n$ such that
\[ n = {\Theta}\left[ \frac{d N^2}{\epsilon^2}\log\left( \frac{1}{\delta} \right) \right], \]
then $\hat{g}^n$ is an $(\epsilon,\delta)$-$\mathcal{SO}$ oracle. Moreover, the expected simulation cost of generating an $(\epsilon,\delta)$-$\mathcal{SO}$ oracle is at most
\[ O\left[ \frac{d^2N^2}{\epsilon^2}\log\left( \frac{1}{\delta} \right) + d\right] = \tilde{O}\left[ \frac{d^2N^2}{\epsilon^2}\log\left( \frac{1}{\delta} \right) \right]. \]
\end{lemma}
}
\begin{proof}{Proof of Lemma \ref{lem:weak-1}.}
The proof is provided in \ref{ec:weak-1}.
\hfill\Halmos\end{proof}

We note that the condition in Lemma \ref{lem:weak-1} provides a sufficient condition of the $\mathcal{SO}$ oracle. In practice, the value of $n$ can be much smaller than the bound in Lemma \ref{lem:weak-1}; see numerical examples in Section \ref{sec:numerical}.
To show the usefulness of the stochastic separation oracle, we extend Vaidya's cutting-plane method \citep{vaidya1996new} to a stochastic cutting-plane method that can find PGS solutions in the stochastic case. Vaidya's cutting-plane method maintains a polytope that contains the optimal points and iteratively reduces the volume of polytope by generating a separation oracle at the approximate volumetric center. We provide the pseudo-code of deterministic Vaidya's method in \ref{ec:cuttingplane} for the self-contained purpose. Other deterministic cutting-plane methods based on reducing the volume of a polytope can also be extended to the stochastic case using our novel framework, and we consider Vaidya's method mainly for its simplicity. 

It is desirable to prove that by substituting the separation oracles with stochastic separation oracles, Vaidya's cutting-plane method can be used to find PGS solutions. The pseudo-code of the stochastic cutting-plane method is given in Algorithm \ref{alg:multi-dim-weak}.
\bigskip
\begin{breakablealgorithm}
\caption{Stochastic cutting-plane method for the PGS guarantee}
\label{alg:multi-dim-weak}
\begin{algorithmic}[1]
\Require{Model $\mathcal{X},(\mathsf{Y},\mathcal{B}_\mathsf{Y}),F(x,\xi_x)$, optimality guarantee parameters $\epsilon$ and $\delta$, Lipschitz constant $L$, $(\epsilon,\delta)$-$\mathcal{SO}$ oracle $\hat{g}$.}
\Ensure{An $(\epsilon,\delta)$-PGS solution $x^*$ to problem \eqref{eqn:obj}.}
\State Set the initial polytope $P \leftarrow [1,N]^d$.
\State \revise{Set the constant $\rho \leftarrow 10^{-7}$. \Comment{Constant $\rho$ corresponds to $\epsilon$ in \citet{vaidya1996new}.}}
\State \revise{Set the number of iterations $T_{max}\leftarrow \lceil 2d / \rho \cdot \log[d N L / (\rho\epsilon)] \rceil$.}
\State Initialize the set of points used to query separation oracles $\mathcal{S} \leftarrow \emptyset$.
\State Initialize the volumetric center $z \leftarrow (N+1)/2 \cdot (1,1,\dots,1)^T$.
\For{$T=1,2,\dots,T_{max}$}
    \State Decide adding or removing a cutting plane by Vaidya's method.
    \If{add a cutting plane}
        \State Evaluate an $(\epsilon/8,\delta/4)$-$\mathcal{SO}$ oracle $\hat{g}_z$ at $z$.
        \If{$\hat{g}_z=0$}
            \State Round $z$ to an integral solution by Algorithm \ref{alg:multi-dim-round} and return the rounded solution.
        \EndIf
        \State Add the current point $z$ to $\mathcal{S}$.
    \ElsIf{remove a cutting plane}
        \State Remove corresponding point $z$ from $\mathcal{S}$.
    \EndIf
    \State Update the approximate volumetric center $z$ by a Newton-type method.
\EndFor \Comment{There are at most $O(d)$ points in $\mathcal{S}$ by Vaidya's method.}
\State Find an $(\epsilon/4,\delta/4)$-PGS solution $\hat{x}$ of problem $\min_{x\in\mathcal{S}}~ f(x)$.
\State Round $\hat{x}$ to an integral solution by Algorithm \ref{alg:multi-dim-round}.
\end{algorithmic}
\end{breakablealgorithm}
\bigskip
We note that
if the approximate volumetric center $z$ is not in $[1,N]^d$, then we choose a violated constraint $x_i \geq 1$ or $x_i \leq N$ and return $e_i$ or $-e_i$ as the separating vector, respectively. For arithmetic operations, each iteration of Algorithm \ref{alg:multi-dim-weak} requires $O(d)$ inversions and multiplications of $d\times d$ matrices. Each inversion and multiplication can be finished within $O(d^\omega)$ arithmetic operations, where $\omega < 2.373$ is the matrix exponent \citep{alman2020refined}. Hence, Algorithm \ref{alg:multi-dim-weak} needs $O(d^{\omega+1})$ arithmetic operations for each iteration. The calculation of the number of iterations $T_{max}$ is provided in \ref{ec:thm-weak-1}. The correctness and the expected simulation cost of Algorithm \ref{alg:multi-dim-weak} are studied in the following theorem.
\begin{theorem}\label{thm:weak-1}
Suppose that Assumptions \ref{asp:1}-\ref{asp:5} hold. Algorithm \ref{alg:multi-dim-weak} returns an $(\epsilon,\delta)$-PGS solution and we have
\[ T(\epsilon,\delta,\mathcal{MC}) = O\left[ \frac{d^3N^2}{\epsilon^2}\log\left(\frac{dLN}{\epsilon}\right)\log\left(\frac{1}{\delta}\right) + d^2\log\left(\frac{dLN}{\epsilon}\right) \right] = \tilde{O}\left[ \frac{d^3N^2}{\epsilon^2}\log\left(\frac{dLN}{\epsilon}\right)\log\left(\frac{1}{\delta}\right) \right]. \]
\end{theorem}
\begin{proof}{Proof of Theorem \ref{thm:weak-1}.}
The proof is provided in \ref{ec:thm-weak-1}.
\hfill\Halmos\end{proof}
\begin{remark}\label{rmk:3}
We note that another popular deterministic cutting-plane method, the random walk method \citep{bertsimas2004solving}, can also be extended to the stochastic case and achieves a better expected simulation cost
\[ \tilde{O}\left[ \frac{d^3N^2}{\epsilon^2}\log\left(\frac{LN}{\epsilon}\right)\log\left(\frac{1}{\delta}\right) \right] \]
at the expense of $\tilde{O}[d^6+\log^2(1/\delta)]$ arithmetic operations in each iteration. \revise{We provide the pseudo-code in \ref{ec:cuttingplane} for the self-contained purpose.} Here, the $O[\log^2(1/\delta)]$ factor is required to ensure the high-probability approximation to the centroid. Moreover, we note that the fast implementation of Vaidya's method in~\citet{jiang2020improved} reduces number of arithmetic operations in each iteration to $O(d^2)$.
\end{remark} 
\begin{remark}
Stochastic cutting-plane methods can also be applied to problems that are defined on $[N]^d$ with linear constraints $\{x\in\mathbb{Z}^d:Ax\leq b\}$, since we can choose the initial polytope to be $\mathcal{X}:=[1,N]^d \cap \{ Ax \leq b \}$. The results in this section still hold if we replace $N$ with $\max_{x,y\in\mathcal{X}}\|x-y\|_\infty$.
\end{remark}

\subsection{Stochastic Cutting-plane Methods: Dimension Reduction Algorithm}
\label{sec:multi-strong}

In this subsection, we develop the dimension reduction algorithm, which does not require the knowledge about the Lipschitz constant $L$ and whose simulation cost is upper bounded by a constant that is independent of $L$. 
The idea behind the dimension reduction algorithm is based on the following observation: if a convex body $P\subset\mathbb{R}^d$ has a volume \revise{$\mathrm{vol}(P)$} smaller than $(d!)^{-1}=O[\exp(-(d+1/2)\log(d)+d)]$, then all integral points inside $P$ must lie on a hyperplane. Otherwise, if there exist $d+1$ integral points $x_0,\dots,x_d \in P$ that are not on the same hyperplane, then the convex body $P$ contains the polytope $\mathrm{conv}\{x_0,\dots,x_d\}$, which has the volume
\[ \frac{1}{d!}\left| \mathrm{det}(x_1-x_0,\dots,x_d-x_0) \right| \geq \frac{1}{d!}, \]
where $\mathrm{conv}(\cdot)$ is the convex hull and $\mathrm{det}(\cdot)$ is the determinant of matrices. This leads to a contradiction \revise{since we assume that $\mathrm{vol}(P) < (d!)^{-1}$}. Hence, we may use Vaidya's method or the random walk method to reduce the volume of the search polytope $P$ to $O[\exp(-(d+1/2)\log(d)+d)]$, and then we reduce the problem dimension by projecting the polytope onto the hyperplane that all remaining points lie on. After $d-1$ dimension reductions, we have an one-dimensional convex problem and algorithms in Section \ref{sec:one-dim} can be applied. This idea is summarized in Algorithm \ref{alg:multi-dim-strong}.
\bigskip
\begin{breakablealgorithm}
\caption{Dimension reduction algorithm for the PGS guarantee}
\label{alg:multi-dim-strong}
\begin{algorithmic}[1]
\Require{Model $\mathcal{X},(\mathsf{Y},\mathcal{B}_\mathsf{Y}),F(x,\xi_x)$, optimality guarantee parameters $\epsilon$ and $\delta$, $(\epsilon,\delta)$-$\mathcal{SO}$ oracle $\hat{g}$.}
\Ensure{An $(\epsilon,\delta)$-PGS solution $x^*$ to problem \eqref{eqn:obj}.}
\State Set the initial polytope $P \leftarrow [1,N]^d$.
\State Initialize the set of points used to query separation oracles $\mathcal{S} \leftarrow \emptyset$.
\For{$d'=d,d-1,\dots,2$} \Comment{The current dimension $d'$ is gradually reduced.}
    \State Initialize Vaidya's cutting-plane method.
    \While{the volume of $P$ is larger than $(d'!)^{-1}$}
        \State Take one step of Vaidya's cutting-plane method with $(\epsilon/4,\delta/4)$-$\mathcal{SO}$ oracle.
        \Statex \Comment{Vaidya's cutting-plane method decides a suitable cutting plane $H$.}
        \State Add the point where the stochastic separation oracle is called to $\mathcal{S}$.
        \State Shrink the volume of $P$ using the cutting plane $H$.
    \EndWhile
    \State Find the hyperplane $H$ that contains all integral points in $P$.
    \Statex \Comment{If $P$ contains no integral points, then an arbitrary hyperplane works.}
    \State Project $P$ onto the hyperplane $H$. \Comment{Reduce the dimension by $1$.}
\EndFor
\State Find an $(\epsilon/4,\delta/4)$-PGS solution of the last one-dim problem and add the solution to $\mathcal{S}$.
\State Find the $(\epsilon/4,\delta/4)$-PGS solution $\hat{x}$ of problem $\min_{x\in\mathcal{S}}~ f(x)$.
\State Round $\hat{x}$ to an integral solution by Algorithm \ref{alg:multi-dim-round}.
\end{algorithmic}
\end{breakablealgorithm}
\bigskip

We note that the application of Vaidya's method in line 6 refers to implementing the cutting-plane algorithm for one iteration. Namely, only a single cutting hyperplane will be generated. Importantly, the implementation of Vaidya's method in this step does not require the knowledge about the Lipschitz constant, since the Lipschitz constant is only used to calculate the total number of steps in Algorithm \ref{alg:multi-dim-weak}. In addition, Vaidya's cutting-plane method can be replaced with other deterministic cutting-plane methods.
To make Algorithm \ref{alg:multi-dim-strong} more practical, we need to consider the following question:
\begin{itemize}
    \item How many arithmetic operations are required to identify the hyperplane given that the volume of $P$ is small enough?
\end{itemize}
%
%
The total number of arithmetic operations in each iteration is mainly determined by the answer to this question, since it is easy to show that other parts of the algorithm require only polynomially many arithmetic operations. Intuitively, the problem of identifying the hyperplane can be finished by finding a vector $c\in\mathbb{Z}^d$ such that
\[ \langle c, x - y\rangle \approx 0,\quad \forall x,y \in P, \]
where $P$ is the current polytope. In \citet{jiang2020minimizing}, the author reduced the problem to the Shortest Vector Problem in lattices and showed that the LLL algorithm~\citep{lenstra1982factoring} can be applied to find a set of LLL-reduced basis~\citep{lenstra1982factoring}, which contains the normal vector of the hyperplane when the volume of search set is small enough. We show that their results can be extended to the stochastic case and can be combined with the framework in Section \ref{sec:multi-weak} to generate the desired dimension reduction algorithm. Intuitively, the dimension reduction algorithm implements the stochastic cutting-plane method at each dimension from $d$ to $1$. 
Therefore, the total simulation cost is on the same order as the summation of $i^3$ for $i\in[d]$, which is on the order of $O(d^4)$. More rigorously, we provide the correctness and the simulation cost of Algorithm \ref{alg:multi-dim-strong} in the following theorem.
\begin{theorem}\label{thm:strong-1}
Suppose that Assumptions \ref{asp:1}-\ref{asp:4} hold. Algorithm \ref{alg:multi-dim-strong} returns an $(\epsilon,\delta)$-PGS solution and we have
\[ T(\epsilon,\delta,\mathcal{MC}) = O\left[ \frac{d^3N^2(d+\log(N))}{\epsilon^2}\log\left(\frac{1}{\delta}\right) + d^2(d+\log(N)) \right] = \tilde{O}\left[ \frac{d^3N^2(d+\log(N))}{\epsilon^2}\log\left(\frac{1}{\delta}\right) \right]. \]
\end{theorem}
\begin{proof}{Proof of Theorem \ref{thm:strong-1}.}
The proof is provided in \ref{ec:thm-strong-1}.
\hfill\Halmos\end{proof}
We note that the idea of gradually reducing the dimension is proposed in our work and \citet{jiang2020minimizing} independently, although the author of \citet{jiang2020minimizing} has made the algorithm more practical. 
More specifically, if we allow exponentially many arithmetic operations, the LLL algorithm is not necessary. In that case, we can reduce the number of separation oracles to $O(d^2)$ and the computational complexity can be reduced to $\tilde{O}[d^4N^2\epsilon^{-2}\log(1/\delta)]$.


\section{Adaptive Sub-Gaussian Parameter Estimator} \label{sec:var}

In this section, we provide a simple adaptive mean estimator to adaptively estimate the variance of each choice of decision variable under the assumption that the distribution of the randomness is Gaussian. The estimator can be used to further enhance our proposed algorithm and we hope the procedure to be useful for other optimization via simulation problems and algorithms that do not know the variances a priori. Using the adaptive estimator, the prior knowledge about the upper bound on the variance $\sigma^2$ is not necessary. In addition, for the multi-dimensional localization algorithms proposed in this work, the simulation cost for the unknown variance case is at most a constant factor larger than the case when an upper bound on the variance is known a priori. Therefore, the algorithm using the adaptive estimator, or the adaptive algorithm, is able to improve the performance of our proposed algorithms if an estimate of the upper bound $\sigma^2$ is much larger than the true variance. In this case, the original algorithms will implement an unnecessarily large number of simulation runs to shrink the confidence interval, while the adaptive algorithm is able to automatically learn the true variance and thus save the computational cost. Another situation where the adaptive algorithm is useful is when the variance of the system varies a lot at different choices of decision variable. In this case, the upper bound of the variance is usually attained at extremely choices of decision variable and is much larger than the variance of a majority of feasible choices. For example, we consider the case when the noise is multiplicative and Gaussian. Namely, the noisy evaluation is $F(x,\xi) = \eta(\xi) \cdot f(x)$ for all $x\in\mathcal{X}$, where $\eta(\xi)$ obeys the distribution $\mathcal{N}(1, \sigma^2)$. In this example, the tightest upper bound of the variance is $\sigma^2 \max_x f^2(x)$, which is much larger than the variance at points $x^0$ such that $f(x^0) \ll \max_x f(x)$. Therefore, using the upper bound at all points leads to a conservative mean estimator. Finally, we note that the adaptive algorithm also provides an explicit way to utilize the information about the variance $\sigma^2_x$ at each point $x\in\mathcal{X}$. Here, $\sigma^2_x$ refers to an upper bound on the variance at point $x$.

We now state the proposed adaptive mean estimator. To increase the generality of our results, we make a weaker assumption than the Gaussian case.
\begin{assumption}\label{asp:7}
The distribution of $F(x,\xi_x) - f(x)$ belongs to the family of sub-Gaussian distributions $\mathcal{F}_\kappa$, where $\kappa>0$ is a known constant. For any random variable $X$ whose distribution belongs to $\mathcal{F}_\kappa$, it holds that
%
\begin{align}\label{eqn:ada-1} 
\kappa \sigma_X^2 \leq  \mathrm{Var}(X),
\end{align}
where $\sigma_X^2$ is the sub-Gaussian parameter of the distribution.
\end{assumption}
We note that the inverse inequality $\mathrm{Var}(F(x, \xi_x)) \leq \sigma_x^2$ always holds for all sub-Gaussian distributions. However, there does not exist a universal constant $\kappa>0$ such that inequality \eqref{eqn:ada-1} holds for all sub-Gaussian distributions. Therefore, Assumption \ref{asp:7} cannot be implied by Assumption \ref{asp:3}. In the special case when the distribution of $F(x,\xi_x)$ is Gaussian, the constant $\kappa = 1$, i.e., we have the following relation:
\[ \sigma_X^2 = \mathrm{Var}(X). \]
Therefore, Assumption \ref{asp:7} includes the Gaussian distribution as a special case. Under the above assumption, we propose the adaptive mean estimator.
\begin{definition}
\label{def:ada}
Let $\epsilon>0$ be the precision and $\delta\in(0,1]$ be the failing probability. We construct the adaptive mean estimator of $f(x)$ in two steps:
\begin{enumerate}
    \item Sample $2n$ independent evaluations $F(x,\xi_i)$ for $i\in[2n]$, where $n:= \lceil 256\kappa^{-2}\log(2/\delta) \rceil$. Compute the variance estimator
    \[ \hat{\mathrm{Var}} := \frac{1}{n} \sum_{i=1}^n \left[F(x, \xi_{2i-1}) - F(x,\xi_{2i})\right]^2 \]
    and the parameter estimator
    \[ \hat{\sigma}^2 := \frac{1}{\kappa}\hat{\mathrm{Var}}. \]
    \item Let $m := \max\{\lceil 2\epsilon^{-2}\hat{\sigma}^2\log(2/\delta) \rceil, 2n\}$ and sample $m-2n$ independent evaluations $F(x,\xi_{2n+i})$ for $i\in[m-2n]$ and compute the empirical mean
    \[ \hat{F}(x; \delta) := \frac{1}{m} \sum_{i=1}^{m} F(x,\xi_i). \]
\end{enumerate}
\end{definition}
The construction of the adaptive mean estimator has two steps. In the first step, we estimate an upper bound for the sub-Gaussian parameter, and in the second step, we use the estimated upper bound to calculate the required number of simulation so that the sub-Gaussian parameter is less than a known constant. We note that the adaptive mean estimator is an online estimator. To be more concrete, if a smaller precision $\epsilon'<\epsilon$ is required, it suffices to add
\[ \lceil 2(\epsilon')^{-2}\hat{\sigma}^2\log(2/\delta) \rceil - \lceil 2\epsilon^{-2}\hat{\sigma}^2\log(2/\delta) \rceil \]
more evaluations into the empirical mean in step 2.
The following theorem verifies that $\hat{F}(\cdot;\delta)$ is an unbiased mean estimator for $f(x)$ and its tail is sub-Gaussian with a small failing probability.
\begin{theorem}
\label{thm:ada-1}
Suppose that Assumption \ref{asp:7} holds. Let $\delta\in(0,1]$ be the failing probability. For all $\epsilon\geq0$, the adaptive mean estimator satisfies
\begin{align}\label{eqn:ada-0}
\mathbb{P}\left[ |\hat{F}(x; \delta) - f(x)| \geq \epsilon \right] \leq \delta, 
\end{align}
%
In addition, the expected simulation cost of the adaptive mean estimator is $O[(\kappa^{-2}+\epsilon^{-2}\kappa^{-1}\sigma_x^2)\log(1/\delta)]$.
\end{theorem}
\begin{proof}{Proof of Theorem \ref{thm:ada-1}.}
The proof is provided in \ref{ec:thm-ada-1}.
\hfill\Halmos\end{proof}
If the sub-Gaussian parameter $\sigma_x^2$ is known, the Hoeffding bound shows that
\[ O[ \epsilon^{-2}\sigma_x^2\log(1/\delta) ] \]
samples are sufficient to generate an estimator for inequality \eqref{eqn:ada-0}. Therefore, the relative efficiency of the adaptive mean estimator is
\[ \frac{\epsilon^{-2}\sigma_x^2}{\kappa^{-2}+\epsilon^{-2}\kappa^{-1}\sigma_x^2} = \frac{1}{\kappa^{-1} + \kappa^{-2}\epsilon^2\sigma_x^{-2} }. \]
If the precision $\epsilon$ is small or the parameter $\sigma_x^2$ is large, the adaptive mean estimator is only a constant ($\kappa$) time less efficient than the known variance case.

Now, we estimate the expected simulation cost of our proposed simulation-optimization algorithms combined with the adaptive estimator. Intuitively, we need to implement the first step in Definition \ref{def:ada} once for all simulated choices of decision variable. Suppose that a simulation-optimization algorithm simulates $N(\epsilon,\delta)$ different choices of decision variable in expectation and the expected simulation cost is $T(\epsilon,\delta)$. Then, the expected simulation cost of the adaptive simulation-optimization algorithm is
\[ O\left[ \kappa^{-1}T(\epsilon,\delta) + \kappa^{-2} N(\epsilon,\delta) \log\left( N(\epsilon,\delta) / \delta \right) \right]. \]
%
For the localization algorithms, we usually have $T(\epsilon,\delta) = O[ N(\epsilon,\delta)\log\left( N(\epsilon,\delta) / \delta \right)]$. Therefore, the expected simulation cost of the adaptive algorithm is $O(T(\epsilon,\delta))$. More concretely, we have the following corollary.
\begin{corollary}
Suppose that Assumptions \ref{asp:1}, \ref{asp:4}-\ref{asp:7} hold. The following estimates hold:
\begin{itemize}
    \item The expected simulation cost of adaptive tri-section sampling algorithm (Algorithm \ref{alg:one-dim}) is 
    \[ O\left[ (1 + \epsilon^{-2})\log(N)\log\left( \frac{\log(N)}{\delta} \right) + \log(N) \right] = \tilde{O}\left[ (1 + \epsilon^{-2})\log(N)\log\left( \frac{1}{\delta} \right) \right]. \]
    \item The expected simulation cost of adaptive shrinking uniform sampling algorithm (Algorithm \ref{alg:one-dim-uni}) is 
    \[ O\left[ (N + \epsilon^{-2})\log\left( \frac{N}{\delta} \right) \right] = \tilde{O}\left[ (N + \epsilon^{-2})\log\left( \frac{1}{\delta} \right) \right]. \]
    \item The expected simulation cost of adaptive stochastic cutting-plane algorithm (Algorithm \ref{alg:multi-dim-weak}) is 
    \[ O\left[ \frac{d^3N^2}{\epsilon^2}\log\left(\frac{dLN}{\epsilon}\right)\log\left(\frac{1}{\delta}\right) + d^2\log\left(\frac{dLN}{\epsilon}\right) \right] = \tilde{O}\left[ \frac{d^3N^2}{\epsilon^2}\log\left(\frac{dLN}{\epsilon}\right)\log\left(\frac{1}{\delta}\right) \right]. \]
    \item The expected simulation cost of adaptive dimension reduction algorithm (Algorithm \ref{alg:multi-dim-strong}) is 
    \[ O\left[ \frac{d^3N^2(d+\log(N))}{\epsilon^2}\log\left(\frac{1}{\delta}\right) + d^2(d+\log(N)) \right] = \tilde{O}\left[ \frac{d^3N^2(d+\log(N))}{\epsilon^2}\log\left(\frac{1}{\delta}\right) \right]. \]
\end{itemize}
\end{corollary}
Here, constants $\sigma_x^2, \kappa$ are omitted in the $O(\cdot)$ and $\tilde{O}(\cdot)$ notations. We can see that the expected simulation cost of the stochastic cutting-plane method and the dimension reduction algorithm is only increased by a factor. Therefore, the adaptive mean estimator is useful in dropping the requirement of known parameter $\sigma$ for the multi-dimensional case. For the one-dimensional case, the expected simulation cost of the tri-section algorithm is also increased by a constant factor. On the other hand, the cost of the adaptive shrinking uniform sampling algorithm is larger than the original version, especially when $\epsilon^{-2} \ll N$. Therefore, in the one-dimensional unknown parameter case, we need to estimate the size of $\epsilon$ to decide whether to use the tri-section sampling algorithm or the shrinking uniform sampling algorithm, More specifically, if $\epsilon = O(\sqrt{\log{N}/N})$, then the shrinking uniform sampling algorithm is preferred; otherwise the tri-section sampling algorithm is preferred.

\section{Numerical Experiments}
\label{sec:numerical}

In this section, we implement our proposed simulation-optimization algorithms that are guaranteed to find high-confidence high-precision PGS solutions. \revisee{Through these numerical experiments, we show that the localization methods proposed in this manuscript outperform benchmark algorithms on large-scale problems.}
First, we consider the problem of finding the optimal allocation of a total number of $N$ staffs to two queues so that the average waiting time for all of the arrivals from the two queues is minimized. Given the optimality parameters $\epsilon$ and $\delta$, we empirically show that the tri-section sampling algorithm and the shrinking uniform sampling algorithm have respectively $O(\log N)$ and $O(1)$ dependence on the scale $N$, which supports our theoretical results. In addition, we construct a synthetic one-dimensional convex function with a similar landscape to show that the returned solution satisfies the high-probability guarantee. 
Second, we construct a multi-dimensional stochastic function, whose expectation is a separable convex function, i.e., functions of the form $f(x)=\sum_{i=1}^{d} f^i(x_i)$ for convex functions $f^1(x),\dots,f^d(x)$, to test and compare the subgradient descent algorithm \citep{zhang2020discrete} with the stochastic localization methods proposed in this work for different values of the scale $N$ and dimension $d$, especially for large $N$.
Similar to the one-dimensional case, we consider functions with a closed-form to check the coverage rate of the proposed algorithms. 
Finally, the multi-dimensional resource allocation problem in service systems is considered to compare the performance of proposed algorithms on practical problems.

\subsection{Staffing Two Queues under Resource Constraints}

Consider a service system that operates over a time horizon $[0,T]$ with two streams of customers arriving at the system. One example is that the system receives service requests from both online app-based customers and offline walk-in customers, and each stream needs dedicated servers assigned. The first stream of customers arrives according to a doubly stochastic non-homogeneous Poisson process $N_1 := (N_1(t):t\in [0,T])$, with the customer service times being independent and identically distributed according to a distribution $S_1$. The second stream of customers obeys the same model with the process $N_2 := (N_2(t):t\in [0,T])$ and distribution $S_2$.
The two streams of customers form two separate queues and their arrival processes can be correlated. Suppose that the decision maker needs to staff the two queues separately. There are in total a number of $N+1$ homogeneous servers that work independently in parallel. Each server can handle the service requested by customers from either stream, one at a time. 
Suppose that no change on the staffing plan can be made once the system starts working. Assume that the system operates based on a first-come-first-serve routine, with unlimited waiting room in each queue, and that customers never abandon.  

The decision maker's objective is to select the staffing level $x \in[N]$ for the first queue and the staffing level $N+1-x$ for the second queue, in order to minimize the expected average waiting time for all customers from the two streams over the time horizon $[0,T]$. In the numerical example, we consider $N\in\{10,20,\dots,150\}$ and $T = 2$. The arrival processes $N_1$ and $N_2$ are non-homogeneous processes with random intensity functions $\Gamma_1\cdot \lambda_1(t)$ and $\Gamma_2 \cdot \lambda_2(t)$, in which
\[
\lambda_1(t) := 75 + 25 \sin(0.3t),\quad \lambda_2(t) := 80 + 40\sin(0.2t).
\]
Positive-valued random variables $\Gamma_1$ and $\Gamma_2$ are defined as
\[ \Gamma_1 := X + Z, \quad \Gamma_2 := Y - Z, \]
where $X,Y$ are independent uniform random variables on $[0.75,1.25]$ and $Z$ is an independent uniform random variable on $[-0.5,0.5]$. The service time distribution $S_1$ is log-normal distributed with mean $0.75$ and variance $0.1$. The service time distribution $S_2$ is gamma distributed with mean $0.65$ and variance $0.1$. 
%
%
Figure~\ref{fig:landscape} plots an empirical average waiting time as a function of the discrete decision variable $x$. It can be observed that the landscape around the optimum is extremely flat and such property may cause challenges for algorithms that aim to exactly select the optimal solution (i.e., the PCS guarantee). In practice, the decision maker may be indifferent about a very small difference in the averaging waiting time performance, when the small difference does not impact much on customers satisfaction.
Instead, algorithms that are designed for the ($\epsilon,\delta$)-PGS guarantee do not suffer from the extremely flat landscape around the global optimum.

Moreover, we construct the convex objective function
\[ f(x;c,x^*) := \begin{cases}
c\left(\sqrt{\frac{x^*}{x}} - 1\right) & \text{if } x \leq x^*\\ c\left(\sqrt{\frac{N+1-x^*}{N+1-x}} - 1\right) & \text{if } x > x^*
\end{cases} ,\quad \forall x,x^* \in [N],
\]
where $c\in[0.75,1.25]$ and $x^* \in\{1,\dots,\lfloor0.3N\rfloor\}$. The objective function has a similar landscape as the average waiting time; see Figure \ref{fig:landscape}. We use this closed-form function to verify that the $\epsilon$-optimality is satisfied with high probability.

\begin{figure}[t]
\begin{center}
\begin{subfigure}{.49\textwidth}
    \centering
    \includegraphics[scale=0.5]{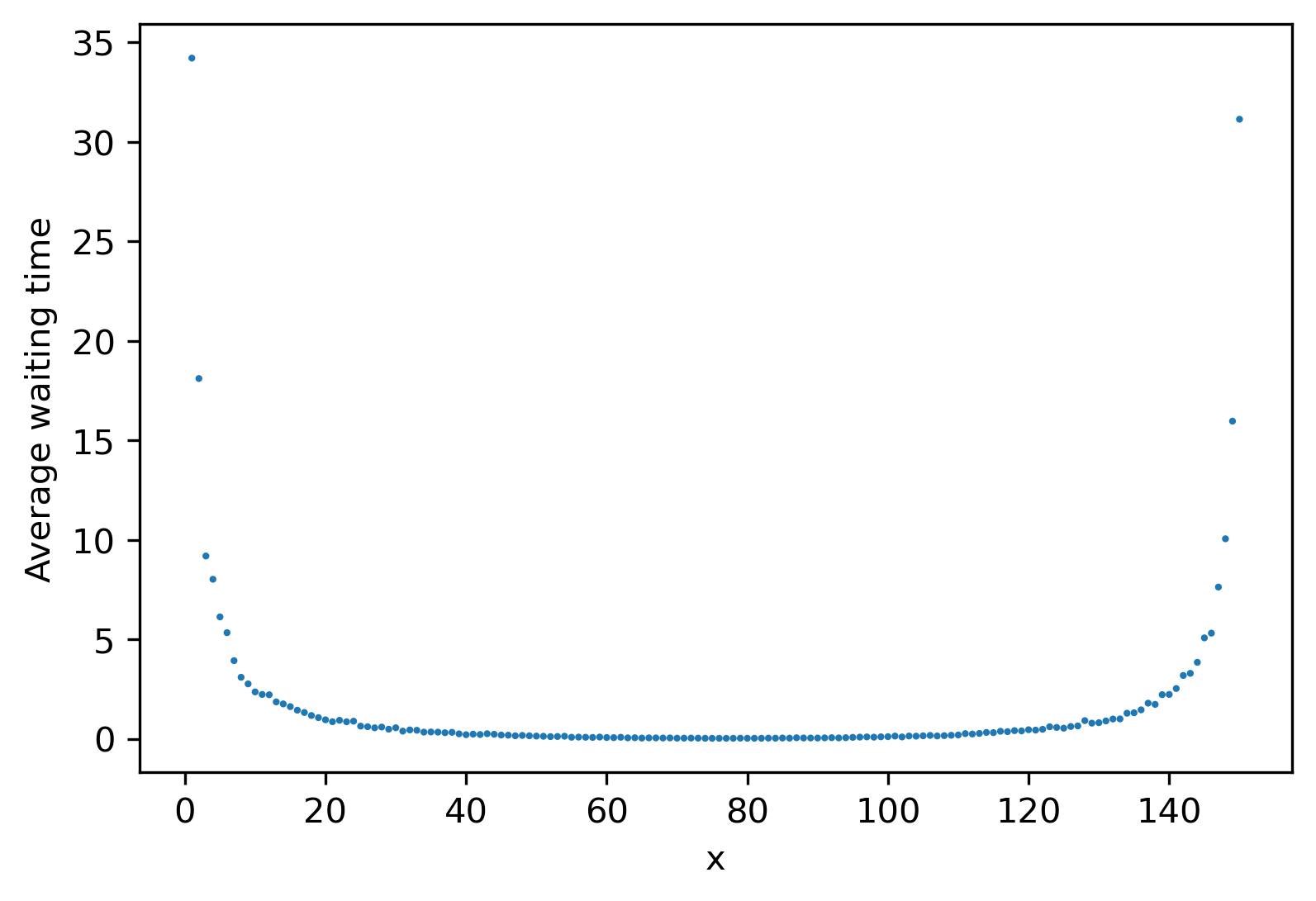}
    \caption{}
\end{subfigure}\hfill
\begin{subfigure}{.49\textwidth}
    \centering
    \includegraphics[scale=0.5]{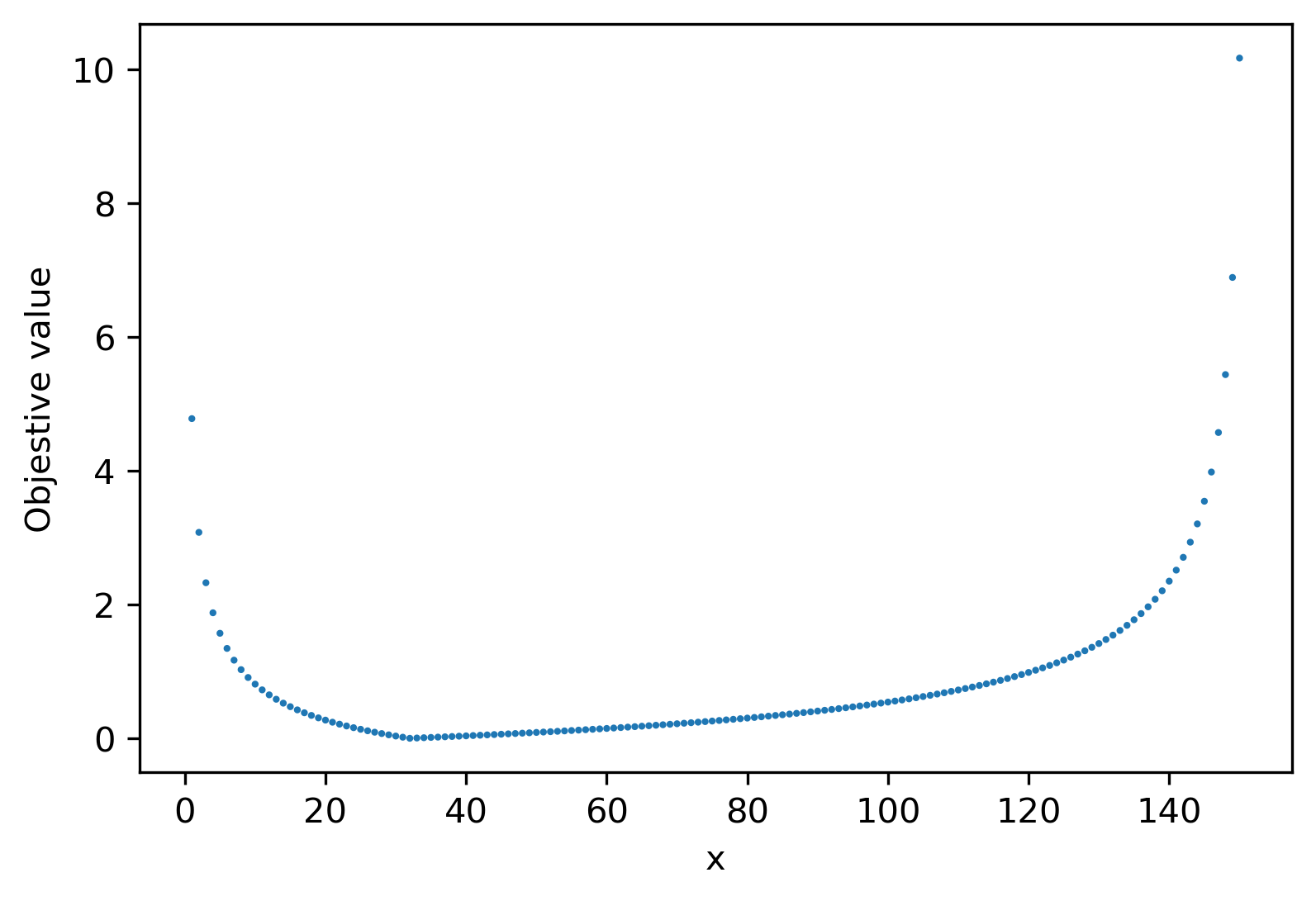}
    \caption{}
\end{subfigure}\hfill
\caption{The landscapes of objective functions in the one-dimensional case. \textbf{(a)} The empirical average waiting time with $N=150$. \textbf{(b)} The landscape of the synthetic convex function with scale $N=150$ and optimum $x^*=31$.} 
\label{fig:landscape}
\end{center}
\end{figure}

\subsection{Separable Convex Function Minimization}

We consider the problem of minimizing a stochastic function whose expectation is a separable $L^\natural$-convex function of the form
\[ f_{c,x^*}(x) := \sum_{i=1}^d c_i g(x_i;x_i^*), \]
where $c_i\in[0.75,1.25]$, $x^*_i \in\{1,\dots,\lfloor0.3N\rfloor\}$ for all $i\in[d]$ and
\[ g(x;x_i^*) := \begin{cases}
(x_i^* - x)^{1.2} & \text{if } x \leq x^*\\ (x - x_i^*)^{1.2} & \text{if } x > x^*
\end{cases} ,\quad \forall x,x^* \in [N].
\]
It can be observed that the function $f_{c,x^*}(x)$ is the sum of separable convex functions and therefore is $L^\natural$-convex. Moreover, the function $f_{c,x^*}(x)$ has the optimum $x^*$ associated with the optimal value $0$. For stochastic evaluations, we add Gaussian noise with mean $0$ and variance $1$. 
The advantage of this numerical example is that the expected objective function has a closed form, and we are able to exactly compute the optimality gap of the solutions returned by the proposed algorithms. 

\subsection{Resource Allocation Problem in Service Systems}

%
We consider the $24$-hour operation of a service system with a single stream of incoming customers. The customers arrive according to a doubly stochastic non-homogeneous Poisson process with the intensity function
\[ \Lambda(t) := 0.5\lambda N \cdot ( 1 - |t - 12| / 12 ),\quad\forall t\in[0,24], \]
where $\lambda$ is a positive constant and $N$ is a positive integer. Each customer requests a service with the service time independent and identically distributed according to the log-normal distribution with mean $1/\lambda$ and variance $0.1$. We divide the $24$-hour operation into $d$ time slots with length $24/d$ for some positive integer $d$. For the $i$-th time slot, there are $x_i \in[N]$ of homogeneous servers that work independently in parallel and the number of servers cannot be changed during the slot. Assume that the system operates based on a first-come first-serve routine, with an unlimited waiting room in each queue, and that customers never abandon. 

The decision maker's objective is to select the staffing level $x:=(x_1,\dots,x_d)$ such that the total waiting time of all customers is minimized. Namely, by letting $f(x)$ be the expected total waiting time under the staffing plan $x$, the optimization problem can be written as
\begin{align}\label{eqn:num-1} \min_{x\in[N]^d} f(x). \end{align}
It has been proved in \citet{altman2003discrete} that the function $f(\cdot)$ is multimodular. We define the linear transformation
\[ g(y) := ( y_1,y_2-y_1,\dots, y_d - y_{d-1} ) \quad \forall y\in\mathbb{R}^d. \]
Then, \citet{murota2003discrete} has proved that
\[ h(y) := f \circ g(y) = f( y_1,y_2-y_1,\dots, y_d - y_{d-1} ) \]
is a $L^\natural$-convex function on the $L^\natural$-convex set
\[ \mathcal{Y} := \{ y \in [Nd]^d ~|~ y_1\in[N],~ y_{i+1}-y_i \in[N],~i=1,\dots, N - 1 \}. \]
The optimization problem \eqref{eqn:num-1} has the trivial solution $x_1=\cdots=x_d=N$. However, in reality, it is also necessary to keep the staffing cost low.
Therefore, we add the staffing cost term $R(x_1,\dots,x_d) := C/d \cdot \sum_{i=1}^d x_i = C/d \cdot y_d$ 
to the objective function, where $C$ is a positive constant. The optimization problem can be written as
\begin{align}\label{eqn:num-3}
    \min_{y\in\mathcal{Y}} h(y) + C/d \cdot y_d.
\end{align}
%
The proposed algorithms can be extended to this problem by considering the \lovasz extension $\tilde{h}(y)$ on the set
\[ \tilde{\mathcal{Y}} := \{ y\in[1,Nd]^d ~|~ y_1\in[1,N],~ y_{i+1}-y_i \in[1,N],~i=1,\dots, N - 1 \}. \]

\subsection{Numerical Results: Tri-section Sampling Algorithm and Shrinking Uniform Sampling Algorithm}

\begin{figure}[t]
\begin{center}
\begin{subfigure}{.49\textwidth}
    \centering
    \includegraphics[scale=0.3]{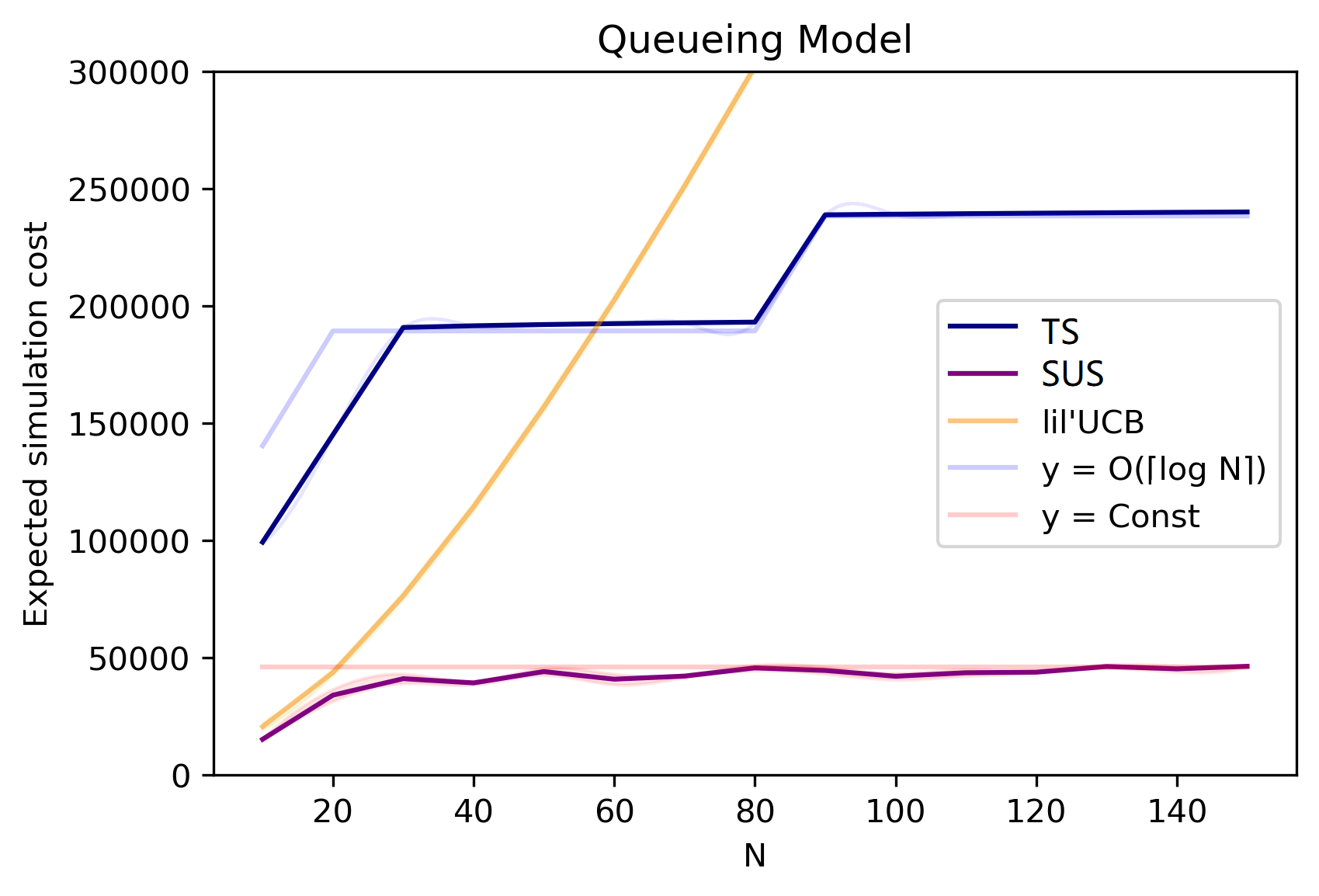}
    \caption{}
\end{subfigure}\hfill
\begin{subfigure}{.49\textwidth}
    \centering
    \includegraphics[scale=0.3]{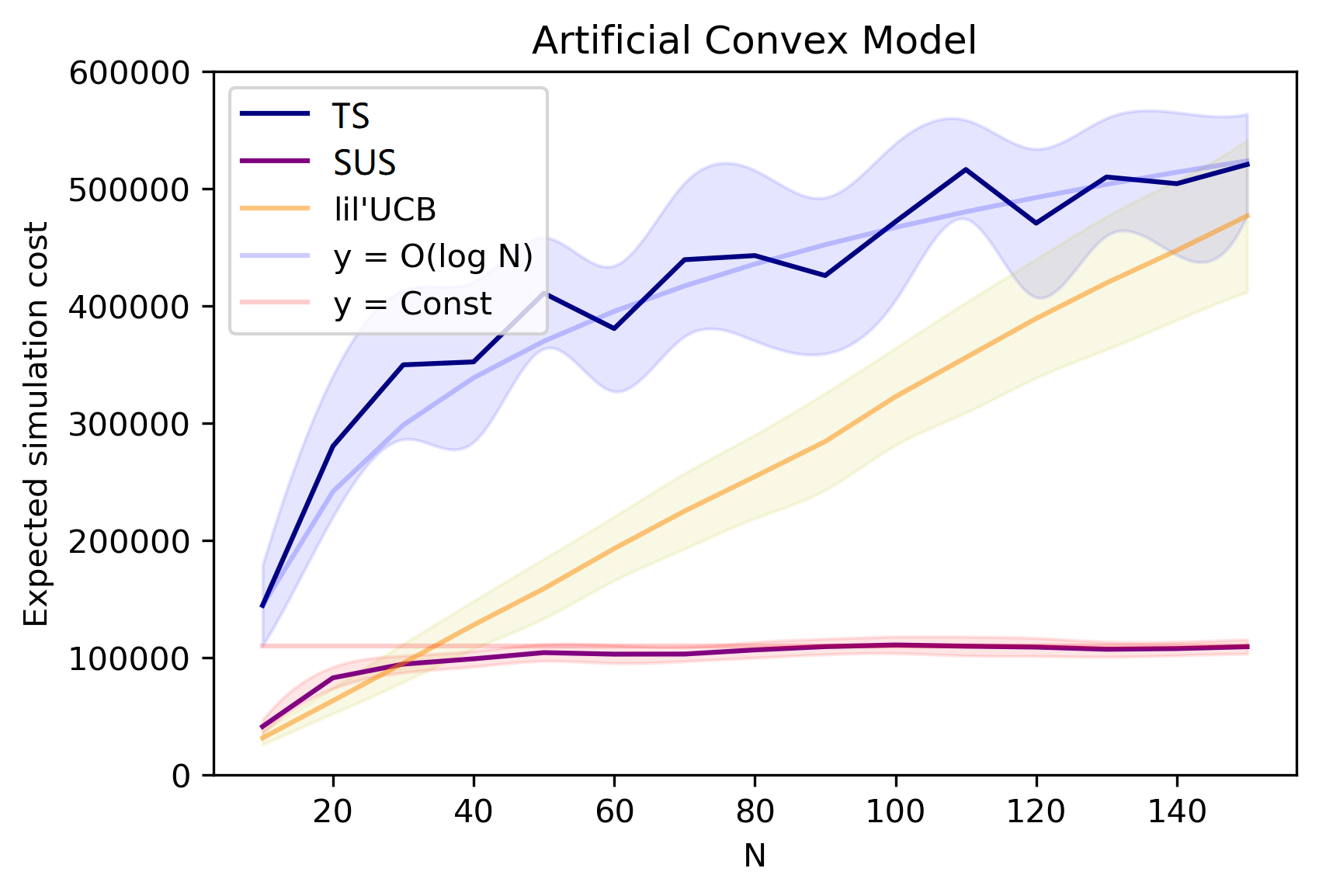}
    \caption{}
\end{subfigure}\hfill
\caption{The expected simulation cost of tri-section sampling (TS), shrinking uniform sampling (SUS) and lil'UCB algorithms in the one-dimensional case. \textbf{(a)} Optimal allocation problem. \textbf{(b)} One-dimensional separable convex function minimization.} 
\label{fig:one-dim}
\end{center}
\end{figure}

We first compare the performance of the tri-section sampling (TS) algorithm and the shrinking uniform sampling (SUS) algorithm on the optimal allocation problem in Section 6.1 and the closed-form convex function minimization problem in Section 6.2. As a comparison to the existing algorithms, we also implement the state-of-the-art algorithm for the best arm identification problem, namely the lil'UCB algorithm \citep{jamieson2014lil}. The best arm identification problem is equivalent to problem \eqref{eqn:obj} without any convexity structure.
We consider problems with dimension $d=1$ and scale $N\in\{10,20,\dots,150\}$. The expected simulation cost is computed by averaging $400$ independent solving processes. For the optimal allocation problem, we set the optimality parameters for the PGS guarantee as $\epsilon=1$ and $\delta=10^{-6}$. An upper bound on the variance is estimated as $\sigma^2=10$. For the convex function minimization problem, we generate each $c_i$ from the uniform distribution on $[0.75,1.25]$ and $x^*_i$ from the discrete uniform distribution on $\{1,2,\dots, \lfloor 0.3N\rfloor\}$. The optimality parameters are chosen as $\epsilon=0.2$ and $\delta=10^{-6}$ and the variance is set to be $\sigma^2=1$.

It is observed that both algorithms satisfy the given PGS guarantee on the synthetic convex function minimization problem, namely, the $\epsilon$-optimality is satisfied for all implementations. We then plot the estimated expected simulation costs in Figure~\ref{fig:one-dim}. For the optimal allocation problem, the expected simulation costs of the TS and SUS algorithms approximately have $O( \lceil \log N \rceil )$ and almost $O(1)$ dependence on the scale $N$, respectively. The expected simulation cost of the SUS algorithm is almost independent of $N$ and this verifies our theoretical analysis. For the synthetic convex function minimization problem, same as the queueing example, the estimated expected simulation costs of the TS algorithm and SUS algorithm have $O(\log N)$ and almost $O(1)$ dependence on $N$, respectively. This again verifies our theoretical analysis. Moreover, both algorithms outperform the lil'UCB algorithm, when $N$ is large. The numerical results show that our proposed algorithms can efficiently solve large-scale one-dimensional convex problems.




\subsection{Numerical Results: Subgradient Descent and Localization Methods}

We next compare the performances of the truncated stochastic subgradient descent algorithm \citep{zhang2020discrete} and stochastic localization methods proposed in this work. We first consider the separable convex function minimization problem, where we can compute the optimality gap and verify the $\epsilon$-optimality.
The dimension and scale of the separable convex model are chosen as $d\in\{2,6,10,15\}$ and $N\in\{50,500,5000\}$. The optimality guarantee parameters are chosen as $\epsilon=d$ and $\delta=10^{-6}$, respectively. 
The empirical choice of $\epsilon$ ensures that any $\epsilon$-optimal solution $x^0$ satisfies $\|x^0-x^*\|_1 \leq d^{5/6} \ll N$.
We compute the average simulation cost of $10$ independently generated models to estimate the expected simulation cost. 
Moreover, early stopping conditions are designed to terminate algorithms early when little progress is made at any iteration. For the subgradient descent method, we maintain the empirical mean of stochastic objective function values up to the current iteration and terminate the algorithm if the empirical mean does not decrease by $O(\epsilon/\sqrt{N})$ after $O[d\epsilon^{-2}\log(1/\delta)]$ consecutive iterations. For stochastic cutting-plane methods, we terminate the algorithm if the empirical mean of the objective function of the last $5$ iterations does not decrease by $\epsilon/d$. For the dimension reduction method, we terminate the algorithm early if the polytope is empty. Furthermore, we have observed that using $(N\epsilon/4,\delta/4)$-$\mathcal{SO}$ oracles in localization methods is sufficient for producing high-probability guarantees on this example.

We summarize the results in Table~\ref{tab:multi-dim}, where the coverage rate refers to the percentage of implementations that produce an $\epsilon$-optimal solution
Since the coverage rates of the algorithms are all equal to $100\%$, the PGS guarantee is likely to be satisfied by all of the algorithms. 
The performances of localization methods are better than the subgradient descent algorithm in all settings especially for the large-scale instances. The simulation cost of the random walk-based cutting-plane method is better than the Vaidya's cutting-plane method, which may be a result of the extra $\log(d)$ term in the simulation cost; see the discussion in Remark \ref{rmk:3}. The dimension reduction method has the best performance on examples with $N=500,\,5000$ and has the advantage of not requiring any knowledge about the Lipschitz constant. \revisee{From the experimental results, we can see that the empirical performances of proposed algorithms are sometimes better than their theoretical guarantees.}

\begin{table}[thp]
\small
\caption{Simulation cost and coverage rate of different algorithms on separable convex functions. }\label{tab:multi-dim}
  \begin{center}  
      \begin{tabular}{p{0.6cm}<{\centering}p{0.6cm}<{\centering}p{1.5cm}<{\centering}p{1.5cm}<{\centering}p{1.5cm}<{\centering}p{1.5cm}<{\centering}p{1.5cm}<{\centering}p{1.5cm}<{\centering}p{1.5cm}<{\centering}p{1.5cm}}
          \toprule[2pt]
          \multicolumn{2}{c}{} & \multicolumn{2}{c}{\textbf{Search Methods}}    & \multicolumn{6}{c}{\textbf{Localization Methods (this work)}}  \\
          \cline{5-10}
          \multicolumn{2}{c}{\textbf{Params.}} & \multicolumn{2}{c}{{SubGD}}    & \multicolumn{2}{c}{{Vaidya's}}  & \multicolumn{2}{c}{{Random Walk}} & \multicolumn{2}{c}{{Dim Reduction}} \\
          d & N     & Cost & Rate(\%)       & Cost & Rate(\%)       & Cost & Rate(\%)       & Cost & Rate(\%)  \\ 
          \midrule[1pt]
        2 & 50    & 1.08e3  & 100.0     & 2.74e2  & 100.0     & 1.66e2  & 100.0     & \textbf{1.56e2}  & 100.0       \\
        \hline
        2 & 500   & 2.54e4  & 100.0     & 6.54e2  & 100.0     & 2.32e2  & 100.0     & \textbf{2.08e2}  & 100.0       \\
        \hline
        2 & 5000  & 3.97e5  & 100.0     & 1.13e4  & 100.0     & 5.29e2  & 100.0     & \textbf{4.66e2}  & 100.0       \\
        \hline
        6 & 50    & 5.00e3  & 100.0     & 4.13e2  & 100.0     & \textbf{3.36e2}  & 100.0     & 4.05e2  & 100.0       \\
        \hline
        6 & 500   & 4.75e4  & 100.0     & 1.34e3  & 100.0     & 1.65e3  & 100.0     & \textbf{6.45e2}  & 100.0       \\
        \hline
        6 & 5000  & 2.72e6  & 100.0     & 8.15e4  & 100.0     & 4.75e3  & 100.0     & \textbf{8.25e2}  & 100.0       \\
        \hline
        10& 50    & 8.46e3  & 100.0     & 7.98e2  & 100.0     & \textbf{7.70e2}  & 100.0     & 8.34e2  & 100.0       \\
        \hline
        10& 500   & 6.32e4  & 100.0     & 6.57e3  & 100.0     & 2.16e3  & 100.0     & \textbf{1.48e3}  & 100.0       \\
        \hline
        10& 5000  & 7.76e6  & 100.0     & 2.42e5  & 100.0     & 8.03e3  & 100.0     & \textbf{2.02e3}  & 100.0       \\
        \hline
        15& 50    & 1.23e4  & 100.0     & 1.50e3  & 100.0     & \textbf{1.91e3}  & 100.0     & 2.18e3  & 100.0       \\
        \hline
        15& 500   & 2.83e5  & 100.0     & 2.66e4  & 100.0     & 1.06e4  & 100.0     & \textbf{3.19e3}  & 100.0       \\
        \hline
        15& 5000  & 1.85e7  & 100.0     & 1.96e6  & 100.0     & 1.55e5  & 100.0     & \textbf{4.85e3}  & 100.0       \\
          \bottomrule[2pt]
      \end{tabular}
  \end{center}
\end{table}

We then consider the multi-dimensional resource allocation problem. We first fix the dimension (number of time slots) to be $d=4$ and compare the performance with the scale $N\in\{10,20,30,40,50\}$, and we then fix the scale to be $N=10$ and compare the performance with the dimension $d\in\{4,8,12,16,20,24\}$. The parameters of the problem are chosen as $\lambda=1$ and $C=10$, and the optimality guarantee parameters are $\epsilon=N/2$ and $\delta=10^{-6}$. An upper bound on the variance is estimated as $\sigma^2=30\sqrt{N}$. For each problem setup, we average the results of $10$ independent implementations to estimate the expected simulation cost and the objective value of the returned solution. 
The results are summarized in Table \ref{tab:multi-dim-queue}. It is observed that the dimension reduction method achieves the best performance in all cases, although its simulation costs have a faster growth rate than other methods. The stochastic cutting-plane methods also outperform the subgradient descent algorithm when the dimension is $4$. The truncated stochastic subgradient descent method returns the smallest objective values except the case when $(d,N)=(4,50)$, and the objective values returned by other algorithms are not much larger than the truncated stochastic subgradient descent method. This is possible since we are searching for PGS solutions and an optimality gap smaller than $\epsilon=N/2$ is acceptable.

\begin{table}[t]
\revise{
\small
\caption{Simulation cost and objective value of different algorithms on the resource allocation problem. }\label{tab:multi-dim-queue}
  \begin{center}  
      \begin{tabular}{p{0.6cm}<{\centering}p{0.6cm}<{\centering}p{1.5cm}<{\centering}p{1.5cm}<{\centering}p{1.5cm}<{\centering}p{1.5cm}<{\centering}p{1.5cm}<{\centering}p{1.5cm}<{\centering}p{1.5cm}<{\centering}p{1.5cm}}
          \toprule[2pt]
          \multicolumn{2}{c}{} & \multicolumn{2}{c}{\textbf{Search Methods}}    & \multicolumn{6}{c}{\textbf{Localization Methods (this work)}}  \\
          \cline{5-10}
          \multicolumn{2}{c}{\textbf{Params.}} & \multicolumn{2}{c}{{SubGD}}    & \multicolumn{2}{c}{{Vaidya's}}  & \multicolumn{2}{c}{{Random Walk}} & \multicolumn{2}{c}{{Dim Reduction}} \\
          d & N     & Cost & Obj.       & Cost & Obj. & Cost & Obj. & Cost & Obj.  \\ 
          \midrule[1pt]
          4 & 10    & 3.06e5  & 2.13e1  & 9.89e4  & 2.19e1  & 6.92e4  & 2.47e1  & \textbf{2.42e4}  & 2.40e1  \\
          \hline
          4 & 20    & 1.08e5  & 3.41e1  & 3.64e4  & 3.42e1  & 2.45e4  & 3.73e1  & \textbf{1.40e4}  & 3.44e1  \\
          \hline
          4 & 30    & 7.79e4  & 4.59e1  & 1.94e4  & 4.65e1  & 1.33e4  & 5.10e1  & \textbf{9.21e3}  & 4.59e1  \\
          \hline
          4 & 40    & 5.06e4  & 5.73e1  & 1.24e4  & 5.86e1  & 8.68e3  & 6.35e1  & \textbf{6.31e3}  & 5.75e1  \\
          \hline
          4 & 50    & 4.50e4  & 6.91e1  & 9.24e3  & 6.98e1  & 6.22e3  & 7.49e1  & \textbf{4.03e3}  & 6.67e1  \\
          \midrule[1pt]
          8 & 10    & 1.20e6  & 2.01e1  & 7.27e5  & 2.12e1  & 5.53e5  & 2.17e1  & \textbf{1.48e5}  & 2.12e1  \\
          \hline
          12& 10    & 2.69e6  & 1.90e1  & 2.49e6  & 2.07e1  & 1.86e6  & 2.13e1  & \textbf{6.10e5}  & 2.01e1  \\
          \hline
          16& 10    & 4.78e6  & 1.83e1  & 6.64e6  & 2.02e1  & 4.43e6  & 2.04e1  & \textbf{1.59e6}  & 1.91e1  \\
          \hline
          20& 10    & 7.45e6  & 1.78e1  & 1.38e7  & 2.01e1  & 8.65e6  & 2.04e1  & \textbf{3.21e6}  & 1.81e1  \\
          \hline
          24& 10    & 1.43e7  & 1.71e1  & 2.42e7  & 1.99e1  & 1.49e7  & 2.04e1  & \textbf{8.54e6}  & 1.76e1  \\
          \bottomrule[2pt]
      \end{tabular}
  \end{center}
  }
\end{table}

In summary, based on the results from numerical results, the shrinking uniform sampling algorithm and the dimension reduction method provide a more efficient choice for large-scale convex discrete optimization via simulation problems, and they have the advantage that no prior information about the objective function is required except the $L^\natural$-convexity.

\section{Conclusion}
\label{sec:cls}

In this paper, algorithms based on the idea of localization are proposed for large-scale convex discrete optimization via simulation problems. The simulation-optimization algorithms are theoretically guaranteed to identify a solution whose corresponding objective value is close to the optimal objective value up to a given precision with high probability. Moreover, the efficiency of the developed algorithms is evaluated by obtaining upper bounds on the expected simulation cost. Specifically, in the one-dimensional case, we propose the shrinking uniform sampling method, which has an expected simulation cost as $O[ \epsilon^{-2}(\log(N) + \log(1/\delta))]$, which attains the best achievable performance under the asymptotic criterion \citep{kaufmann2016complexity}, i.e., when $\delta \rightarrow 0$. For the multi-dimensional case, we combine the idea of localization with subgradient information. The dimension reduction algorithm is designed using a new framework to extend deterministic cutting-plane methods. The expected simulation cost is proven to be upper bounded by a constant that is independent of the Lipschitz constant. In addition, all proposed algorithms do not require prior knowledge about the Lipschitz constant. Finally, an adaptive algorithm is designed to avoid the requirement that the variance of the noise should be estimated a priori. Numerical results on both synthetic and queueing models demonstrate that the proposed algorithms have better performances compared to benchmark methods especially when the problem scale is large.

\bibliographystyle{informs2014} 
\bibliography{ref}

\ECSwitch


\ECHead{Proofs of Statements}

\section{Algorithms and Complexity Analysis for the PCS-IZ Guarantee}\label{ec:pcsiz}

In this section, we provide modified simulation-optimization algorithms for the PCS-IZ guarantee. We assume that the objective value of any sub-optimal choice of decision variables is at least $c$ larger than the optimal objective value, where the indifference zone parameter $c>0$ is known a priori.

\subsection{Modified Tri-section Sampling Algorithm for the PCS-IZ Guarantee}
\label{ec:one-dim-ada}

We first consider the one-dimensional case. When the prior information about the indifference zone parameter $c$ is available, we can modify the tri-section sampling (TS) algorithm to achieve a better simulation cost. The modified algorithm also consists of two parts: the shrinkage of intervals and a sub-problem with at most $3$ points. The improvement is achieved by a weaker condition for the comparison of objective values at two $3$-quantiles. We give the modified algorithm in Algorithm \ref{alg:one-dim-iz} and omit those lines that are the same as Algorithm \ref{alg:one-dim}. 
\bigskip
\begin{breakablealgorithm}
\caption{Tri-section sampling algorithm for the PCS-IZ guarantee}
\label{alg:one-dim-iz}
\begin{algorithmic}[1]
\Require{Model $\mathcal{X}=[N],(\mathsf{Y},\mathcal{B}_\mathsf{Y}),F(x,\xi_x)$, optimality guarantee parameter $\delta$, indifference zone parameter $c$.}
\Ensure{An $(c,\delta)$-PCS-IZ solution $x^*$ to problem \eqref{eqn:obj}.}
\State Set upper and lower bounds of current interval $\revise{x_L}\leftarrow 1,\revise{x_U}\leftarrow N$. 
\State Set maximal number of comparisons $T_{max}\leftarrow\log_{1.5}(N) + 2$.
\While{$\revise{x_U} - \revise{x_L} > 2$} \Comment{Iterate until there are at most $3$ decisions.}
     \Statex ...
    \makeatletter
    \setcounter{ALG@line}{7}
    \makeatother
    \State Stop sampling if one of the following conditions holds:
    \begin{align*}  (i)\quad& \hat{F}_n(\revise{q_{1/3}})-h_{1/3}\geq\hat{F}_n(\revise{q_{2/3}})+h_{2/3},\\
                    (ii)\quad& \hat{F}_n(\revise{q_{1/3}})+h_{1/3}\leq\hat{F}_n(\revise{q_{2/3}})-h_{2/3},\\
                    (iii)\quad& h_{1/3} \leq (\revise{q_{2/3}} - \revise{q_{1/3}})\cdot c/5\text{ and } h_{2/3} \leq (\revise{q_{2/3}} - \revise{q_{1/3}})\cdot c/5. \end{align*}
    \Statex ...
    \makeatletter
    \setcounter{ALG@line}{12}
    \makeatother
    \If{$h_{1/3} \leq (\revise{q_{2/3}} - \revise{q_{1/3}})\cdot c/5$ and $h_{2/3} \leq (\revise{q_{2/3}} - \revise{q_{1/3}})\cdot c/5$}
        \State Update $\revise{x_L}\leftarrow \revise{q_{1/3}}$ and $\revise{x_U}\leftarrow \revise{q_{2/3}}$.
    \EndIf
\EndWhile
\State Simulate $F(x,\xi_x)$ for all $x\in\{\revise{x_L},\dots, \revise{x_U}\}$ until the $1-\delta/(2T_{max})$ confidence half-widths are smaller than $c/3$.
\Statex \Comment{Now $\revise{x_U}-\revise{x_L}\leq 2$.}
\State Return the point in $\{\revise{x_L},\dots, \revise{x_U}\}$ with minimal empirical mean.
\end{algorithmic}
\end{breakablealgorithm}
\bigskip
The following theorem proves the correctness and the expected simulation cost of the modified tri-section sampling algorithm.

\begin{theorem}\label{thm:one-dim-iz}
Suppose that Assumptions \ref{asp:1}-\ref{asp:4} hold. The modified tri-section sampling algorithm is a $[(c,\delta)\text{-PCS-IZ},\mathcal{MC}_c]$-algorithm. Furthermore, we have
\[ T(\delta,\mathcal{MC}_{c}) = O\left[ \frac{1}{c^2}\log\left( \frac{\log(N)}{\delta} \right) + \log(N) \right] = \tilde{O}\left[ \frac{1}{c^2}\log\left( \frac{1}{\delta} \right) \right]. \]
%
\end{theorem}
\begin{proof}{Proof of Theorem \ref{thm:one-dim-iz}.}
The proof of is provided in \ref{ec:one-dim-iz}.
\hfill\Halmos\end{proof}

By Theorem \ref{thm:one-dim-iz}, the expected simulation cost for the PCS-IZ guarantee is asymptotically independent of the number of points $N$, when the failing probability $\delta$ is sufficiently small. 
%
%
If the shrinking uniform sampling (SUS) algorithm is used for the PCS-IZ guarantee, the algorithm also achieves an $\tilde{O}(c^{-2}\log(1/\delta))$ expected simulation cost by setting the optimality parameter $\epsilon=c/2$. This is because the objective values of sub-optimal solutions are larger than that of the optimal solution by at least $c$ and because the solution satisfying the $(c/2,\delta)$-PGS guarantee also satisfies the $(c,\delta)$-PCS-IZ guarantee. Hence, for both the modified TS algorithm and the SUS algorithm, the asymptotic simulation cost has an upper bound that is independent of $N$. However, we note that the space complexity of the modified TS algorithm is only $\tilde{O}(\log(N))$, whereas the SUS algorithm requires $O(N)$ memory space. Therefore, the modified TS algorithm is preferred for the PCS-IZ guarantee.

\subsection{Modified Stochastic Cutting-plane Methods for the PCS-IZ Guarantee}
\label{ec:multi-dim-weak}

In the multi-dimensional case, we develop modified stochastic cutting-plane methods for the PCS-IZ guarantee. Using the same adaptive acceleration scheme as in~\citet{zhang2020discrete}, the indifference zone parameter can help reduce the dependence of the simulation cost on the problem scale $N$. We give the pseudo-code of the accelerated stochastic cutting-plane method in Algorithm \ref{alg:multi-dim-weak-iz}. 
\bigskip
\begin{breakablealgorithm}
\caption{Stochastic cutting-plane method for the PCS-IZ guarantee}
\label{alg:multi-dim-weak-iz}
\begin{algorithmic}[1]
\Require{Model $\mathcal{X},(\mathsf{Y},\mathcal{B}_\mathsf{Y}),F(x,\xi_x)$, optimality guarantee parameter $\delta$, indifference zone parameter $c$, Lipschitz constant $L$, $(\epsilon,\delta)$-$\mathcal{SO}$ oracle $\hat{g}$.}
\Ensure{An $(c,\delta)$-PCS-IZ solution $x^*$ to problem \eqref{eqn:obj}.}
\State Set the initial guarantee $\epsilon_0\leftarrow cN/4$.
\State Set the number of epochs $E\leftarrow \lceil \log_2(N) \rceil + 1$.
\State Set the initial searching space $\mathcal{Y}_0 \leftarrow [1,N]^d$.
\For{$e=0,\dots,E-1$}
    \State Use Algorithm \ref{alg:multi-dim-weak} to get an $(\epsilon_e,\delta/(2E))$-PGS solution $x_e$ in $\mathcal{Y}_e$.
    \State Update guarantee $\epsilon_{e+1}\leftarrow \epsilon_e / 2$.
    \State Update the searching space $\mathcal{Y}_{e+1}\leftarrow \mathcal{N}(x_e, 2^{-e-2}N)$.
\EndFor
\State Round $x_{E-1}$ to an integral point by Algorithm \ref{alg:multi-dim-round}.
\end{algorithmic}
\end{breakablealgorithm}
\bigskip
\revise{We can prove the correctness and estimate the expected simulation cost of the accelerated algorithm in the same way as Theorem 8 in \citet{zhang2020discrete}. Thus, we omit the proof.}
\begin{theorem}\label{thm:weak-2}
Suppose that Assumptions \ref{asp:1}-\ref{asp:5} hold. The accelerated stochastic cutting-plane method returns a $(c,\delta)$-PCS-IZ solution and we have
\begin{align*} 
T(c,\delta,\mathcal{MC}_c) &= O\left[ \frac{d^3\log(N)}{\epsilon^2}\log(\frac{dLN}{\epsilon})\log\left(\frac{1}{\delta}\right) + d^2\log(N)\log(\frac{dLN}{\epsilon}) \right]\\
&= \tilde{O}\left[ \frac{d^3\log(N)}{\epsilon^2}\log(\frac{dLN}{\epsilon})\log\left(\frac{1}{\delta}\right) \right].
\end{align*}
\end{theorem}
%
By substituting Algorithm \ref{alg:multi-dim-weak} with Algorithm \ref{alg:multi-dim-strong} in the above algorithm, \revise{the acceleration scheme can be applied to Algorithm \ref{alg:multi-dim-strong} to reduce the number of required simulation runs when the indifference zone parameter $c$ is known}. We give the reduced expected simulation cost for achieving the PCS-IZ guarantee and omit the proof.
\begin{theorem}\label{thm:strong-2}
Suppose that Assumptions \ref{asp:1}-\ref{asp:4} hold. The accelerated dimension reduction method returns an $(c,\delta)$-PCS-IZ solution and we have
\begin{align*} 
T(c,\delta,\mathcal{MC}_c) &= O\left[ \frac{d^3\log(N)(d+\log(N))}{\epsilon^2}\log\left(\frac{1}{\delta}\right) + d^2\log(N)(d+\log(N)) \right]\\
&= \tilde{O}\left[ \frac{d^3\log(N)(d+\log(N))}{\epsilon^2}\log\left(\frac{1}{\delta}\right) \right].
\end{align*}
\end{theorem}

\subsection{Proof of Theorem \ref{thm:one-dim-iz}}
\label{ec:one-dim-iz}

We first estimate the simulation cost of each iteration and the sub-problem.
\begin{lemma}\label{lem:one-dim-3}
Suppose that Assumptions \ref{asp:1}-\ref{asp:4} hold. The simulation cost for each iteration of Algorithm \ref{alg:one-dim-iz} is at most $100\sigma^2c^{-2}(\revise{q_{2/3}}-\revise{q_{1/3}})^{-2}\log[4T_{max}/\delta]$, where $T_{max}:=\log_{1.5}(N) + 2$. The simulation cost of the sub-problem is at most $54\sigma^2c^{-2}\log[4T_{max}/\delta]$.
\end{lemma}
\begin{proof}{Proof.}
The proof is similar to the proof of Lemma \ref{lem:one-dim2} and we only give a sketch of the proof. By the Hoeffding bound, simulating
\[ \frac{50\sigma^2}{(\revise{q_{2/3}}-\revise{q_{1/3}})^{2}c^2}\log\left(\frac{4T_{max}}{\delta}\right) \]
times on quantiles $\revise{q_{1/3}}$ and $\revise{q_{2/3}}$ is enough to ensure that the confidence half-width is at most $(\revise{q_{2/3}} - \revise{q_{1/3}})\cdot c/5$. It implies that the last condition in line 8 is satisfied and the simulation cost of each iteration is at most $100\sigma^2c^{-2}(\revise{q_{2/3}}-\revise{q_{1/3}})^{-2}\log[4T_{max}/\delta]$. For the sub-problem, Hoeffding bound gives that simulating
\[ \frac{18\sigma^2}{c^2}\log\left(\frac{4T_{max}}{\delta}\right) \]
times for each point is enough to ensure that the confidence half-width is at most $c/3$. Since there are at most $3$ points in the sub-problem, the simulation cost for the sub-problem is at most $54\sigma^2c^{-2}\log[4T_{max}/\delta]$. 
\hfill\Halmos\end{proof}
Using Lemma \ref{lem:one-dim-3}, we can estimate the total simulation cost of Algorithm \ref{alg:one-dim-iz}.
%
%
\begin{lemma}
Suppose that Assumptions \ref{asp:1}-\ref{asp:4} hold. The expected simulation cost of Algorithm \ref{alg:one-dim-iz} is bounded by
\[ \frac{459\sigma^2}{c^{2}}\log\left(\frac{4T_{max}}{\delta}\right) = O\left[ \frac{1}{c^2}\log\left( \frac{1}{\delta} \right) \right], \]
where $T_{max}:=\log_{1.5}(N) + 2$.
\end{lemma}
\begin{proof}{Proof.}
We denote the upper bound and the lower bound at the beginning of the $k$-th iteration as $\revise{x_{U_k}}$ and $\revise{x_{L_k}}$, respectively. By Lemma \ref{lem:one-dim-3}, the simulation cost for the $k$-th iteration is at most $100\sigma^2c^{-2}(\revise{q^k_{2/3}-q^k_{1/3}})^{-2}\log[4T_{max}/\delta]$, where \revise{$q^k_{1/3}$} and \revise{$q^k_{2/3}$} are the $3$-quantiles for the $k$-th iteration. By the definition of $3$-quantiles, it follows that $\revise{q^k_{2/3}-q^k_{1/3}} \geq (\revise{x_{U_k}} - \revise{x_{L_k}}) / 3$ and therefore
\begin{align}\label{eqn:one-dim-4} \frac{100\sigma^2}{(\revise{q^k_{2/3}-q^k_{1/3}})^{2}c^{2}}\log\left(\frac{4T_{max}}{\delta}\right) \leq \frac{900\sigma^2}{(\revise{x_{U_k}} - \revise{x_{L_k}})^{2}c^{2}}\log\left(\frac{4T_{max}}{\delta}\right). \end{align}
Hence, we only need to bound the sum $\sum_{k=1}^T(\revise{x_{U_k}} - \revise{x_{L_k}})^{-2}$, where $T$ is the number of iterations of Algorithm \ref{alg:one-dim-iz}. By inequality \eqref{eqn:one-dim-3}, we know
\[ \revise{x_{U_k}} - \revise{x_{L_k}} \geq \frac32(\revise{x_{U_{k+1}}} - \revise{x_{L_{k+1}}}) - 1 ,\quad\forall k\in\{1,2,\dots,T-1\}. \]
We can rewrite the above inequality as $\revise{x_{U_k}} - \revise{x_{L_k}} - 2 \geq 3/2\cdot(\revise{x_{U_{k+1}}} - \revise{x_{L_{k+1}}} - 2)$. Since $T$ is the last iteration, it holds that $x_{U_T} - x_{L_T} \geq 4$ and therefore
\[ \revise{x_{U_k}} - \revise{x_{L_k}} - 2 \geq \left(\frac32\right)^{T-k}(\revise{x_{U_T}} - \revise{x_{L_T}} - 2) \geq 2\cdot\left(\frac32\right)^{T-k}. \]
Summing over $k=1,2,\dots,T$, we get the bound
\[ \sum_{k=1}^T~(\revise{x_{U_k}} - \revise{x_{L_k}})^{-2} \leq \sum_{k=1}^T~\left(2\cdot\left(\frac32\right)^{T-k} + 2\right)^{-2} \leq \sum_{k=1}^T~\frac14\cdot\left(\frac32\right)^{-2(T-k)} = \frac9{20}\left[1-\left(\frac49\right)^T\right] \leq \frac9{20}. \]
Combining with inequality \eqref{eqn:one-dim-4}, the simulation cost for $T$ iterations is at most
\[ \frac{900\sigma^2}{c^{2}}\log\left(\frac{4T_{max}}{\delta}\right) \cdot \sum_{k=1}^T~(\revise{x_{U_k}} - \revise{x_{L_k}})^{-2} \leq \frac{405\sigma^2}{c^{2}}\log\left(\frac{4T_{max}}{\delta}\right). \]
Considering the simulation cost of the sub-problem, the total simulation cost of Algorithm \ref{alg:one-dim-iz} is at most
\[ \frac{405\sigma^2}{c^{2}}\log\left(\frac{4T_{max}}{\delta}\right) + \frac{54\sigma^2}{c^{2}}\log\left(\frac{4T_{max}}{\delta}\right) = \frac{459\sigma^2}{c^{2}}\log\left(\frac{4T_{max}}{\delta}\right).  \]
\hfill\hfill\Halmos\end{proof}
Finally, we verify the correctness of Algorithm \ref{alg:one-dim-iz} and get an upper bound on $T(\delta,\mathcal{MC}_{c})$.
\begin{proof}{Proof of Theorem \ref{thm:one-dim-iz}.}
Similar to the proof of Theorem \ref{thm:one-dim}, we use the induction method to prove that Event-I happens for the $k$-th iteration with probability at least $1-(k-1)\delta/T_{max}$. For the first iteration, the solution to problem \eqref{eqn:obj} is in $\mathcal{X}=\{1,2,\dots,N\}$ with probability $1$. We assume that the claim is true for the first $k-1$ iterations, and consider the $k$-th iteration. If one of the first two conditions in line 8 holds when the sampling process terminates, then, by the same analysis as the proof of Theorem \ref{thm:one-dim}, we know that Event-I happens for the $k$-th iteration with probability at least $1-(k-1)\delta/T_{max}$. Hence, we only need consider the case when only the last condition in line 8 holds when the sampling process terminates. Since the first two conditions in line 8 do not hold, we know
\begin{align}\label{eqn:one-dim-5} \left|\hat{F}_n(\revise{q_{1/3}}) - \hat{F}_n(\revise{q_{2/3}})\right| \leq (\revise{q_{2/3}} - \revise{q_{1/3}})\cdot 2c / 5. \end{align}
In addition, it holds that
\[ \left| f(\revise{q_{1/3}}) - \hat{F}_n(\revise{q_{1/3}}) \right| \leq (\revise{q_{2/3}} - \revise{q_{1/3}})\cdot c / 5,\quad \left| f(\revise{q_{2/3}}) - \hat{F}_n(\revise{q_{2/3}}) \right| \leq (\revise{q_{2/3}} - \revise{q_{1/3}})\cdot c / 5 \]
with probability at least $1-\delta/T_{max}$. Combining with inequality \eqref{eqn:one-dim-5}, we know that
\begin{align}\label{eqn:one-dim-6} \left| f(\revise{q_{1/3}}) - f(\revise{q_{2/3}}) \right| \leq (\revise{q_{2/3}} - \revise{q_{1/3}})\cdot 4c / 5 < (\revise{q_{2/3}} - \revise{q_{1/3}})\cdot c \end{align}
holds with probability at least $1-\delta/T_{max}$. We assume that the above event and Event-I for the $(k-1)$-th iteration both hold, which has a joint probability of at least $1-\delta/T_{max} - (k-2)\delta/T_{max} = 1 - (k-1)\delta/T_{max}$. If the solution to problem \eqref{eqn:obj} is not in $\{\revise{q_{1/3}},\dots,\revise{q_{2/3}}\}$, then function $f(x)$ is monotone on $\{\revise{q_{1/3}},\dots,\revise{q_{2/3}}\}$ and
\[ \left| f(\revise{q_{1/3}}) - f(\revise{q_{2/3}}) \right| = \sum_{x=\revise{q_{1/3}}}^{\revise{q_{2/3}}-1}~ \left| f(x) - f(x+1) \right|. \]
Since the indifference zone parameter is $c$ and the function $f(x)$ is convex, the function value difference between any two neighbouring points is at least $c$, which implies that
\[ \sum_{x=\revise{q_{1/3}}}^{\revise{q_{2/3}}-1}~ \left| f(x) - f(x+1) \right| \geq (\revise{q_{2/3}}) - \revise{q_{1/3}})\cdot c. \]
However, the above inequality contradicts inequality \eqref{eqn:one-dim-6} and thus the solution to problem \eqref{eqn:obj} is in $\{\revise{q_{1/3}},\dots,\revise{q_{2/3}}\}$. Hence, Event-I happens for the $k$-th iteration with probability at least $1-(k-1)\delta/T_{max}$.

Suppose that there are $T$ iterations in Algorithm \ref{alg:one-dim-iz}. Since the updating rule of intervals is not changed, Lemma \ref{lem:one-dim1} gives $T \leq T_{max}-1$. By the induction method, the solution to problem \eqref{eqn:obj} is in $\{x_{L_{T+1}},\dots,\revise{x_{U_{k+1}}}\}$ with probability at least $1-T\cdot \delta/T_{max}\geq 1-\delta+\delta/T_{max}$. Using the same analysis as Theorem \ref{thm:one-dim}, the point returned by the sub-problem is at most $2c/3$ larger than the optimal value with probability at least $1-\delta$. By the assumption that the indifference zone parameter is $c$, all feasible points have function values at least $c$ larger than the optimal value. This implies that the solution returned by Algorithm \ref{alg:one-dim-iz} is optimal with probability at least $1-\delta+\delta/T_{max}-\delta/T_{max} \geq 1-\delta$ and Algorithm \ref{alg:one-dim-iz} is a $[(c,\delta)\text{-PCS-IZ},\mathcal{MC}_c]$-algorithm. 
\hfill\Halmos\end{proof}

\section{Proofs in Section \ref{sec:one-dim}}

\subsection{Proof of Theorem \ref{thm:one-dim}}
\label{ec:one-dim}

We first estimate the simulation cost of Algorithm \ref{alg:one-dim}. The following lemma gives an upper bound on the total number of iterations.
\begin{lemma}\label{lem:one-dim1}
Suppose that Assumptions \ref{asp:1}-\ref{asp:4} hold. The number of iterations of Algorithm \ref{alg:one-dim} is at most $\log_{1.5}(N) + 1$.
\end{lemma}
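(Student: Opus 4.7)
My plan is to track the length $\ell_t := U_t - L_t$ of the active interval after $t$ iterations and show that it contracts geometrically. The initial length is $\ell_0 = N-1$, and the while loop terminates as soon as $\ell_t \leq 2$, so once I establish a contraction of the form $\ell_{t+1} - 2 \leq \tfrac{2}{3}(\ell_t - 2)$, the bound $T \leq \log_{1.5}(N) + 1$ will follow by iterating.

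The main step is a short arithmetic computation of the new length in each of the three update branches of Algorithm \ref{alg:one-dim}. Writing $\ell = U - L$ and using that $L, U$ are integers, a direct calculation gives
\[ N_{1/3} = L + \lfloor \ell/3 \rfloor, \qquad N_{2/3} = U - \lfloor \ell/3 \rfloor. \]
Consequently the updates in branches (i), (ii), (iii) (lines 10, 12, 14) produce new interval lengths $\ell - \lfloor \ell/3 \rfloor$, $\ell - \lfloor \ell/3 \rfloor$, and $\ell - 2\lfloor \ell/3 \rfloor$ respectively, so in every case $\ell_{t+1} \leq \ell_t - \lfloor \ell_t/3 \rfloor$. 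Using $\lfloor \ell/3 \rfloor \geq (\ell - 2)/3$ then yields $\ell_{t+1} - 2 \leq \tfrac{2}{3}(\ell_t - 2)$.

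Iterating the contraction gives $\ell_T - 2 \leq (2/3)^T (N-3)$. Because $\ell_T$ is always a non-negative integer, the loop exit condition $\ell_T \leq 2$ is equivalent to $\ell_T - 2 < 1$, which is implied by $(2/3)^T (N-3) < 1$, i.e., $T > \log_{1.5}(N-3)$. The smallest integer $T$ with this property is at most $\log_{1.5}(N-3) + 1 \leq \log_{1.5}(N) + 1$, yielding the claimed bound. The only delicate step is the floor/ceiling bookkeeping for $N_{1/3}$ and $N_{2/3}$ and for verifying case (iii); once those identities are in hand, the remainder is a routine geometric-decay argument.
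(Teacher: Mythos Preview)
Your proposal is correct and follows essentially the same approach as the paper: both track the interval size, establish the same geometric contraction (your $\ell_{t+1}-2 \le \tfrac{2}{3}(\ell_t-2)$ is exactly the paper's $n_{k+1}-3 \le \tfrac{2}{3}(n_k-3)$ after the shift $n=\ell+1$), and then iterate to get the $\log_{1.5}(N)+1$ bound. Your explicit formulas $N_{1/3}=L+\lfloor \ell/3\rfloor$, $N_{2/3}=U-\lfloor \ell/3\rfloor$ replace the paper's case analysis on $n_k \bmod 3$, but the argument is otherwise identical; just remember to note (as the paper does) that the case $N\le 3$ is trivial since the while loop never executes.
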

\begin{proof}{Proof.}
If the total number of points $N$ is at most $3$, then there is no iteration. In the following proof, we assume $N \geq 4$. We first calculate the shrinkage of interval length after each iteration. We denote the upper and the lower bound at the beginning of the $k$-th iteration as $\revise{x_{U_k}}$ and $\revise{x_{L_k}}$, respectively. Then, we know there are $n_k:=\revise{x_{U_k}}-\revise{x_{L_k}}+1$ points in the $k$-th iteration and the algorithm starts with $\revise{x_{L_1}}=1,\revise{x_{U_1}}=N$. We define the $3$-quantiles $\revise{q_{1/3}}:=\lfloor 2\revise{x_{L_k}}/3+\revise{x_{U_k}}/3\rfloor$ and $\revise{q_{2/3}}:=\lceil \revise{x_{L_k}}/3+2\revise{x_{U_k}}/3\rceil$. By the updating rule, the next interval is
\[ [\revise{x_{L_k}},\revise{q_{2/3}}]\quad\text{ or }\quad[\revise{q_{1/3}},\revise{x_{U_k}}]\quad\text{ or }\quad [\revise{q_{1/3}},\revise{q_{2/3}}]. \]
By discussing three cases when $n_k\in3\mathbb{Z}$, $n_k\in3\mathbb{Z}+1$ and $n_k\in3\mathbb{Z}+2$, we know the next interval has at most $2n_k/3+1$ points, i.e., 
\begin{align}\label{eqn:one-dim-3} n_{k+1} \leq 2n_k/3+1. \end{align}
Rewriting the inequality, we get the relation $n_{k+1} - 3 \leq 2(n_k - 3)/3$. Combining with the fact that $n_1 = N$, it follows that
\[ n_k \leq \left(\frac23\right)^{k-1}(N-3) + 3. \]
Suppose Algorithm \ref{alg:one-dim} terminates after $T$ iterations. Then, it holds that $n_T \geq 4$ and $n_{T+1} \leq 3$. Hence, we know
\[ 4 \leq n_T \leq \left(\frac23\right)^{T-1}(N-3) + 3, \]
which implies
\[ T \leq \log_{1.5}(N-3) + 1 < \log_{1.5}(N) + 1. \]
\hfill\Halmos\end{proof}
In the next lemma, we estimate the simulation cost of each iteration.
\begin{lemma}\label{lem:one-dim2}
Suppose that Assumptions \ref{asp:1}-\ref{asp:4} hold. The simulation cost of each iteration of Algorithm \ref{alg:one-dim} is at most $256\sigma^2\epsilon^{-2}\log\left(4T_{max}/\delta\right)$, where $T_{max}:=\log_{1.5}(N) + 2$. The simulation cost of the sub-problem is at most $24\sigma^2\epsilon^{-2}\log\left(4T_{max}/\delta\right)$.
\end{lemma}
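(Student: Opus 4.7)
The plan is a direct substitution into the confidence half-width formula combined with the termination rule of the inner loop. The key observation is that in each iteration the inner \texttt{repeat/until} loop terminates as soon as \emph{any} of conditions (i), (ii), (iii) is met, so the worst-case sample count is bounded by the smallest $n$ that guarantees condition (iii), regardless of whether (i) or (ii) triggers earlier.

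First, I would substitute $\alpha=\delta/(2T_{max})$ into the half-width formula introduced right after Algorithm \ref{alg:one-dim}, namely
\[
h(n,\sigma,\alpha)=\sqrt{\frac{2\sigma^2}{n}\log(2/\alpha)},
\]
to obtain $h_{1/3}=h_{2/3}=\sqrt{2\sigma^2\log(4T_{max}/\delta)/n}$ after $n$ common simulations at each of $N_{1/3}$ and $N_{2/3}$. Then I would solve the inequality $h(n,\sigma,\delta/(2T_{max}))\le\epsilon/8$ to see that it holds as soon as
\[
n\ \ge\ 128\,\sigma^2\epsilon^{-2}\log(4T_{max}/\delta).
\]
Thus, whether or not conditions (i) or (ii) trigger first, the loop terminates after at most $\lceil 128\sigma^2\epsilon^{-2}\log(4T_{max}/\delta)\rceil$ independent copies at each of the two quantile points, giving a per-iteration simulation cost of at most $256\sigma^2\epsilon^{-2}\log(4T_{max}/\delta)$, which is the first claim.

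For the sub-problem (the line after the while loop), the termination requirement is that each of the at most three remaining points has $1-\delta/(2T_{max})$ confidence half-width at most $\epsilon/2$. Solving $h(n,\sigma,\delta/(2T_{max}))\le\epsilon/2$ yields
\[
n\ \ge\ 8\,\sigma^2\epsilon^{-2}\log(4T_{max}/\delta),
\]
and summing over the at most three points gives the claimed bound $24\sigma^2\epsilon^{-2}\log(4T_{max}/\delta)$.

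There is no real obstacle to this lemma beyond bookkeeping; the only mild subtlety is that the required sample size is an integer while the stated bound treats it as a real number. I would note, as the paper already remarks after Theorem \ref{thm:one-dim}, that rounding up introduces an additive $O(1)$ per iteration which gets absorbed into the $O(\log N)$ correction outside $\log(1/\delta)$ when summed over iterations in the downstream proof of Theorem \ref{thm:one-dim}; within this lemma itself, the stated bound is an upper bound on the theoretical (real-valued) simulation cost and hence is what is used.
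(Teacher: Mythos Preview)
Your proposal is correct and follows essentially the same approach as the paper's proof: both substitute $\alpha=\delta/(2T_{max})$ into the Hoeffding-based half-width formula, solve for the smallest $n$ making $h\le\epsilon/8$ (respectively $h\le\epsilon/2$), and multiply by the number of sampled points (two quantiles, respectively at most three points). Your added remark about the integer-rounding issue is consistent with the paper's own comment after Theorem~\ref{thm:one-dim}.
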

\begin{proof}{Proof.}
We first calculate the confidence interval at each point $x\in\mathcal{X}$. By Assumption \ref{asp:3}, the distribution of $F(x,\xi_x)$ is sub-Gaussian with parameter $\sigma^2$. Hence, the distribution of the empirical mean $\hat{F}_n(x)$ is sub-Gaussian with parameter $\sigma^2/n$ and the Hoeffding bound gives
\[ \mathbb{P}\left[|\hat{F}_n(x) - f(x)| \geq t \right] \leq 2\mathrm{exp}\left(-\frac{nt^2}{2\sigma^2}\right),\quad\forall t\geq0. \]
Next, we estimate the simulation cost of each iteration. Choosing
\[ t_\epsilon = \frac{\epsilon}{8} ,\quad n_{\epsilon,\delta} = \frac{128\sigma^2}{\epsilon^2}\log\left(\frac{4T_{max}}{\delta}\right), \]
we get the deviation bound
\[ \mathbb{P}\left[|\hat{F}_{n_{\epsilon,\delta}}(x) - f(x)| \geq \frac{\epsilon}{8} \right] \leq \frac{\delta}{2T_{max}}. \]
If we simulate $F(\revise{q_{1/3}},\xi_{1/3})$ and $F(\revise{q_{2/3}},\xi_{2/3})$ for $n_{\epsilon,\delta}$ times, the confidence half-width is $\epsilon/4$ and the third condition in Line 8 is satisfied. Hence, the simulation cost of each iteration is at most $2n_{\epsilon,\delta}$. Finally, for the sub-problem, we choose
\[ t_\epsilon = \frac{\epsilon}{2} ,\quad \tilde{n}_{\epsilon,\delta} = \frac{8\sigma^2}{\epsilon^2}\log\left(\frac{4T_{max}}{\delta}\right) \]
and it follows that
\[ \mathbb{P}\left[|\hat{F}_{\tilde{n}_{\epsilon,\delta}}(x) - f(x)| \geq \frac{\epsilon}{2} \right] \leq \frac{\delta}{2T_{max}}. \]
Hence, simulating $\tilde{n}_{\epsilon,\delta}$ times on each point are sufficient and the simulation cost is at most $3\tilde{n}_{\epsilon,\delta}$. 
\hfill\Halmos\end{proof}
Combining Lemmas \ref{lem:one-dim1} and \ref{lem:one-dim2}, we get the total simulation cost of Algorithm \ref{alg:one-dim}.
\begin{lemma}
Suppose that Assumptions \ref{asp:1}-\ref{asp:4} hold. The expected simulation cost of Algorithm \ref{alg:one-dim} is at most
\[ \frac{256T_{max}\sigma^2}{\epsilon^{2}}\log\left(\frac{4T_{max}}{\delta}\right) = O\left[ \frac{\log(N)}{\epsilon^2}\log\left( \frac{1}{\delta} \right) \right], \]
where $T_{max}:=\log_{1.5}(N) + 2$.
\end{lemma}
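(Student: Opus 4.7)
The plan is to obtain the bound by a direct accounting argument that combines the two preceding lemmas, then simplify by checking an elementary inequality for the constants. Lemma \ref{lem:one-dim1} gives that the while-loop in Algorithm \ref{alg:one-dim} runs for at most $\log_{1.5}(N)+1$ iterations; Lemma \ref{lem:one-dim2} gives that each such iteration spends at most $256\sigma^2\epsilon^{-2}\log(4T_{max}/\delta)$ simulation replications, and that the closing sub-problem on at most three points spends at most $24\sigma^2\epsilon^{-2}\log(4T_{max}/\delta)$ further replications. I will sum these three contributions, obtaining
\[
\bigl(\log_{1.5}(N)+1\bigr)\cdot\frac{256\sigma^2}{\epsilon^2}\log\!\frac{4T_{max}}{\delta}\;+\;\frac{24\sigma^2}{\epsilon^2}\log\!\frac{4T_{max}}{\delta},
\]
and then use the elementary inequality $\log_{1.5}(N)+1+\tfrac{24}{256}<\log_{1.5}(N)+2=T_{max}$ to fold the two pieces into $256 T_{max}\sigma^2\epsilon^{-2}\log(4T_{max}/\delta)$, which is exactly the first displayed expression in the lemma.

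For the asymptotic reduction to $O[\log(N)\epsilon^{-2}\log(1/\delta)]$, the plan is to invoke $T_{max}=O(\log N)$ and split $\log(4T_{max}/\delta)\le \log(4T_{max})+\log(1/\delta)$. The $\log(1/\delta)$ piece, multiplied by $T_{max}$, yields the main $\log(N)\epsilon^{-2}\log(1/\delta)$ term after absorbing $\sigma^2$ and the base-change factor between $\log_{1.5}$ and the natural logarithm. The $\log(4T_{max})$ piece yields a $\delta$-independent residual of order $\log(N)\log\log(N)/\epsilon^2$, which will be absorbed using the paper's own $O(\cdot)$ convention declared in Section \ref{sec:notation} (where additive, $\delta$-free slack is permitted, consistent with the $\tilde O$-restatement in Theorem \ref{thm:one-dim} that explicitly rewrites the bound as $\log(N)\epsilon^{-2}\log(\log(N)/\delta)+N$).

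This argument introduces no new probabilistic or geometric content beyond what Lemmas \ref{lem:one-dim1} and \ref{lem:one-dim2} already supply; it is essentially bookkeeping. The only subtlety, and the single point I will flag carefully, is the last-step asymptotic simplification: strictly speaking the honest intermediate bound reads $\log(N)\log(\log(N)/\delta)/\epsilon^2$, and the stated $O[\log(N)\epsilon^{-2}\log(1/\delta)]$ form is obtained only under the paper's relaxed conventions. If a stricter statement is required, I will instead quote the intermediate form, which matches the first equality in Theorem \ref{thm:one-dim} and therefore loses nothing downstream.
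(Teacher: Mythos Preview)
Your proposal is correct and follows essentially the same approach as the paper: the paper likewise sums the per-iteration bound from Lemma~\ref{lem:one-dim2} over the $\log_{1.5}(N)+1$ iterations from Lemma~\ref{lem:one-dim1}, adds the $24\sigma^2\epsilon^{-2}\log(4T_{max}/\delta)$ sub-problem cost, and folds everything into $256T_{max}\sigma^2\epsilon^{-2}\log(4T_{max}/\delta)$ via the same elementary inequality. Your discussion of the asymptotic simplification is in fact more careful than the paper's, which simply asserts the $O[\cdot]$ form without comment.
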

\begin{proof}{Proof.}
By Lemmas \ref{lem:one-dim1} and \ref{lem:one-dim2}, the total simulation cost of the first part is at most
\[ [\log_{1.5}(N)+1]\cdot\frac{256\sigma^2}{\epsilon^{2}}\log\left(\frac{4T_{max}}{\delta}\right) \leq [T_{max} - 1] \cdot \frac{256\sigma^2}{\epsilon^{2}}\log\left(\frac{4T_{max}}{\delta}\right) \]
and the simulation cost of the second part is at most
\[ \frac{24\sigma^2}{\epsilon^{2}}\log\left(\frac{4T_{max}}{\delta}\right). \]
Combining two parts, we know the total simulation cost is at most
\[ [T_{max} - 1] \cdot \frac{256\sigma^2}{\epsilon^{2}}\log\left(\frac{4T_{max}}{\delta}\right) + \frac{24\sigma^2}{\epsilon^{2}}\log\left(\frac{4T_{max}}{\delta}\right) \leq \frac{256T_{max}\sigma^2}{\epsilon^{2}}\log\left(\frac{4T_{max}}{\delta}\right). \]
\hfill\Halmos\end{proof}
Finally, we verify the correctness of Algorithm \ref{alg:one-dim} and get an upper bound on $T(\epsilon,\delta\mathcal{MC})$. 
\begin{proof}{Proof of Theorem \ref{thm:one-dim}.}

We denote $T_{max}:=\log_{1.5}(N) + 2$. We also denote the upper and the lower bound at the beginning of the $k$-th iteration as $\revise{x_{U_k}}$ and $\revise{x_{L_k}}$, respectively. We use the induction method to prove that, for the $k$-th iteration, at least one of the following two events happens with probability at least $1-(k-1)\cdot\delta/T_{max}$:
\begin{itemize}
    \item \textbf{Event-I.} A solution to problem \eqref{eqn:obj} is in $\{\revise{x_{L_k}},\dots,\revise{x_{U_k}}\}$,
    \item \textbf{Event-II.} For any $x\in\{\revise{x_{L_k}},\dots,\revise{x_{U_k}}\}$, it holds $f(x) \leq \min_{y\in\mathcal{X}} f(y) + \epsilon$.
\end{itemize}
When $k=1$, all solutions to problem \eqref{eqn:obj} are in $\mathcal{X} = \{\revise{x_{L_1}},\dots,\revise{x_{U_1}}\}$ and Event-I happens with probability $1$. Suppose the claim is true for the first $k-1$ iterations. We consider the $k$-th iteration. For the $(k-1)$-th iteration, if Event-II happens with probability at least $1-(k-2)\cdot\delta/T_{max}$, then Event-II happens for the $k$-th iteration with the same probability. This is because the interval $\{\revise{x_{L_k}},\dots,\revise{x_{U_k}}\}$ is a subset of $\{\revise{x_{L_{k-1}}},\dots,\revise{x_{U_{k-1}}}\}$ and all points in the new interval satisfy the condition of Event-II.

Hence, we only need to consider the case when only Event-I for the $(k-1)$-th iteration happens with probability at least $1-(k-2)\delta/T_{max}$. We assume Event-I happens and consider conditional probabilities in the following of the proof. We denote
\[ \revise{q_{1/3}} := \lfloor 2\revise{x_{L_{k-1}}}/3+\revise{x_{U_{k-1}}}/3\rfloor,\quad \revise{q_{2/3}} := \lfloor \revise{x_{L_{k-1}}}/3+2\revise{x_{U_{k-1}}}/3\rfloor \]
and discuss by two different cases.
\paragraph{Case I.} Suppose one of the first two conditions in line 8 holds when the sampling process terminates. Since the two conditions are symmetrical, we assume without loss of generality that the first condition holds. Then, the new interval is $[\revise{q_{1/3}},\revise{x_{U_{k-1}}}]$ and, by the definition of confidence interval, we know
\[ f(\revise{q_{1/3}}) \geq f(\revise{q_{2/3}}) \]
holds with probability at least $1-\delta/T_{max}$. We assume the above event and Event-I for the $(k-1)$-th iteration both happen, which has joint probability at least $1-\delta/T_{max}-(k-2)\delta/T_{max} = 1 - (k-1)\delta/T_{max}$. By the convexity of $f(x)$, it holds that
\[ f(x) \geq f(\revise{q_{1/3}}) + \frac{x - \revise{q_{1/3}}}{\revise{q_{1/3}} - \revise{q_{2/3}}} \left[ f(\revise{q_{1/3}}) - f(\revise{q_{2/3}}) \right] \geq f(\revise{q_{1/3}}),\quad\forall x\in\{\revise{x_{L_{k-1}}},\dots,\revise{q_{1/3}}\}. \]
Hence, the minimum of $f(x)$ in $\{\revise{x_{L_{k-1}}},\dots,\revise{x_{U_{k-1}}}\}$ is attained by a point in $\{\revise{q_{1/3}},\dots,\revise{x_{U_{k-1}}}\}$. Combining with the assumption that there exists a solution to problem \eqref{eqn:obj} in $\{\revise{x_{L_{k-1}}},\dots,\revise{x_{U_{k-1}}}\}$, we know that there exists a solution to problem \eqref{eqn:obj} in $\{\revise{q_{1/3}},\dots,\revise{x_{U_{k-1}}}\}$. Thus, Event-I for the $k$-th iteration happens with probability at least $1-(k-1)\delta/T_{max}$. 

\paragraph{Case II.} Suppose only the last condition in line 8 holds when the sampling process terminates. Since the first two conditions in line 8 do not hold, we have
\begin{align}\label{eqn:one-dim-1} \left|\hat{F}_n(\revise{q_{1/3}}) - \hat{F}_n(\revise{q_{2/3}})\right| \leq \epsilon / 4. \end{align}
In addition, by the definition of confidence interval, it holds
\[ \left| f(\revise{q_{1/3}}) - \hat{F}_n(\revise{q_{1/3}}) \right| \leq \epsilon / 8,\quad \left| f(\revise{q_{2/3}}) - \hat{F}_n(\revise{q_{2/3}}) \right| \leq \epsilon / 8 \]
with probability at least $1-\delta/T_{max}$. Combining with inequality \eqref{eqn:one-dim-1}, we know
\begin{align}\label{eqn:one-dim-2} \left| f(\revise{q_{1/3}}) - f(\revise{q_{2/3}}) \right| \leq \epsilon / 2 \end{align}
holds with probability at least $1-\delta/T_{max}$. We assume that the above event and Event-I for the $(k-1)$-th iteration both happen, which has joint probability at least $1-\delta/T_{max}-(k-2)\delta/T_{max} = 1 - (k-1)\delta/T_{max}$. We prove that if Event-I for the $k$-th iteration does not happen, then Event-II for the $k$-th iteration happens. Under the condition that Event-I does not happen, we assume without loss of generality that solutions to problem \eqref{eqn:obj} are in $\{\revise{x_{L_{k-1}}},\dots,\revise{q_{1/3}}-1\}$.
%
Using the convexity of function $f(x)$, we know
\[ f(x) \geq f(\revise{q_{1/3}}) - \frac{\revise{q_{1/3}} - x}{\revise{q_{2/3}} - \revise{q_{1/3}}} \left[f(\revise{q_{1/3}}) - f(\revise{q_{2/3}}) \right] ,\quad\forall x\in\{\revise{x_{L_{k-1}}},\dots, \revise{q_{1/3}}\}. \]
Choosing
\[ x \in \left(\argmin_{y\in\mathcal{X}}~f(y)\right) \cap \{\revise{x_{L_{k-1}}},\dots,\revise{x_{U_{k-1}}}\} \neq \emptyset, \]
we get
\begin{align*}
    \min_{y\in\mathcal{X}}~f(y) &\geq f(\revise{q_{1/3}}) - \frac{\revise{q_{1/3}} - x}{\revise{q_{2/3}} - \revise{q_{1/3}}} \left[f(\revise{q_{1/3}}) - f(\revise{q_{2/3}}) \right]\\
    &\geq f(\revise{q_{1/3}}) - \frac{\revise{q_{1/3}} - \revise{x_{L_{k-1}}}}{\revise{q_{2/3}} - \revise{q_{1/3}}} \cdot \epsilon / 2 \geq f(\revise{q_{1/3}}) - \epsilon / 2,
\end{align*}
where the last inequality is from the definition of $3$-quantiles. Combining with inequality \eqref{eqn:one-dim-2}, we get
\[ \min_{y\in\mathcal{X}}~f(y) \geq f(\revise{q_{2/3}}) - \epsilon. \]
By the convexity of $f(x)$, it holds that
\[ \max_{x\in\{\revise{q_{1/3}},\dots,\revise{q_{2/3}}\}}~f(x) = \max\{ f(\revise{q_{1/3}}) , f(\revise{q_{2/3}}) \} \leq \min_{y\in\mathcal{X}}~f(y) + \epsilon, \]
which means Event-II for the $k$-th iteration happens. 

Combining the two cases, we know the claim holds for the $k$-th iteration. Suppose there are $T$ iterations in Algorithm \ref{alg:one-dim}. By Lemma \ref{lem:one-dim1}, we have $T\leq T_{max}-1$. By the induction method, the last interval $\{ \revise{x_{L_{T+1}}},\dots,\revise{x_{U_{T+1}}} \}$ satisfies the condition in Event-I or Event-II with probability at least $1-T\cdot\delta/T_{max} \geq 1-\delta + \delta/T_{max}$. If Event-II happens with probability at least $1-\delta + \delta/T_{max}$, then regardless of the point chosen in the sub-problem, the solution returned by the algorithm has value at most $\epsilon$ larger than the optimal value with probability at least $1-\delta + \delta/T_{max} \geq 1-\delta$. Hence, the solution satisfies the $(\epsilon,\delta)$-PGS guarantee. Otherwise, we assume Event-I happens with probability at least $1-\delta + \delta/T_{max}$. Then, a solution to problem \eqref{eqn:obj} is in $\{\revise{x_{L_{T+1}}},\dots,\revise{x_{U_{T+1}}}\}$. We choose
\[ x^* \in \left(\argmin_{x\in\mathcal{X}}~f(x)\right) \cap \{\revise{x_{L_{T+1}}},\dots,\revise{x_{U_{T+1}}}\} \]
and suppose the algorithm returns
\[ x^{**}\in \argmin_{x\in\{\revise{x_{L_{T+1}}},\dots,\revise{x_{U_{T+1}}}\}}~\hat{F}_n(x). \]
By the definition of confidence interval, it holds 
\[ f(x^{**}) \leq \hat{F}_n(x^{**}) + \epsilon/2,\quad f(x^{*}) \geq \hat{F}_n(x^{*}) - \epsilon/2 \]
with probability at least $1-\delta/T_{max}$. Under the above event, we get
\[ f(x^{**}) \leq \hat{F}_n(x^{**}) + \epsilon/2 \leq \hat{F}_n(x^{*}) + \epsilon/2 \leq f(x^*) + \epsilon. \]
Recalling that Event-I happens with probability at least $1-\delta + \delta/T_{max}$, the point $x^{**}$ satisfies the above relation with probability at least $1-\delta$ and therefore satisfies the $(\epsilon,\delta)$-PGS guarantee. Combining with the first case, we know Algorithm \ref{alg:one-dim} is an $[(\epsilon,\delta)$-PGS$,\mathcal{MC}]$-algorithm.
\hfill\Halmos\end{proof}

\subsection{Proof of Theorem \ref{thm:one-dim-uni}}
\label{ec:one-dim-uni}

We first estimate the simulation cost of Algorithm \ref{alg:one-dim-uni}. 
\begin{lemma}
Suppose that Assumptions \ref{asp:1}-\ref{asp:4} hold. The expected simulation cost for Algorithm \ref{alg:one-dim-uni} is at most
\[ \frac{25600\sigma^2}{\epsilon^2}\log\left[\frac{4N}{\delta}\right] = O\left[ \frac{1}{\epsilon^{2}}\log\left(\frac{1}{\delta}\right) \right]. \]
\end{lemma}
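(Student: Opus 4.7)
The plan is to bound the total number of simulation evaluations by a careful accounting argument that exploits the adaptive step-size structure, as outlined informally in the paragraph preceding the theorem statement. First, I would use Hoeffding's inequality to translate the per-point stopping condition $h_x \leq |\mathcal{S}|\cdot \epsilon/80$ into a concrete sample-count bound: since $h(n,\sigma,\delta/(2T_{max})) = \sqrt{2\sigma^2 n^{-1}\log(4T_{max}/\delta)}$ with $T_{max}=N$, once $n_x \geq 12800\, \sigma^2 |\mathcal{S}|^{-2}\epsilon^{-2}\log(4N/\delta)$ the individual sampling of $x$ at the current iteration halts. Because the sample counts are monotone non-decreasing across while-loop iterations, the final cumulative count $n_x$ at the moment $x$ exits $\mathcal{S}$ is controlled by the value of $|\mathcal{S}|$ at the last iteration in which $x$ was active.

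Next I would linearly order the $N$ points by the time they leave $\mathcal{S}$ (breaking ties arbitrarily within the same outer iteration, as noted in the text), and denote by $x_k$ the $k$-th-last point removed. The key structural observation is that at the iteration when $x_k$ is removed, the $k$ points $x_1,\ldots,x_k$ are all still in $\mathcal{S}$, so $|\mathcal{S}|\geq k$. Whether the iteration terminates by a Type-I or a Type-II operation is immaterial: in either case the bound above yields
\[
n_{x_k} \;\leq\; \frac{12800\,\sigma^2}{k^2\epsilon^2}\log\!\left(\frac{4N}{\delta}\right).
\]

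Summing over $k\in[N]$ and using $\sum_{k\geq1} k^{-2} < \pi^2/6 < 2$ gives a total cost for the while-loop of at most $25600\,\sigma^2 \epsilon^{-2}\log(4N/\delta)$. I would then handle the post-loop step, where at most two surviving points are sampled down to confidence width $\epsilon/4$: by Hoeffding this requires only $O(\sigma^2\epsilon^{-2}\log(N/\delta))$ further evaluations per point, which is absorbed into the same order. Combining the two contributions and tracking the constants yields the stated bound $\tfrac{25600\sigma^2}{\epsilon^2}\log(4N/\delta) = \tilde O(\epsilon^{-2}\log(1/\delta))$.

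The main obstacle is the ordering/indexing argument that $|\mathcal{S}|\geq k$ at the removal time of $x_k$: one must be careful with Type-II operations that discard roughly half of $\mathcal{S}$ at once, and verify that after the doubling of step size $d$ and re-definition $\mathcal{S}\leftarrow\{x_{\min}, x_{\min}+d,\ldots\}$, every point that is dropped has indeed had $|\mathcal{S}|\geq k$ during its final sampling round. The rest of the argument is routine (Hoeffding plus the convergent series $\sum k^{-2}$), and the correctness claim that Algorithm~\ref{alg:one-dim-uni} returns an $(\epsilon,\delta)$-PGS solution is handled separately by the analysis underlying Theorem~\ref{thm:one-dim-uni}, which this lemma complements on the cost side.
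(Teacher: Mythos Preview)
Your proposal is correct and in fact follows the informal ``rough estimate'' sketched in the paragraph preceding Theorem~\ref{thm:one-dim-uni} in the main text. However, the paper's formal proof takes a slightly different route: rather than ordering individual points by removal time and invoking $\sum_{k\geq 1}k^{-2}<2$, it groups samples by \emph{iteration}, bounds the cost charged to points discarded in iteration $k$ by $(|\mathcal{S}_k|-|\mathcal{S}_{k+1}|)\cdot n(|\mathcal{S}_k|)$ with $n(m)=12800\sigma^2 m^{-2}\epsilon^{-2}\log(4N/\delta)$, and then telescopes via the elementary inequality $(a-b)/a^2 \leq 1/b-1/a$ to obtain a discarded-point cost of at most $12800\sigma^2\epsilon^{-2}\log(4N/\delta)$; a separate bound of the same size for the surviving points in $\mathcal{S}_{T+1}$ (which also absorbs the final sub-problem step, since $n(|\mathcal{S}_{T+1}|)$ dominates the width-$\epsilon/4$ requirement) gives the total $25600$. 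Your per-point ordering argument is equally valid and arguably more intuitive---it is, after all, the argument the authors advertise---and the telescoping buys only a marginally cleaner constant. The worry you flag about bulk removals in Type-II operations is in fact unfounded: if $x_k$ is the $k$-th-last point to leave, then by definition $x_1,\ldots,x_k$ are all present at the start of the iteration in which $x_k$ departs, regardless of how many points leave simultaneously, so $|\mathcal{S}|\geq k$ holds automatically.
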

\begin{proof}{Proof.}
Denote $T_{max}:=N$. Suppose there are $T$ iterations in Algorithm \ref{alg:one-dim-uni}. We denote $\mathcal{S}_k$ as the active set at the beginning of the $k$-th iteration. Since each iteration reduces the size of $\mathcal{S}_k$ by at least $1$, it follows that
\[ |\mathcal{S}_k| \geq |\mathcal{S}_{T+1}| + T + 1 - k ,\quad\forall k\in[T+1]. \]
By the same analysis as Lemma \ref{lem:one-dim2}, we know that for the $k$-th iteration, simulating
\[ n(|\mathcal{S}_k|) := \frac{12800\sigma^2}{|\mathcal{S}_k|^2\epsilon^2}\log\left(\frac{4T_{max}}{\delta}\right) \]
times is sufficient to achieve $1-\delta/(2T_{max})$ confidence half-width $|\mathcal{S}_k|/80\cdot\epsilon$. Considering the condition on line 8, each point discarded during the $k$-th iteration is simulated at most $n(|\mathcal{S}_k|)$ times. Hence, the total number of simulations on points discarded during the $k$-th iteration is at most
\begin{align*} 
\left(|\mathcal{S}_k| - |\mathcal{S}_{k+1}|\right) \cdot n(|\mathcal{S}_k|) &= \frac{|\mathcal{S}_k| - |\mathcal{S}_{k+1}|}{|\mathcal{S}_k|^2} \cdot \frac{12800\sigma^2}{\epsilon^2}\log\left(\frac{4T_{max}}{\delta}\right)\\
&\leq \left(\frac{1}{|\mathcal{S}_{k+1}|} - \frac{1}{|\mathcal{S}_k|}\right) \cdot \frac{12800\sigma^2}{\epsilon^2}\log\left(\frac{4T_{max}}{\delta}\right),
\end{align*}
where the inequality is because of $|\mathcal{S}_k| \geq |\mathcal{S}_{k+1}|$. Summing over $k=1,2,\dots,T$, we get the number of simulations on all discarded points during iterations is at most
\begin{align*} 
&\sum_{k=1}^T~\left(\frac{1}{|\mathcal{S}_{k+1}|} - \frac{1}{|\mathcal{S}_k|}\right) \cdot \frac{12800\sigma^2}{\epsilon^2}\log\left(\frac{4T_{max}}{\delta}\right) = \left(\frac{1}{|\mathcal{S}_{T+1}|} - \frac{1}{|\mathcal{S}_1|}\right) \cdot \frac{12800\sigma^2}{\epsilon^2}\log\left(\frac{4T_{max}}{\delta}\right)\\
& \hspace{16em}\leq \left(1 - \frac{1}{N}\right) \cdot \frac{12800\sigma^2}{\epsilon^2}\log\left(\frac{4T_{max}}{\delta}\right)
\leq \frac{12800\sigma^2}{\epsilon^2}\log\left(\frac{4T_{max}}{\delta}\right).
\end{align*}
For points in the last active set $\mathcal{S}_{T+1}$, the number of simulations is bounded by
\[ |\mathcal{S}_{T+1}|\cdot n(|\mathcal{S}_{T+1}|) = \frac{12800\sigma^2}{|\mathcal{S}_{T+1}|\epsilon^2}\log\left(\frac{4T_{max}}{\delta}\right) \leq \frac{12800\sigma^2}{\epsilon^2}\log\left(\frac{4T_{max}}{\delta}\right). \]
Considering two parts, we know that the simulation cost of Algorithm \ref{alg:one-dim-uni} is at most
\[ \frac{25600\sigma^2}{\epsilon^2}\log\left(\frac{4T_{max}}{\delta}\right). \]

\hfill\Halmos\end{proof}
Then, we prove the correctness of Algorithm \ref{alg:one-dim-uni}. The following lemma plays a critical role in verifying the correctness of Type-II Operations.
\begin{lemma}\label{lem:one-dim-uni3}
Suppose function $h(x)$ is convex on $[1,M+a]$, where integer $M\geq 3$ and constant $a\in[0,1]$. Then, the restriction of function $h(x)$ to $[M]$, which we denote as $\tilde{h}(x)$, is also convex. Furthermore, given a constant $\epsilon>0$, if it holds that
\begin{align}\label{eqn:one-dim-cor-1} \max_{x\in[M]}~\tilde{h}(x) - \min_{x\in[M]}~\tilde{h}(x) \leq M/20 \cdot \epsilon, \end{align}
then we know
\[ \min_{y\in[M']}~\tilde{h}(2y-1) - \min_{x\in[1,M+a]}~{h}(x) \leq \epsilon/2. \]
where we define $M' := \lceil M/2 \rceil$.
\end{lemma}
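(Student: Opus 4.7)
The convexity of $\tilde{h}$ on $[M]$ is immediate: applying the midpoint convexity $h((x+y)/2) \leq (h(x) + h(y))/2$ of $h$ with $x = i-1$ and $y = i+1$ for $i \in \{2,\ldots,M-1\}$ yields $\tilde{h}(i-1) + \tilde{h}(i+1) \geq 2\tilde{h}(i)$. For the quantitative part, the plan is to decompose the target gap $\min_{y\in[M']} \tilde{h}(2y-1) - h^*$, where $h^* := \min_{x \in [1,M+a]} h(x)$, into two pieces: (a) the gap from the best admissible odd integer to the discrete integer minimizer $i_0 \in \argmin_{i \in [M]} \tilde{h}(i)$, and (b) the gap from $\tilde{h}(i_0)$ down to $h^*$. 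Both pieces will be controlled via convexity-induced slope bounds and the oscillation bound $\Delta := M\epsilon/20$.

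For piece (a), I will introduce the forward differences $\delta_i := \tilde{h}(i+1) - \tilde{h}(i)$. Convexity of $\tilde{h}$ forces the $\delta_i$ to be non-decreasing, so $\delta_i \geq 0$ for $i \geq i_0$ and $\delta_i \leq 0$ for $i < i_0$, and telescoping gives $\sum_{i=i_0}^{M-1} \delta_i = \tilde{h}(M) - \tilde{h}(i_0) \leq \Delta$. Because these summands are non-negative and non-decreasing, the smallest satisfies $\delta_{i_0} \leq \Delta/(M - i_0)$; a symmetric argument on the left side gives $-\delta_{i_0-1} \leq \Delta/(i_0 - 1)$. Since $\max(i_0 - 1, M - i_0) \geq (M-1)/2$, one of the two one-step neighbors of $i_0$ has $\tilde{h}$-value at most $\tilde{h}(i_0) + 2\Delta/(M-1)$. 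Setting $\bar{y} := i_0$ when $i_0$ is odd, or $\bar{y} := i_0 \pm 1$ (the neighbor on the longer side, which always lies in the admissible set $\{1,3,\ldots,2M'-1\}$) when $i_0$ is even, bounds piece (a) by $2\Delta/(M-1) \leq M\epsilon/(10(M-1))$.

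For piece (b), let $x^* \in \argmin_{x \in [1,M+a]} h(x)$. If $x^* \in [1, M]$, I will choose an integer $i^\flat \in [M]$ within distance $1$ of $x^*$ and a far endpoint $z \in \{1, M\}$ so that $i^\flat$ lies strictly between $x^*$ and $z$; convexity of $h$ yields $h(i^\flat) \leq (1-t) h(x^*) + t h(z)$ with $t = |i^\flat - x^*|/|z - x^*| = O(1/M)$, and combining with $h(z) - h(x^*) \leq \Delta + (h(i^\flat) - h^*)$ (from the oscillation bound together with $\tilde{h}(i_0) \leq h(i^\flat)$) gives, after rearrangement, $h(i^\flat) - h^* \leq t\Delta/(1-t) = O(\Delta/M)$; since $\tilde{h}(i_0) \leq h(i^\flat)$, the same bound governs piece (b). If instead $x^* \in (M, M+a]$, then $h$ is non-increasing on $[1, x^*]$, forcing $i_0 = M$, and a symmetric three-point inequality with $z = 1 < M < x^*$ produces $h(M) - h^* \leq (x^* - M)/(M - 1) \cdot \Delta \leq \Delta/(M-1)$. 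Summing (a) and (b), both of which scale like $\Delta/M = \epsilon/20$ up to small constants, delivers the required bound $\epsilon/2$. The main technical obstacle will be the boundary case analysis: for $i_0$ near an endpoint of $[M]$ or $x^*$ near $M$, the comparison point $z$ and the integer $i^\flat$ must be selected on the correct side to guarantee $t = O(1/M)$, which is always feasible because $\max(i_0 - 1, M - i_0) \geq (M-1)/2$ and, in the case $x^* \in (M, M+a]$, the full length $M-1$ to the left of $M$ remains available.
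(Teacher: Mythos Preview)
Your approach is correct and essentially the same as the paper's: both rely on convexity-driven secant/interpolation inequalities against a far endpoint, combined with the oscillation bound $\Delta = M\epsilon/20$. The only cosmetic difference is that the paper splits your piece~(b) into two separate steps (first bounding $\min_{[1,M]} h - \min_{[1,M+a]} h \le \epsilon/10$, then $\min_{[M]} \tilde h - \min_{[1,M]} h \le \epsilon/5$), which lets it use the direct bound $h(z)-h(\text{integer}) \le \Delta$ instead of your self-referential inequality $h(z)-h^* \le \Delta + (h(i^\flat)-h^*)$; both routes close with the same constants once you note $|z-i^\flat|\ge 1$ so that $t/(1-t)<1$ even for $M=3,4$.
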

\begin{proof}{Proof.}
Since the midpoint convexity of $h(x)$ implies the discrete midpoint convexity of $\tilde{h}$, we know $\tilde{h}(x)$ is also convex. We prove the second claim in three steps. 

\paragraph{Step 1.} We first prove that
\begin{align}\label{eqn:one-dim-cor-4} \min_{x\in[1,M]}~{h}(x) - \min_{x\in[1,M+a]}~{h}(x) \leq \epsilon / 10. \end{align}
Suppose $x^*$ is a minimizer of $h(x)$ on $[1,M+a]$. If $x^*\in[1,M]$, then inequality \eqref{eqn:one-dim-cor-4} holds trivially. We assume that $x^* \in (M,M+a]$. By the convexity of $h(x)$, we have
\[ h(M) - h(x^*) \leq \frac{x^* - M}{M - 1}\cdot\left[ h(M) - h(1) \right] \leq \frac{1}{M - 1}\cdot\left[ h(M) - h(1) \right] \leq \frac{M/20\cdot\epsilon}{M/2} = \epsilon / 10. \]
Hence, we know
\[  \min_{x\in[1,M]}~{h}(x) - \min_{x\in[1,M+a]}~{h}(x) = \min_{x\in[1,M]}~{h}(x) - {h}(x^*) \leq h(M) - h(x^*) \leq \epsilon / 10. \]

\paragraph{Step 2.} Next, we prove that
\begin{align}\label{eqn:one-dim-cor-2} \min_{x\in[M]}~\tilde{h}(x) - \min_{x\in[1,M]}~{h}(x) \leq \epsilon/5. \end{align}
Let $x^*$ be a minimizer of $\tilde{h}(x)$ on $[M]$. By inequality \eqref{eqn:one-dim-cor-1}, we know
\[ \max_{x\in[M]}~\tilde{h}(x) = \max\{\tilde{h}(1),\tilde{h}(M)\} \leq \tilde{h}(x^*) + M/20\cdot\epsilon. \]
By the convexity of $h(x)$, there exists a minimizer $x^{**}\in[1,M]$ of $h(x)$ in $(x^*-1,x^*+1)$. If $x^{**}=x^*$, then $\min \tilde{h}(x) = \min h(x)$ and inequality \eqref{eqn:one-dim-cor-2} holds. Hence, we assume that $x^{**}\neq x^*$ and, without loss of generality, $x^{**}\in(x^*,x^*+1)$. Since $(M-x^*-1) + (x^*-1) = M-2$, we know $\max\{M-x^*-1,x^*-1\} \geq \lceil (M-2)/2 \rceil$. We first consider the case when
\[ M-x^*-1 \geq \lceil (M-2)/2 \rceil. \]
By the convexity of $h(x)$, we have
\begin{align*}
    h(x^*+1) - h(x^{**}) &\leq \frac{x^*+1 - x^{**}}{M-x^*-1} \cdot \left[ h(M) - h(x^* + 1) \right]\\
    &\leq \frac{1}{\lceil (M-2)/2 \rceil} \cdot \left[ h(M) - h(x^*) \right] \leq \frac{M/20\cdot \epsilon}{\lceil (M-2)/2 \rceil}.
\end{align*}
By simple calculations, we get $M / 4 \leq \lceil (M-2)/2 \rceil$ for all $M\geq3$ and therefore
\[ h(x^*) - h(x^{**}) \leq h(x^*+1) - h(x^{**}) \leq \epsilon / 5, \]
which means inequality \eqref{eqn:one-dim-cor-2} holds. Now we consider the case when
\[ x^* - 1 \geq \lceil (M-2)/2 \rceil. \]
Similarly, by the convexity of $h(x)$, we have
\[ h(x^*) - h(x^{**}) \leq \frac{x^{**} - x^*}{x^*-1} \cdot \left[ h(x^*) - h(1) \right] \leq \frac{1}{\lceil (M-2)/2 \rceil} \cdot \left[ h(x^*) - h(1) \right] \leq \frac{M/20\cdot \epsilon}{\lceil (M-2)/2 \rceil} \leq \epsilon / 5. \]
Combining the two cases, we know inequality \eqref{eqn:one-dim-cor-2} holds.

\paragraph{Step 3.} Finally, we prove that
\begin{align}\label{eqn:one-dim-cor-3} \min_{y\in[M']}~\tilde{h}(2y-1) - \min_{x\in[M]}~\tilde{h}(x) \leq \epsilon/5. \end{align}
Let $x^*$ be a minimizer of $\tilde{h}(x)$. If $x^*$ is an odd number, then $\min_{y\in[M']}~\tilde{h}(2y-1) = \min_{x\in[M]}~\tilde{h}(x)$ and inequality \eqref{eqn:one-dim-cor-3} holds. Otherwise, we assume $x^* = 2y^*$ is an even number. Then, by the convexity of $\tilde{h}(x)$, there exists a minimizer of $\tilde{h}(2y-1)$ in $\{y^*, y^* + 1\}$. Without loss of generality, we assume $y^* + 1$ is a minimizer of $\tilde{h}(2y-1)$. Since $(M-x^*) + (x^*-1) = M-1$, we have $\max\{ M-x^*, x^* - 1 \} \geq \lceil (M-1)/2\rceil$. We first consider the case when
\[ M-x^* \geq \lceil (M-1)/2\rceil. \]
By the convexity of $\tilde{h}(x)$, we have
\[ \tilde{h}(2y^*+1) - \tilde{h}(2y^*) \leq \frac{1}{M-2y^*}\cdot\left[ \tilde{h}(M) - \tilde{h}(2y^*) \right] \leq \frac{M/20\cdot\epsilon}{\lceil (M-1)/2\rceil}. \]
We can verify that $M/4 \leq \lceil (M-1)/2\rceil$ for all $M\geq3$. Hence, it holds that
\[ \tilde{h}(2y^*+1) - \tilde{h}(2y^*) \leq \epsilon / 5. \]
Then, we consider the case when
\[ x^* - 1 \geq \lceil (M-1)/2\rceil. \]
Similarly, using the convexity of $\tilde{h}(x)$, we have
\[ \tilde{h}(2y^*-1) - \tilde{h}(2y^*) \leq \frac{1}{2y^*-1}\cdot\left[ \tilde{h}(2y^*) - \tilde{h}(1) \right] \leq \frac{M/20\cdot\epsilon}{\lceil (M-1)/2\rceil} \leq \epsilon / 5, \]
which implies that
\[ \tilde{h}(2y^*+1) - \tilde{h}(2y^*) \leq \tilde{h}(2y^*-1) - \tilde{h}(2y^*) \leq \epsilon/5. \]
Combining the two cases, we know inequality \eqref{eqn:one-dim-cor-3} holds.

By inequalities \eqref{eqn:one-dim-cor-4}, \eqref{eqn:one-dim-cor-2} and \eqref{eqn:one-dim-cor-3}, we have
\[ \min_{y\in[M']}~\tilde{h}(2y-1) - \min_{x\in[1,M+a]}~{h}(x) \leq \epsilon/10 + \epsilon / 5 + \epsilon / 5 = \epsilon/2. \]
\hfill\Halmos\end{proof}
We denote $\mathcal{S}_k$ and $d_k$ as the active set and the step size at the beginning of the $k$-th iteration, respectively. We define the upper bound and the lower bound for the $k$-th iteration as
\begin{align*} 
&\revise{x_{\revise{x_{L_1}}}} := 1,\quad \revise{x_{L_{k+1}}} := \begin{cases} y+d_k&\text{if the second case of Type-I Operation happens}\\ \revise{x_{L_k}}&\text{otherwise}, \end{cases}\\
&\revise{x_{\revise{x_{U_1}}}} := N,\quad \revise{x_{U_{k+1}}} := \begin{cases} y-d_k&\text{if the first case of Type-I Operation happens}\\ \revise{x_{U_k}}&\text{otherwise}. \end{cases}
\end{align*}
Although not explicitly defined in the algorithm, the interval $\{\revise{x_{L_k}},\dots,\revise{x_{U_k}}\}$ plays a similar role as in the tri-section sampling algorithm and characterizes the set of possible solutions. In the following lemma, we prove that the active set $\mathcal{S}_k$ is a good approximation to the interval $\{\revise{x_{L_k}},\dots,\revise{x_{U_k}}\}$. We note that the following lemma is deterministic.
\begin{lemma}\label{lem:one-dim-uni4}
For any iteration $k$, we have
\begin{align}\label{eqn:one-dim-cor-5} \revise{x_{L_k}} = \min~\mathcal{S}_k \quad\text{and}\quad \revise{x_{U_k}} \leq \max~\mathcal{S}_k + d_k. \end{align}
\end{lemma}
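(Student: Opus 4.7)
The plan is to prove both claims simultaneously by induction on the iteration index $k$, carrying along an auxiliary structural invariant: at the start of each iteration, $\mathcal{S}_k$ is an arithmetic progression with common difference $d_k$ and minimum element $L_k$. The base case $k=1$ is immediate from $\mathcal{S}_1 = [N]$, $d_1 = 1$, $L_1 = 1$, $U_1 = N$, which gives $\min \mathcal{S}_1 = 1 = L_1$, preserves the AP invariant trivially, and yields $U_1 = N \le N+1 = \max \mathcal{S}_1 + d_1$.

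For the inductive step, I would analyse each of the three possible updates. If a Type-I operation of the first sub-case occurs (some $x<y$ in $\mathcal{S}_k$ trigger condition~(i) in Algorithm~\ref{alg:one-dim-uni}), the AP invariant together with $y\in\mathcal{S}_k$ and $x\in\mathcal{S}_k$ with $x<y$ forces $y-d_k\in\mathcal{S}_k$. Removing all $z\ge y$ then leaves a new AP with the same minimum element $L_k = L_{k+1}$ and maximum element exactly $y-d_k = U_{k+1}$; both claims follow at once with $d_{k+1}=d_k$. The second sub-case ($x>y$) is symmetric: the next AP element after $y$ is $y+d_k$ and lies in $\mathcal{S}_k$, so $\min \mathcal{S}_{k+1} = y+d_k = L_{k+1}$, while the maximum of $\mathcal{S}$ and the bound $U$ are unchanged, so the upper-bound claim reduces to the inductive hypothesis.

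The main nontrivial case is the Type-II operation, in which $d_{k+1}=2d_k$ and the new active set is $\mathcal{S}_{k+1} = \{x_{\min}, x_{\min}+2d_k, \dots, x_{\min}+(\lceil|\mathcal{S}_k|/2\rceil-1)\cdot 2d_k\}$ with $x_{\min}=\min \mathcal{S}_k$. The AP invariant is preserved with new step $2d_k$, and $\min \mathcal{S}_{k+1} = x_{\min} = L_k = L_{k+1}$ gives the first claim immediately. For the upper-bound claim, the AP invariant for $\mathcal{S}_k$ gives $\max \mathcal{S}_k = x_{\min} + (|\mathcal{S}_k|-1)d_k$, so the inductive hypothesis becomes $U_k \le x_{\min} + |\mathcal{S}_k|\,d_k$. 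On the other hand, $\max \mathcal{S}_{k+1} + d_{k+1} = x_{\min} + 2\lceil |\mathcal{S}_k|/2\rceil\, d_k$, and the desired bound follows from the elementary inequality $|\mathcal{S}_k|\le 2\lceil |\mathcal{S}_k|/2\rceil$, combined with $U_{k+1}=U_k$.

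The only subtle point is carrying the AP invariant as a hidden part of the induction; without it one cannot place $y\pm d_k$ inside $\mathcal{S}_k$ in the Type-I analysis, nor relate $\max \mathcal{S}_k$ to $\min \mathcal{S}_k$ and $|\mathcal{S}_k|$ in the Type-II analysis. The small asymmetry between the two claims (equality for $L_k$, inequality for $U_k$) is explained precisely by the ceiling in the Type-II update, which can leave the new maximum up to one step short of the old one.
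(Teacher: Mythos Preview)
Your proof is correct and follows essentially the same inductive case analysis as the paper. The main difference is that you make the arithmetic-progression invariant explicit, whereas the paper uses it implicitly (e.g., when asserting $\max\mathcal{S}_k = y - d_{k-1}$ after a Type-I removal, or $\max\mathcal{S}_k \in \{\max\mathcal{S}_{k-1} - d_{k-1}, \max\mathcal{S}_{k-1}\}$ after a Type-II halving); carrying it explicitly is cleaner but not a genuinely different route.
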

\begin{proof}{Proof.}
We use the induction method to prove the result. When $k=1$, we know $\revise{x_{L_1}}=1,\revise{x_{U_1}}=N,\mathcal{S} = [N]$ and $d_1 = 1$. Hence, the relations in \eqref{eqn:one-dim-cor-5} hold. We assume these relations hold for the first $k-1$ iterations. We discuss by two different cases.

\paragraph{Case I.} Type-I Operation is implemented during the $(k-1)$-th iteration. If the first case of Type-I Operation happens, then we know $\revise{x_{L_k}} = \revise{x_{L_{k-1}}}$ and $\revise{x_{U_k}} = y-d_{k-1}$. By the updating rule, the step size $d_{k-1}$ is not changed and all points in $\mathcal{S}_{k-1}$ that are at least $y$ are discarded from $\mathcal{S}_{k-1}$. Hence, it follows that $\max~\mathcal{S}_k = \revise{x_{U_k}}$ and the inequality $\revise{x_{U_k}} \leq \max~\mathcal{S}_k + d_k$ holds. Moreover, since both $\revise{x_{L_k}}$ and $\min~\mathcal{S}_{k-1}$ are not changed, the equality $\revise{x_{L_k}} = \min~\mathcal{S}_k$ still holds. 

Otherwise if the second case of Type-I Operation happens, then we know $\revise{x_{L_k}} = y + d_{k-1}$ and $\revise{x_{U_k}} = \revise{x_{U_{k-1}}}$. Similarly, we can prove that $\revise{x_{L_k}} = \min~\mathcal{S}_k$. Moreover, since $d_k = d_{k-1}$ and $\max~\mathcal{S}_{k-1}+d_{k-1} = \max~\mathcal{S}_{k} + d_{k-1}$, it holds
\[ \revise{x_{U_k}} = \revise{x_{U_{k-1}}} \leq \max~\mathcal{S}_{k-1}+d_{k-1} = \max~\mathcal{S}_{k}+d_{k}. \]

\paragraph{Case II.} Type-II Operation is implemented during the $(k-1)$-th iteration. In this case, bounds $\revise{x_{L_{k-1}}}$ and $\revise{x_{U_{k-1}}}$ are not changed. By the update rule, we know the step size $d_k=2d_{k-1}$ and
\begin{align}\label{eqn:one-dim-cor-6} \min~\mathcal{S}_{k} = \min~\mathcal{S}_{k-1},\quad \max~\mathcal{S}_k \in \{ \max~\mathcal{S}_{k-1} - d_{k-1}, \max~\mathcal{S}_{k-1}\}. \end{align}
Thus, the equality $\revise{x_{L_k}} = \min~\mathcal{S}_k$ still holds. By the induction assumption, we know that
\[ \revise{x_{U_k}} \leq \revise{x_{U_{k-1}}} \leq \max~\mathcal{S}_{k-1} + d_{k-1}. \]
Combining with the latter relation in \eqref{eqn:one-dim-cor-6}, we get
\[ \revise{x_{U_k}} \leq \max~\mathcal{S}_{k-1}+d_{k-1} \leq \max~\mathcal{S}_{k}+2d_{k-1} = \max~\mathcal{S}_{k}+d_{k}. \]

Combining the two cases, we know the relations in \eqref{eqn:one-dim-cor-5} hold for the $k$-th iteration. By the induction method, the relations hold for all iterations.
\hfill\Halmos\end{proof}
Finally, utilizing Lemmas \ref{lem:one-dim-uni3} and \ref{lem:one-dim-uni4}, we can prove the correctness of Algorithm \ref{alg:one-dim-uni} and get a better upper bound on $T_{0}(\epsilon,\delta,\mathcal{MC})$.
\begin{proof}{Proof of Theorem \ref{thm:one-dim-uni}.}
Denote $T_{max}:=N$. We use the induction method to prove that, for any iteration $k$, the two events
\begin{itemize}
    \item $\min_{x\in\mathcal{S}_k}~f(x) \leq \min_{x\in\mathcal{X}}~f(x) + \epsilon / 2$.
    \item $\min_{x\in\{\revise{x_{L_k}},\revise{x_{L_k}}+1,\dots,\revise{x_{U_k}}\}}~f(x) = \min_{x\in\mathcal{X}}~f(x)$.
\end{itemize}
happen jointly with probability at least $1-(k-1)\delta/T_{max}$. When $k=1$, we know $\mathcal{S}_1=\mathcal{X}$ and $\revise{x_{L_1}}=1,\revise{x_{U_1}}=N$. Hence, the two events happen with probability $1$. Suppose the claim is true for the first $k-1$ iterations. We assume the two events happen for the $(k-1)$-th iteration and consider conditional probabilities in the following proof. We discuss by two different cases.

\paragraph{Case I.} Type-I Operation is implemented in the $(k-1)$-th iteration. In this case, there exists $x,y\in\mathcal{S}_{k-1}$ such that $\hat{F}_{n_x}(x) + h_{x} \leq \hat{F}_{n_y}(y) - h_{y}$. By the definition of confidence intervals, we know $f(x) \leq f(y)$ holds with probability at least $1-\delta/T_{max}$. We assume event $f(x) \leq f(y)$ happens jointly with the claim for the $(k-1)$-th iteration, which has probability at least $1-(k-2)\delta/T_{max}-\delta/T_{max} = 1-(k-1)\delta/T_{max}$. If $x < y$, then using the convexity of $f(x)$, we know
\[ f(z) \geq f(y) \geq f(x) ,\quad\forall z \in[N]\quad\mathrm{s.t.}~z\geq y, \]
which means all discarded points have function values at least $f(x)$. Hence, the minimums in the claim are not changed, i.e., we have
\[ \min_{x\in\mathcal{S}_k}~f(x) = \min_{x\in\mathcal{S}_{k-1}}~f(x) \leq \min_{x\in\mathcal{X}}~f(x) + \epsilon / 2 \]
and
\[ \min_{x\in\{\revise{x_{L_k}},\revise{x_{L_k}}+1,\dots,\revise{x_{U_k}}\}}~f(x) = \min_{x\in\{\revise{x_{L_{k-1}}},\revise{x_{L_{k-1}}}+1,\dots, \revise{x_{U_{k-1}}}\}}~f(x) = \min_{x\in\mathcal{X}}~f(x). \]
The two events happen with probability at least $1-(k-1)\delta/T_{max}$. If $y < x$, the proof is the same and therefore the claim holds for the $k$-th iteration.

\paragraph{Case II.} Type-II Operation is implemented in the $(k-1)$-th iteration. Since $\revise{x_{L_{k-1}}}$ and $\revise{x_{U_{k-1}}}$ are not changed, the first event happens for the $k$-th iteration. Hence, we only need to verify that the second event happens with high probability. Let $x^*$ and $x^{**}$ be a minimizer and a maximizer of $f(x)$ on $\mathcal{S}_{k-1}$, respectively. By the condition of Type-II Operations, we know
\[ | \hat{F}_{n}~(x^*) - \hat{F}_n(x^{**}) | \leq h_{x^*} + h_{x^{**}} \]
and
\[ h_{x} \leq |\mathcal{S}_{k-1}|/80\cdot \epsilon ,\quad\forall x\in\mathcal{S}_{k-1}. \]
By the definition of confidence intervals, it holds
\[ | f(x^*) - \hat{F}_{n}~(x^*) | \leq h_{x^*},\quad | f(x^{**}) - \hat{F}_{n}~(x^{**}) | \leq h_{x^{**}} \]
with probability at least $1-\delta/T_{max}$. Under the above event, we have
\begin{align}\label{eqn:one-dim-cor-7} |f(x^*) - f(x^{**})| &\leq | f(x^*) - \hat{F}_{n}~(x^*) | + | \hat{F}_{n}~(x^*) - \hat{F}_n(x^{**}) | + | f(x^{**}) - \hat{F}_{n}~(x^{**}) |\\
\nonumber&\leq 2(h_{x^*}) + h_{x^{**}} \leq M/20\cdot\epsilon. \end{align}
We assume the above event happens jointly with with the claim for the $(k-1)$
-th iteration, which has probability at least $1-(k-1)\delta/T_{max}$. By the induction assumption, the original problem \eqref{eqn:obj} is equivalent to
\[ \min_{x\in\{\revise{x_{L_{k-1}}},\revise{x_{L_{k-1}}}+1,\dots,\revise{x_{U_{k-1}}}\}}~f(x). \]
Moreover, if we denote $\tilde{f}$ as the linear interpolation of $f(x)$ defined in \eqref{eqn:linear-int}, then the above problem is equivalent to
\begin{align}\label{eqn:one-dim-cor-8} \min_{x\in\{\revise{x_{L_{k-1}}},\revise{x_{L_{k-1}}}+1,\dots,\revise{x_{U_{k-1}}}\}}~f(x) = \min_{x\in[\revise{x_{L_{k-1}}},\revise{x_{U_{k-1}}}]}~\tilde{f}(x). \end{align}
We define the constant $\tilde{M}:= (\revise{x_{U_{k-1}}}-\revise{x_{L_{k-1}}})/d_{k-1}+1$ and the linear transformation
\[ T(x) := \revise{x_{L_{k-1}}} + d_{k-1}(x-1),\quad\forall x \in[1,\tilde{M}]. \]
The inverse image $T^{-1}\left([\revise{x_{L_{k-1}}},\revise{x_{U_{k-1}}}]\right)$ is $[1,\tilde{M}]$. Defining the composite function
\[ \tilde{g}(x) := \tilde{f}(T(x)),\quad\forall x\in[1,\tilde{M}], \]
we know that the problem \eqref{eqn:one-dim-cor-8} is equivalent to
\begin{align}\label{eqn:one-dim-cor-9} \min_{x\in[1,\tilde{M}]}~\tilde{g}(x). \end{align}
The inverse image $T^{-1}(\mathcal{S}_{k-1})$ is $[M]$, where $M:=|\mathcal{S}_{k-1}|$ is the number of points in $\mathcal{S}_{k-1}$. Lemma \ref{lem:one-dim-uni4} implies that $a:=\tilde{M}-M\in[0,1]$. Recalling inequality \eqref{eqn:one-dim-cor-7}, we get
\[ \max_{x\in[M]}~\tilde{g}(x) - \min_{x\in[M]}~\tilde{g}(x) \leq M/20\cdot\epsilon. \]
Now we can apply Lemma \ref{lem:one-dim-uni3} to get
\[ \min_{y\in [M']}~\tilde{g}(2y-1) - \min_{x\in[1,M+a]}~\tilde{g}(x) = \min_{y\in [M']}~\tilde{g}(2y-1) - \min_{x\in[1,\tilde{M}]}~\tilde{g}(x) \leq \epsilon/2, \]
where $M' := \lceil M/2\rceil$. Since problem \eqref{eqn:one-dim-cor-9} is equivalent to problem \eqref{eqn:one-dim-cor-8} and further equivalent to problem \eqref{eqn:obj}, it holds that
\[ \min_{y\in [M']}~\tilde{g}(2y-1) - \min_{x\in[1,\tilde{M}]}~\tilde{g}(x) = \min_{y\in [M']}~\tilde{g}(2y-1) - \min_{x\in\mathcal{X}}~f(x) \leq \epsilon/2. \]
By the definition of $\mathcal{S}_{k}$, we know $T(2[M'] - 1)$ is $\mathcal{S}_{k}$ and therefore
\[ \min_{y\in [M']}~\tilde{g}(2y-1) - \min_{x\in\mathcal{X}}~f(x) = \min_{x\in\mathcal{S}_k}~f(x) - \min_{x\in\mathcal{X}}~f(x) \leq \epsilon/2, \]
which implies that the second case happens for the $k$-th iteration with probability at least $1-(k-1)\delta/T_{max}$.

Combining the above two cases, we know the claim holds for all iterations. Suppose there are $T$ iterations in Algorithm \ref{alg:one-dim-uni}. Since each iteration will decrease the active set $\mathcal{S}$ by at least $1$, we get $T \leq N - 1$. Then after the $T$ iterations, we have
\begin{align}\label{eqn:one-dim-cor-10} \min_{x\in\mathcal{S}_{T+1}}~f(x) \leq \min_{x\in\mathcal{X}}~f(x) + \epsilon / 2 \end{align}
holds with probability at least $1-T\cdot\delta/T_{max} \geq 1-\delta + \delta/T_{max}$. For the sub-problem, using the same analysis as Theorem \ref{thm:one-dim}, the point returned by Algorithm \ref{alg:one-dim-uni} satisfies the $(\epsilon/2,\delta/T_{max})$-PGS guarantee. Combining with the relation \eqref{eqn:one-dim-cor-10}, we know the algorithm returns a solution satisfying the $(\epsilon,\delta)$-PGS guarantee. 
\hfill\Halmos\end{proof}

\subsection{Proof of Theorem \ref{thm:one-dim-low}}
\label{ec:one-dim-low}

\begin{proof}{Proof of Theorem \ref{thm:one-dim-low}.}
We construct the two models $M_1,M_2\in\mathcal{MC}$ as
\[ \nu_{1,x} := \mathcal{N}\left[c x,\sigma^2\right],\quad \nu_{2,x} := \mathcal{N}\left[ c\left(|x-2| + 2\right),\sigma^2\right],\quad\forall x\in\mathcal{X}. \]
Given a $[(c,\delta)\text{-PCS-IZ},\mathcal{MC}_c]$-algorithm, the algorithm returns point $1$ with probability at least $1-\delta$ when applied to model $M_1$, and returns point $2$ with probability at least $1-\delta$ when applied to model $M_2$. We choose $\mathcal{E}$ as the event that the algorithm returns point $1$ as the solution. Then, we know
\[ \mathbb{P}_{M_1}(\mathcal{E}) \geq 1-\delta,\quad \mathbb{P}_{M_2}(\mathcal{E}) \leq \delta. \]
Using the monotonicity of function $d(x,y)$, we get
\begin{align}\label{eqn:one-dim-low2} d(\mathbb{P}_{M_1}(\mathcal{E}),\mathbb{P}_{M_2}(\mathcal{E}))\geq d(1-\delta,\delta) \geq \log(1/2.4\delta). \end{align}
Since the distributions $\nu_{1,x}$ and $\nu_{2,x}$ are Gaussian with variance $\sigma^2$, the KL divergence can be calculated as
\[ \mathrm{KL}(\nu_{1,x},\nu_{2,x}) = \frac{\left[c x - c\left(|x-2| + 2\right)\right]^2}{2\sigma^2} = \begin{cases}
{2c^2}\sigma^{-2}&\text{if } x = 1\\
0&\text{otherwise}.
\end{cases} \]
Hence, the summation can be calculated as
\begin{align}\label{eqn:one-dim-low3} \sum_{x\in\mathcal{X}}~\mathbb{E}_{M_1}\left[N_x(\tau)\right]\mathrm{KL}(\nu_{1,x},\nu_{2,x}) = \frac{2c^2}{\sigma^2}\cdot\mathbb{E}_{M_1}\left[N_1(\tau)\right]. \end{align}
Substituting \eqref{eqn:one-dim-low2} and \eqref{eqn:one-dim-low3} into inequality \eqref{eqn:one-dim-low1}, we know
\[ \frac{2c^2}{\sigma^2}\cdot\mathbb{E}_{M_1}\left[N_1(\tau)\right] \geq \log(1/2.4\delta), \]
which implies that
\[ \mathbb{E}_{M_1}\left[\tau\right]\geq\mathbb{E}_{M_1}\left[N_1(\tau)\right]\geq \frac{\sigma^2}{2c^2}\cdot\log(\frac{1}{2.4\delta}) = \revise{\Theta\left[ \frac{1}{c^{2}}\log\left(\frac{1}{\delta}\right) \right].} \]
\hfill\Halmos\end{proof}

\section{Deterministic cutting-plane methods}
\label{ec:cuttingplane}

In this section, we give the pseudo-codes of Vaidya's cutting-plane method \citep{vaidya1996new} and the random walk-based cutting-plane method \citep{bertsimas2004solving} for the self-contained purpose.

\subsection{Vaidya's cutting-plane method}

We first give the pseudo-code for Vaidya's cutting-plane method \citep{vaidya1996new}. We note that examples of Newton-type methods include the original Newton method, quasi-Newton methods and the cubic-regularized Newton method.
\bigskip
\begin{breakablealgorithm}
\caption{Vaidya's cutting-plane method}
\label{alg:vaidya}
\begin{algorithmic}[1]
\Require{Model $\mathcal{X},f(x)$, optimality guarantee parameter $\epsilon$, Lipschitz constant $L$, $\mathcal{SO}$ oracle $\hat{g}$.}
\Ensure{An $\epsilon$-solution $x^*$ to problem \eqref{eqn:obj}.}
\State Set the initial polytope $P \leftarrow [1,N]^d$.
\State \revise{Set the constant $\rho \leftarrow 10^{-7}$. \Comment{Constant $\rho$ corresponds to $\epsilon$ in \citet{vaidya1996new}.}}
\State \revise{Set the number of iterations $T_{max}\leftarrow \lceil 2d / \rho \cdot \log[d N L / (\rho\epsilon)] \rceil$.}
\State Initialize the set of points used to query separation oracles $\mathcal{S} \leftarrow \emptyset$.
\State Initialize the volumetric center $z \leftarrow (N+1)/2 \cdot (1,1,\dots,1)^T$.
\For{$T=1,2,\dots,T_{max}$}
    \State Decide adding or removing a cutting plane by checking $\sigma_i(z)$ for $i\in[T]$ \citep{vaidya1996new}.
    \If{add a cutting plane}
        \State Evaluate the $\mathcal{SO}$ oracle $\hat{g}$ at $z$.
        \If{$\hat{g}=0$}
            \State Return $z$ as the approximate solution.
        \EndIf
        \State Add the current point $z$ to $\mathcal{S}$.
    \ElsIf{remove a cutting plane}
        \State Remove corresponding point $z$ from $\mathcal{S}$.
    \EndIf
    \State Update the approximate volumetric center $z$ by a Newton-type method.
\EndFor \Comment{There are at most $O(d)$ points in $\mathcal{S}$ by Vaidya's method.}
\State Return the solution $\hat{x}$ to problem $\min_{x\in\mathcal{S}}~ f(x)$.
\end{algorithmic}
\end{breakablealgorithm}
\bigskip

\subsection{Random walk-based cutting-plane method}

Next, we list the pseudo-code for the random walk-based cutting-plane method in \citet{bertsimas2004solving}.
\bigskip
\begin{breakablealgorithm}
\caption{Deterministic random walk-based cutting-plane method}
\label{alg:randomwalk}
\begin{algorithmic}[1]
\Require{Model $\mathcal{X},f(x)$, optimality guarantee parameter $\epsilon$, Lipschitz constant $L$, $\mathcal{SO}$ oracle $\hat{g}$.}
\Ensure{An $\epsilon$-solution $x^*$ to problem \eqref{eqn:obj}.}
\State Set the initial polytope $P \leftarrow [1,N]^d$.
\State Set the number of iterations $T_{max}\leftarrow O[d\log(d L N / \epsilon)]$.
\State Set the number of samples required to calculate the center $M\leftarrow O(d)$.
\State Initialize the set of points used to query separation oracles $\mathcal{S} \leftarrow \emptyset$.
\State Initialize the volumetric center $z \leftarrow (N+1)/2 \cdot (1,1,\dots,1)^T$.
\For{$T=1,2,\dots,T_{max}$}
    \State Evaluate the $\mathcal{SO}$ oracle $\hat{g}$ at $z$.
    \State Add the current point $z$ to $\mathcal{S}$.
    \State Add the cutting plane using $\hat{g}$ to $P$.
    \If{$P=\emptyset$} \Comment{This step requires solving a linear feasibility problem}
        \State \textbf{break}
    \EndIf
    \State Uniformly sample $M$ points from the new polytope $P$ via random walk.
    \State Update the approximate volumetric center $z$ to the average of all sampled points.
\EndFor 
\State Return the solution $\hat{x}$ to problem $\min_{x\in\mathcal{S}}~ f(x)$.
\end{algorithmic}
\end{breakablealgorithm}
\bigskip%

\section{Proofs in Section \ref{sec:multi-dim}}

\subsection{Proof of Lemma \ref{lem:multi-dim-uni}}
\label{ec:lem-multi-dim-uni}

\begin{proof}{Proof of Lemma \ref{lem:multi-dim-uni}.}

Let $k\in\{2,3,\dots,N-1\}$. By the definition of $f^{d-1}(x)$, there exists vectors $y_{k-1},y_{k+1}\in[N]^{d-1}$ such that
\[ f^{d-1}(k-1) = f(y_{k-1}, k-1),\quad f^{d-1}(k+1) = f(y_{k+1}, k+1). \]
By the $L^\natural$-convexity of $f(x)$, we have
\begin{align*} 
f^{d-1}(k-1) + f^{d-1}(k+1) &= f(y_{k-1}, k-1) + f(y_{k+1}, k+1)\\
&\geq f\left( \left\lceil \frac{y_{k-1} + y_{k+1}}{2} \right\rceil, k \right) + f\left( \left\lfloor \frac{y_{k-1} + y_{k+1}}{2} \right\rfloor, k \right)\\
&\geq 2\min_{y\in[N]^{d-1}}~f(y,k) = 2f^{d-1}(k),
\end{align*}
which means the discrete midpoint convexity holds at point $k$. Since we can choose $k$ arbitrarily, we know function $f^{d-1}(x)$ is convex on $[N]$.
\hfill\Halmos\end{proof}

\subsection{ Proof of Theorem \ref{thm:multi-dim-uni} }
\label{ec:multi-dim-uni}

\begin{proof}{Proof of Theorem \ref{thm:multi-dim-uni}.}
We first verify the correctness of Algorithm \ref{alg:multi-dim-uni}. The algorithm is the same as Algorithm \ref{alg:one-dim-uni} except the condition for implementing Type-II Operations. Hence, if we can prove that, when Type-II Operations are implemented, it holds
\begin{align}\label{eqn:multi-dim-uni-1} h \leq |\mathcal{S}|\cdot \epsilon/80, \end{align}
then the proof of Theorem \ref{thm:one-dim-uni} can be directly applied to this case. If the confidence interval is updated at the beginning of current iteration, then we have
\[ h = |\mathcal{S}|\cdot\epsilon / 160 < |\mathcal{S}|\cdot\epsilon / 80. \]
Otherwise, if the confidence interval is not updated in the current iteration. Then, we have $|\mathcal{S}| > N_{cur}/2$ and therefore
\[ h = N_{cur}\cdot \epsilon / 160 < 2|\mathcal{S}|\cdot\epsilon / 160 = |\mathcal{S}|\cdot\epsilon / 80. \]
Combining the two cases, we have inequality \eqref{eqn:multi-dim-uni-1} and the correctness of Algorithm \ref{alg:multi-dim-uni}.

Next, we estimate the simulation cost of Algorithm \ref{alg:multi-dim-uni}. Denote the active sets when we update the confidence interval as $\mathcal{S}_1,\dots,\mathcal{S}_m$, where $m\geq1$ is the number of times when the confidence interval is updated. Then, we know $|\mathcal{S}_1|=N$ and $|\mathcal{S}_m| \geq 3$. By the condition for updating the confidence interval, it holds
\[ | \mathcal{S}_{k+1} | \leq | \mathcal{S}_k | / 2,\quad\forall k\in[m-1], \]
which implies
\[ | \mathcal{S}_k | \geq 2^{m-k}| \mathcal{S}_m | \geq 3\cdot 2^{m-k},\quad\forall k\in[m]. \]
Since the algorithm $\mathcal{A}$ is sub-Gaussian with parameter $C$, for each $x\in\mathcal{S}_k$, the simulation cost for generating $\hat{f}^{d-1}(x)$ is at most
\[ \frac{2C}{h^2}\log(\frac{2T_{max}}{\delta}) = \frac{2C}{160^{-2}|\mathcal{S}_k|^2\epsilon^2}\log(\frac{2T_{max}}{\delta}) = |\mathcal{S}_k|^{-2}\cdot \frac{51200C}{\epsilon^2}\log(\frac{2T_{max}}{\delta}). \]
Hence, the total simulation cost for the $k$-th update of confidence intervals is at most
\[ |\mathcal{S}_k| \cdot |\mathcal{S}_k|^{-2}\cdot \frac{51200C}{\epsilon^2}\log(\frac{2T_{max}}{\delta}) = |\mathcal{S}_k|^{-1}\cdot \frac{51200C}{\epsilon^2}\log(\frac{2T_{max}}{\delta}) \leq 2^{k-m}/3 \cdot \frac{51200C}{\epsilon^2}\log(\frac{2T_{max}}{\delta}). \]
Summing over all iterations, we have the simulation cost of all iterations of Algorithm \ref{alg:multi-dim-uni} is at most
\[ \sum_{k=1}^m~2^{k-m}/3 \cdot \frac{51200C}{\epsilon^2}\log(\frac{2T_{max}}{\delta}) = \left( 2 - 2^{1-m} \right) \cdot \frac{51200C}{3\epsilon^2}\log(\frac{2T_{max}}{\delta}) < \frac{102400C}{3\epsilon^2}\log(\frac{2T_{max}}{\delta}). \]
Now we consider the simulation cost of the last subproblem. Since the algorithm \ref{alg:multi-dim-uni} is sub-Gaussian with parameter $C$, the simulation cost of the subproblem is at most
\[ 2 \cdot \frac{2C}{(\epsilon/4)^2}\log(\frac{2T_{max}}{\delta}) = \frac{64C}{\epsilon^2}\log(\frac{2T_{max}}{\delta}). \]
Hence, the total simulation cost of Algorithm \ref{alg:multi-dim-uni} is at most
\[ \frac{102400C}{3\epsilon^2}\log(\frac{2T_{max}}{\delta}) + \frac{64C}{\epsilon^2}\log(\frac{2T_{max}}{\delta}) < 17099 \cdot \frac{2C}{\epsilon^2}\log(\frac{2T_{max}}{\delta}). \]
When $\delta$ is small enough, we can choose $M=17100$ and the asymptotic simulation cost of Algorithm \ref{alg:multi-dim-uni} is at most
\[ \frac{2MC}{\epsilon^2}\log(\frac{2T_{max}}{\delta}), \]
which implies that Algorithm \ref{alg:multi-dim-uni} is sub-Gaussian with dimension $d$ and parameter $MC$.
\hfill\Halmos\end{proof}

\subsection{Proof of Lemma \ref{lem:weak-1}}
\label{ec:weak-1}

\begin{proof}{Proof of Lemma \ref{lem:weak-1}.}
By the assumption that $F(x,\xi_x)-f(x)$ is sub-Gaussian with parameter $\sigma^2$ for any $x$, we know that $\hat{g}_{\alpha_x(i)} - g_{\alpha_x(i)}$ is the difference of two independent sub-Gaussian random variables and therefore
\[ \hat{g}_{\alpha_x(i)} - {g}_{\alpha_x(i)} \sim \mathrm{subGaussian}\left(2\sigma^2\right) ,\quad \forall i \in [d], \]
where $g$ is the subgradient of $f(x)$ defined in \eqref{eqn:subgrad}. Then, using the properties of sub-Gaussian random variables, it holds that
\[ \hat{g}^n_{\alpha_x(i)} - {g}_{\alpha_x(i)} \sim \mathrm{subGaussian}\left(\frac{2\sigma^2}{n}\right) ,\quad \forall i \in [d]. \]
Recalling that components of $\hat{g}^n$ are mutually independent, we know
\[ \langle \hat{g}^n - g, y - x \rangle = \sum_i~(\hat{g}^n_{\alpha_x(i)} - {g}_{\alpha_x(i)}) \cdot (y-x)_{\alpha_x(i)} \sim \mathrm{subGaussian}\left( \frac{2\sigma^2}{n} \cdot \|y-x\|_2^2 \right). \]
Since $\|y-x\|_2^2 \leq dN^2$, we know
\[ \langle \hat{g}^n - g, y - x \rangle \sim \mathrm{subGaussian}\left( \frac{2dN^2\sigma^2}{n} \right). \]
By the Hoeffding bound, it holds
\[ \left| \langle \hat{g}^n - g, y - x \rangle \right| \leq \sqrt{\frac{4dN^2\sigma^2}{n} \log\left( \frac{2}{\delta} \right)} \]
with probability at least $1-\delta$. If we choose 
\[ n = \left\lceil\frac{4dN^2\sigma^2}{\epsilon^2}\log\left(\frac{2}{\delta}\right)\right\rceil \leq \frac{4dN^2\sigma^2}{\epsilon^2}\log\left(\frac{2}{\delta}\right) + 1, \]
it follows that
\begin{align}\label{eqn:weak-1} \left| \langle \hat{g}^n - g, y - x \rangle \right| \leq \epsilon. \end{align}
Since $f(x)$ is a convex function and $g$ is a subgradient at point $x$, we have $f(y) \geq f(x) + \langle g, y - x \rangle$ for all $y \in [1,N]^d$. Combining with inequality \eqref{eqn:weak-1} gives
\[ f(y) \geq f(x) + \langle \hat{g}^n, y - x \rangle + \langle g - \hat{g}^n, y - x \rangle \geq f(x) + \langle \hat{g}^n, y - x \rangle - \epsilon ,\quad \forall y \in [1,N]^d \]
holds with probability at least $1-\delta$. Then, considering the half space $H = \{ y: \langle \hat{g}^n , y - x \rangle \leq 0 \}$, it holds 
\[ f(y) \geq f(x) + \langle \hat{g}^n, y - x \rangle - \epsilon \geq f(x) - \epsilon ,\quad \forall y \in [1,N]^d \cap H^c \]
with the same probability. Taking the minimum over $[1,N]^d \cap H^c$, it follows that the averaged stochastic subgradient provides an $(\epsilon,\delta)$-$\mathcal{SO}$ oracle. Finally, the expected simulation cost of each oracle evaluation is at most
\[ d\cdot n \leq \frac{4d^2N^2\sigma^2}{\epsilon^2}\log\left(\frac{2}{\delta}\right) + d = \tilde{O}\left[ \frac{d^2N^2}{\epsilon^2}\log\left(\frac{1}{\delta}\right) \right]. \]
\hfill\Halmos\end{proof}

\subsection{Proof of Theorem \ref{thm:weak-1}}
\label{ec:thm-weak-1}

\revise{
Before we provide the proof of Theorem \ref{thm:weak-1}, we show the calculation of the number of iterations $T_{max}$. With a slight abuse of notations, we use the same notations as \citet{vaidya1996new} only in this calculation. Before the first iteration, we have the volumetric center as 
\[ \omega = \frac{N+1}{2} \cdot (1,\dots,1)^T \in \mathbb{R}^d.  \]
Therefore, we can calculate that
\[ H(\omega) = \frac{8}{(N-1)^2} \cdot I_{d}, \quad \rho^0 = \frac{d}{2}\log\left( \frac{8}{(N-1)^2} \right),  \]
where $I_{d}$ is the $d\times d$ identity matrix. By \citet{vaidya1996new}, the volume of the polytope at the beginning of the $t$-th iteration satisfies
\begin{align}\label{eqn:ec-101-1}
\nonumber\log(\pi^t) &\leq d\log\left(\frac{2d}{\rho}\right) - \rho^0 - \frac{\rho}{2} \cdot t = d\log\left(\frac{2d}{\rho}\right) - \frac{d}{2}\log\left( \frac{8}{(N-1)^2} \right) - \frac{\rho}{2} \cdot t\\
&\leq d\log\left(\frac{2d}{\rho}\right) + d\log\left( \frac{N}{2} \right) - \frac{\rho}{2} \cdot t = d\log\left(\frac{Nd}{\rho}\right) - \frac{\rho}{2} \cdot t.
\end{align}
The target set consists of points in the set 
\[ P(\epsilon) := \left\{ x\in[1,N]^d ~:~ \|x - x^*\|_1 \leq \epsilon / L \right\}, \]
where $x^*$ is the optimal solution of problem \eqref{eqn:obj} and $L$ is the Lipschitz constant of $f(\cdot)$. By a simple analysis, we know that the volume of $P(\epsilon)$ satisfies
\[ \mathrm{vol}( P(\epsilon) ) \geq \left( \frac{\epsilon}{L} \right)^d. \]
Therefore, we can terminate the algorithm when
\[ \log(\pi^{T_{max}}) \leq d\log\left( \frac{\epsilon}{L} \right). \]
Combining with inequality \eqref{eqn:ec-101-1}, we know
\[ T_{max} \geq \frac{2d}{\rho} \cdot \log\left( \frac{NdL}{\rho \epsilon} \right) \]
is sufficient for $\epsilon$-approximate solutions.
}

\begin{proof}{Proof of Theorem \ref{thm:weak-1}.}

We first prove the correctness of Algorithm \ref{alg:multi-dim-weak}. If $\hat{g} = 0$ for some iteration, the half space $H=\mathbb{R}^d$ and the definition of $(\epsilon/8,\delta/4)$-$\mathcal{SO}$ implies that
\[ f(y) \geq f(z) - \epsilon/8 ,\quad \forall y \in [1,N]^d \]
holds with probability at least $1-\delta/4$, where $z$ is the point that the separation oracle is called. Hence, we know $z$ is an $(\epsilon/8,\delta/4)$-PGS solution and obviously satisfies the $(\epsilon/2,\delta/2)$-PGS guarantee. Then, by Theorem \ref{thm:round}, the integral solution after the round process is an $(\epsilon,\delta)$-PGS solution.

In the following of the proof, we assume $\hat{g}\neq 0$ for all iterations. Let $x^* \in \mathcal{X}$ be a minimizer of problem \eqref{eqn:obj}. We consider the set
\[ Q := \left(x^* + \left[ -\frac{\epsilon}{8L}, \frac{\epsilon}{8L} \right]^d \right) \cap [1,N]^d. \]
We can verify that set $Q$ is not empty and has volume at least $({\epsilon}/(8L))^d$. Moreover, for any $x \in Q$, it holds
\[ f(x) \leq f(x^*) + L \| x - x^*\|_\infty \leq f(x^*) + \frac{\epsilon}{8}. \]
By the analysis in~\citet{vaidya1996new}, the volume of the polytope $P$ is smaller than $({\epsilon}/(8L))^d$ after
\[ T_{max} := O\left[ d\log\left( \frac{8dLN}{\epsilon} \right) \right] \]
iterations. Hence, after $T_{max}$ iterations, the volume of $P$ is smaller than the volume of $Q$ and it must hold $Q \backslash P \neq \emptyset$. Since $Q \subset [1,N]^d$, the constraint $1\leq x_i \leq N$ is not violated for all $i\in[d]$. Thus, if we choose $x \in Q \backslash P$, there exists a cutting plane $-\hat{g}^T y \geq \beta $ in $P$ such that
\[ -\hat{g}^T x < \beta \leq -\hat{g}^T z, \]
where $z$ is the point that the $(\epsilon/8,\delta/4)$-$\mathcal{SO}$ oracle $\hat{g}$ is evaluated and $\beta$ is the value chosen by Vaidya's method. This implies that $x$ is not in the half space
\[ H:=\{ y: \hat{g}^Ty \leq \hat{g}^Tz \}. \]
Then, by the definition of $(\epsilon/8,\delta/4)$-$\mathcal{SO}$ oracle and the claim that $x\in [1,N]^d \cap H^c$, we know
\[ f(x) \geq f(z) - \epsilon / 8 \]
holds with probability at least $1-\delta/4$. On the other hand, the condition $x\in P$ leads to
\[ f(x) \leq f(x^*) + \epsilon / 8. \]
Combining the last two inequalities gives that
\[ \min_{y\in\mathcal{S}}~f(y) \leq f(z) \leq f(x^*) + \epsilon / 4 \]
holds with probability at least $1-\delta/4$. Hence, the $(\epsilon/4,\delta/4)$-PGS solution $\hat{x}$ of problem $\min_{y\in\mathcal{S}}~f(y)$ satisfies
\[ f(\hat{x}) \leq f(x^*) + \epsilon / 2 \]
with probability at least $1-\delta/2$. Equivalently, the solution $\hat{x}$ is an $(\epsilon/2,\delta/2)$-PGS solution. Using Theorem \ref{thm:round}, the integral solution returned by Algorithm \ref{alg:multi-dim-weak} is an $(\epsilon,\delta)$-PGS solution. 

Now, we estimate the expected simulation cost of Algorithm \ref{alg:multi-dim-weak}. By Lemma \ref{lem:weak-1}, the simulation cost of each $(\epsilon/8,\delta/4)$-$\mathcal{SO}$ oracle is at most
\[ {O}\left[\frac{d^2N^2}{\epsilon^2}\log\left(\frac{1}{\delta}\right) + d\right]. \]
Since at most one separation oracle is evaluated in each iteration, the total simulation cost of $T_{max}$ iterations is at most
\[ {O}\left[ \left(\frac{d^2N^2}{\epsilon^2}\log\left(\frac{1}{\delta}\right) + d \right) \cdot d\log\left( \frac{8dLN}{\epsilon} \right) \right] = \tilde{O}\left[ \frac{d^3N^2}{\epsilon^2}\log(\frac{dLN}{\epsilon})\log\left(\frac{1}{\delta}\right) \right]. \]
By the property of Vaidya's method, there are $O(d)$ cutting planes in the polytope $P$. Then, using the same analysis as~\citet{zhang2020discrete}, the expected simulation cost of finding an $(\epsilon/4,\delta/4)$-PGS solution of the sub-problem $\min_{y\in\mathcal{S}}~f(y)$ is at most
\[ \tilde{O}\left[ \frac{d^2}{\epsilon^2}\log\left(\frac{1}{\delta}\right) \right]. \]
We note that the evaluation of $(\epsilon/8,\delta/4)$-$\mathcal{SO}$ oracles at points in $\mathcal{S}$ provides enough simulations for the sub-problem and therefore the simulation cost of this part can be avoided. Finally, the expected simulation cost of the rounding process is bounded by
\[ \tilde{O}\left[ \frac{d}{\epsilon^2}\log\left(\frac{1}{\delta}\right) \right]. \]
Combining the three parts, the total expected simulation cost of Algorithm \ref{alg:multi-dim-weak} is at most
\[ \tilde{O}\left[ \frac{d^3N^2}{\epsilon^2}\log(\frac{dLN}{\epsilon})\log\left(\frac{1}{\delta}\right) \right]. \]
\hfill\Halmos\end{proof}

\subsection{Proof of Theorem \ref{thm:strong-1}}
\label{ec:thm-strong-1}

\begin{proof}{Proof of Theorem \ref{thm:strong-1}.}
We first verify the correctness of Algorithm \ref{alg:multi-dim-strong}. If the optimal solution has been removed during the dimension reduction process, we claim that the optimal solutions are removed from the search set by some cutting plane. This is because the dimension reduction steps will not remove integral points from the current search set~\citep{jiang2020minimizing}. Then, by the same proof as Theorem \ref{thm:weak-1}, it holds
\begin{align}\label{eqn:strong-1} \min_{x\in\mathcal{S}}~f(x) \leq \min_{x\in\mathcal{X}}~f(x) + \epsilon / 4 \end{align}
with probability at least $1-\delta/4$. Otherwise if the optimal solution has not been removed from the search set throughout the dimension reduction process, we know the last one-dimensional problem contains the optimal solution. Hence, the $(\epsilon/4,\delta/4)$-PGS solution to the one-dimensional problem is also an $(\epsilon/4,\delta/4)$-PGS solution to the original problem. Since the PGS solution is also added to the set $\mathcal{S}$, we also have relation \eqref{eqn:strong-1} holds with probability at least $1-\delta/4$. Then, the $(\epsilon/4,\delta/4)$-PGS solution $\bar{x}$ to problem $\min_{x\in\mathcal{S}}f(x)$ satisfies
\[ f(\bar{x}) \leq \min_{x\in\mathcal{X}}~f(x) + \epsilon / 2 \]
with probability at least $1-\delta/2$, or equivalently $\bar{x}$ is an $(\epsilon/2,\delta/2)$-PGS solution to problem \eqref{eqn:obj}. Using the results of Theorem \ref{thm:round}, the solution returned by Algorithm \ref{alg:multi-dim-strong} is an $(\epsilon,\delta)$-PGS solution.

Next, we estimate the expected simulation cost of Algorithm \ref{alg:multi-dim-strong}. By the results in \citet{jiang2020minimizing}, $(\epsilon/4,\delta/4)$-$\mathcal{SO}$ oracles are called at most $O[d(d+\log(N))]$ times. Hence, the size of $\mathcal{S}$ is at most $O[d(d+\log(N))]$. By the estimates in Lemma \ref{lem:weak-1}, the total simulation cost of the dimension reduction process is at most
\[ O\left[ \frac{d^3N^2(d+\log(N))}{\epsilon^2}\log\left(\frac{1}{\delta}\right) + d^2(d+\log(N)) \right] = \tilde{O}\left[ \frac{d^3N^2(d+\log(N))}{\epsilon^2}\log\left(\frac{1}{\delta}\right) \right]. \]
Moreover, the one-dimensional convex problem has at most $N$ feasible points and Theorem \ref{thm:one-dim-uni} implies that the expected simulation cost for this problem is at most
\[ \tilde{O}\left[\frac{1}{\epsilon^2}\log\left(\frac{1}{\delta}\right)\right]. \]
Since the size of $\mathcal{S}$ is at most $O[d(d+\log(N))]$, the sub-problem for the set $\mathcal{S}$ takes at most
\[ O\left[ \frac{d^2(d+\log(N))}{\epsilon^2}\log\left(\frac{1}{\delta}\right) + d^2(d+\log(N)) \right] = \tilde{O}\left[ \frac{d^2(d+\log(N))}{\epsilon^2}\log\left(\frac{1}{\delta}\right) \right] \]
simulation runs. Finally, Theorem \ref{thm:round} shows that the expected simulation cost of the rounding process is at most
\[ \tilde{O}\left[ \frac{d}{\epsilon^2}\log\left(\frac{1}{\delta}\right) \right]. \]
In summary, the total expected simulation cost of Algorithm \ref{alg:multi-dim-strong} is at most
\[ \tilde{O}\left[ \frac{d^3N^2(d+\log(N))}{\epsilon^2}\log\left(\frac{1}{\delta}\right) \right]. \]
\hfill\Halmos\end{proof}

\section{Proofs in Section \ref{sec:var}}

\subsection{Proof of Theorem \ref{thm:ada-1}}
\label{ec:thm-ada-1}

We first prove that $\hat{\sigma}$ serves as an upper bound on the sub-Gaussian parameter. The proof is based on the property that the lower tail of a squared sub-Gaussian random variable is sub-Gaussian.
\begin{lemma}
\label{lem:ada-1}
Let $\delta\in(0,1]$ be the failing probability. The parameter estimator $\hat{\sigma}^2$ in Definition \ref{def:ada} satisfies
\[ \mathbb{P}\left( \hat{\sigma}^2 \leq \sigma_x^2 \right) \leq \delta / 2.  \]
\end{lemma}

\begin{proof}{Proof of Lemma \ref{lem:ada-1}.}
By the definitions of $\hat{\mathrm{Var}}$ and $\hat{\sigma}^2$, we only need to prove that
\begin{align}\label{eqn:ada-3}
\mathbb{P}\left( \frac{1}{n} \sum_{i=1}^n \left[F(x, \xi_{2i-1}) - F(x,\xi_{2i})\right]^2 \leq \kappa\sigma_x^2 \right) \leq \delta / 2.
\end{align}
By the independence between $\xi_{2i-1}$ and $\xi_{2i}$, the random variable $F_i := F(x, \xi_{2i-1}) - F(x,\xi_{2i})$ is zero-mean and sub-Gaussian with parameter $2\sigma_x^2$ for all $i\in[n]$. Using the fact that $-F_i^2\leq 0$ almost surely and the one-sided Bernstein's inequality, we have
\[ \mathbb{P}\left[ \frac{1}{n} \sum_{i=1}^n \left(- F_i^2 + \mathbb{E}(F_i^2)\right) \geq \kappa\sigma_x^2 \right] \leq \exp\left[ -\frac{n\kappa^2\sigma_x^4}{2/n \sum_{i=1}^n\mathbb{E}(F_i^4) } \right]. \]
Since $\{F_i,i\in[n]\}$ are i.i.d. and zero-mean random variables, the above inequality is equivalent to
\[ \mathbb{P}\left[ \frac{1}{n} \sum_{i=1}^n F_i^2 - \mathrm{Var}(F_1) \leq -\kappa\sigma_x^2 \right] \leq \exp\left[ -\frac{n\kappa^2\sigma_x^4}{2 \mathbb{E}(F_1^4) } \right]. \]
Now, recalling the assumption in \eqref{eqn:ada-1} and $\mathrm{Var}(F_1) = 2\mathrm{Var}(F(x,\xi_1))$, we get
\begin{align}\label{eqn:ada-2}
\mathbb{P}\left[ \frac{1}{n} \sum_{i=1}^n F_i^2 - 2\kappa \sigma_x^2 \leq -\kappa\sigma_x^2 \right] \leq \mathbb{P}\left[ \frac{1}{n} \sum_{i=1}^n F_i^2 - \mathrm{Var}(F_1) \leq -\kappa\sigma_x^2 \right] \leq \exp\left[ -\frac{n\kappa^2\sigma_x^4}{2 \mathbb{E}(F_1^4) } \right].
\end{align}
To estimate the fourth moment of $F_1$, we calculate that
\begin{align*}
\mathbb{E}(F_1^4) &= \int_0^\infty t \mathbb{P}(F_1^4 \geq t)~dt = \int_0^\infty 4s^3 \mathbb{P}(F_1^4 \geq s^4)~ds = \int_0^\infty 4s^3 \mathbb{P}(|F_1| \geq s)~ds\\
&\leq \int_0^\infty 4s^3\cdot 2\exp[-s^2/(8\sigma_x^2)] ~ds = 128\sigma_x^4,
\end{align*}
where the second equality is from the substitution $t=s^4$ and the last inequality is from the fact that $F_1$ is $2\sigma_x^2$-sub-Gaussian. Substituting into inequality \eqref{eqn:ada-2}, we get
\[ \mathbb{P}\left[ \frac{1}{n} \sum_{i=1}^n F_i^2  \leq \kappa\sigma_x^2 \right] \leq \exp\left( -\frac{n\kappa^2\sigma_x^4}{256 \sigma_x^4 } \right) = \exp\left( -\frac{n\kappa^2}{256} \right) \leq \frac{\delta}{2}, \]
where the last inequality is from the choice of $n$. The above inequality is equivalent to inequality \eqref{eqn:ada-3} and the proof is done.
\hfill\Halmos\end{proof}

With the help of Lemma \ref{lem:ada-1}, we now prove the theorem.
\begin{proof}{Proof of Theorem \ref{thm:ada-1}.}
By Lemma \ref{lem:ada-1}, we know that the event $\mathcal{E} := \{\sigma_x^2\leq \hat{\sigma}^2\}$ happens with probability at least $1-\delta/2$. By the Hoeffding's inequality and the definitions of $n$ and $m$, we have
\begin{align*} 
\mathbb{P}\left[ |\hat{F}(x; \delta) - f(x)| \geq \epsilon, \mathcal{E} ~|~ \hat{\sigma}^2 \right] &\leq \exp\left[ - \frac{m\epsilon^2}{2\sigma_x^2} \right] \leq \exp\left[ - \frac{2\epsilon^{-2}\hat{\sigma}^2\cdot\log(2/\delta)\epsilon^2}{2\sigma_x^2} \right]\\
&\leq \exp\left[ - \log(2/\delta)\epsilon^2 \right] = \frac{\delta}{2}.
\end{align*}
Taking expectation over $\hat{\sigma}^2$ leads to
\[ \mathbb{P}\left[ |\hat{F}(x; \delta) - f(x)| \geq \epsilon, \mathcal{E} \right] \leq \frac{\delta}{2}. \]
Therefore, we get
\begin{align*} 
\mathbb{P}\left[ |\hat{F}(x; \delta) - f(x)| \geq \epsilon \right] &= \mathbb{P}\left[ |\hat{F}(x; \delta) - f(x)| \geq \epsilon, \mathcal{E} \right] + \mathbb{P}\left[ |\hat{F}(x; \delta) - f(x)| \geq \epsilon, \mathcal{E}^c \right]\\
&\leq \mathbb{P}\left[ |\hat{F}(x; \delta) - f(x)| \geq \epsilon, \mathcal{E} \right] + \mathbb{P}\left[ \mathcal{E}^c \right] \leq \delta,
\end{align*}
where $\mathcal{E}^c$ is the complementary set of $\mathcal{E}$. 

The estimation of the expected simulation cost is from the fact that $\hat{\mathrm{Var}}$ is an unbiased estimator and the bound
\[ m = \max\{\lceil 2\epsilon^{-2}\hat{\sigma}^2\log(2/\delta) \rceil, \lceil 512\kappa^{-2}\log(2/\delta) \rceil\} \leq 2\epsilon^{-2}\hat{\sigma}^2\log(2/\delta) \rceil + \lceil 512\kappa^{-2}\log(2/\delta) \rceil. \]
\hfill\Halmos\end{proof}


\end{document}